\documentclass{article}

\usepackage{microtype}
\usepackage{graphicx}
\usepackage{subcaption}
\usepackage{booktabs} 

\usepackage{hyperref}

\usepackage[accepted]{icml2026}

\usepackage{amsmath}
\usepackage{amssymb}
\usepackage{mathtools}
\usepackage{amsthm}

\usepackage[capitalize,noabbrev]{cleveref}

\usepackage{bm}
\usepackage{enumitem}
\usepackage{multirow}
\usepackage{array}
\usepackage[export]{adjustbox}

\usepackage{tabularx}
\usepackage{array}
\newcolumntype{Y}{>{\centering\arraybackslash}X}

\newcommand{\ii}{_{\mathcal{I}}}
\newcommand{\st}{_{\mathrm{st}}}
\newcommand{\op}{_{\mathrm{op}}}
\newcommand{\bii}{\mathcal{I}}
\newcommand{\bst}{\mathrm{st}}
\newcommand{\bop}{\mathrm{op}}

\newcommand{\bu}{\bm{u}}
\newcommand{\bv}{\bm{v}}
\newcommand{\bw}{\bm{w}}
\newcommand{\bx}{\bm{x}}
\newcommand{\by}{\bm{y}}
\newcommand{\bz}{\bm{z}}

\newcommand{\br}{\bm{r}}
\newcommand{\bh}{\bm{h}}

\newcommand{\bA}{\bm{A}}

\newcommand{\bC}{\bm{C}}

\newcommand{\bE}{\bm{E}}

\newcommand{\bR}{\bm{R}}

\newcommand{\bU}{\bm{U}}

\newcommand{\bX}{\bm{X}}
\newcommand{\bY}{\bm{Y}}

\newcommand{\bDelta}{\bm{\Delta}}

\newcommand{\bSigma}{\bm{\Sigma}}

\DeclareMathOperator{\argmin}{argmin}
\DeclareMathOperator{\tr}{tr}

\theoremstyle{plain}
\newtheorem{theorem}{Theorem}[section]
\newtheorem{proposition}[theorem]{Proposition}
\newtheorem{lemma}[theorem]{Lemma}
\newtheorem{corollary}[theorem]{Corollary}
\theoremstyle{definition}

\theoremstyle{remark}
\newtheorem{remark}[theorem]{Remark}
\newtheorem{fact}[theorem]{Fact}

\icmltitlerunning{Dual Quaternion SE(3) Synchronization with Recovery Guarantees}

\begin{document}

\twocolumn[
  \icmltitle{
  Dual Quaternion SE(3) Synchronization with Recovery Guarantees
  }



  \icmlsetsymbol{equal}{*}

  \begin{icmlauthorlist}
    \icmlauthor{Jianing Zhao}{equal,seem}
    \icmlauthor{Linglingzhi Zhu}{equal,isye}
    \icmlauthor{Anthony Man-Cho So}{seem}
  \end{icmlauthorlist}

\icmlaffiliation{seem}{Department of Systems Engineering and Engineering Management, The Chinese University of Hong Kong, Shatin, NT, Hong Kong}
\icmlaffiliation{isye}{H. Milton Stewart School of Industrial and Systems Engineering, Georgia Institute of Technology, Atlanta, GA, USA}

  \icmlcorrespondingauthor{Linglingzhi Zhu}{llzzhu@gatech.edu}

  \icmlkeywords{SE(3) Synchronization, Dual Quaternions, Spectral Methods}

  \vskip 0.3in
]



\printAffiliationsAndNotice{\icmlEqualContribution}

\begin{abstract}

Synchronization over the special Euclidean group $\mathrm{SE}(3)$ aims to recover absolute poses from noisy pairwise relative transformations and is a core primitive in robotics and 3D vision. 
Standard approaches often require multi-step heuristic procedures to recover valid poses, which are difficult to analyze and typically lack theoretical guarantees.
This paper adopts a dual quaternion representation and formulates $\mathrm{SE}(3)$ synchronization directly over the unit dual quaternion. 
A two-stage algorithm is developed: A spectral initializer computed via the power method on a Hermitian dual quaternion measurement matrix, followed by a dual quaternion generalized power method (DQGPM) that enforces feasibility through per-iteration projection. 
The estimation error bounds are established for spectral estimators, and DQGPM is shown to admit a finite-iteration error bound and achieves linear error contraction up to an explicit noise-dependent threshold. 
Experiments on synthetic benchmarks and real-world multi-scan point-set registration demonstrate that the proposed pipeline improves both accuracy and efficiency over representative matrix-based methods.

\end{abstract}

\section{Introduction}

Global coordinate alignment, the task of unifying a network of local reference frames into a single and consistent global coordinate system, lies at the heart of 3D spatial perception. 
Whether reconstructing a trajectory in robotics or merging partial 3D scans in computer vision, the fundamental challenge is to recover the absolute poses (rotations and translations) of a set of nodes given only noisy measurements of their relative transformations. 
Mathematically formalized as synchronization over the special Euclidean group $\mathrm{SE}(3)$, this problem serves as the foundation for diverse applications, including simultaneous localization and mapping (SLAM) \citep{liu2012convex, rosen2019se, fan2022generalized}, multiple point-set registration \citep{fusiello2002model,sharp2004multiview,arrigoni2016global}, and structure from motion (SfM) \citep{govindu2004lie,martinec2007robust}.
A prime example is multiple point-set registration, where the objective is to align a collection of 3D scans into a unified coordinate system. 
While early point-based approaches \citep{benjemaa1998solution, zhou2016fast, han2019real} attempted to optimize point-to-point correspondences directly, they often suffer from prohibitive computational costs and susceptibility to local minima when initializations are poor. 
Consequently, modern pipelines favor frame-space methods \citep{chaudhury2015global, arrigoni2016global}. 
These approaches decouple the problem into two steps: first, estimating pairwise relative transformations (e.g., via Iterative Closest Point \citep{besl1992method}); second, globally synchronizing these noisy measurements to recover absolute poses. 
This abstraction effectively casts the registration task as an $\mathrm{SE}(3)$ synchronization problem \citep{jiang2013global, ozyesil2015robust}.

Consequently, the core task reduces to recovering the rigid motions (rotations and translations) that best satisfy a network of noisy relative constraints. Standard approaches typically represent $\mathrm{SE}(3)$ elements using matrix embeddings:
\begin{equation*}\label{formula:matrix se3}
   \begin{pmatrix}
 \bR & \bm{t} \\ 
 \mathbf{0}^\top & 1
\end{pmatrix}\in \mathbb{R}^{4\times 4},
\end{equation*}
where $\bR \in \mathrm{SO}(3)$ and $\bm{t} \in \mathbb{R}^3$. 
To address the non-convexity of the rank and orthogonality constraints,  spectral relaxation \citep{arie2012global, arrigoni2016spectral} and semidefinite relaxation (SDR) \citep{liu2012convex, chaudhury2015global, rosen2019se} are widely employed. Additionally, Lie-algebraic averaging techniques \citep{govindu2004lie} and Riemannian optimization methods \citep{tron2014statistical} have also been applied to recover rigid motions.

\begin{table*}[t]
\centering
\caption{Rounding and Eigenspace--Synchronization Gap Across Groups
    }
\small
\renewcommand{\arraystretch}{1.75} 
\setlength{\tabcolsep}{2pt}        
\begin{tabular}{
>{\centering\arraybackslash}p{0.95cm}
>{\centering\arraybackslash}p{0.42\textwidth}
>{\centering\arraybackslash}p{0.46\textwidth}}
\toprule
\textbf{Group} & \textbf{Rounding / Projection} & \textbf{Eigenspace--Sync Gap}$^*$ \\
\midrule
$\mathrm{SO}(2)$
& $z_i\leftarrow z_i/|z_i|$
& $\mathrm{Iso}(E)\cong\mathrm{SO}(2)\Rightarrow$ \textbf{No gap} \\
$\mathrm{SO}(d)$
& $\bm{G}_i\leftarrow \bm{U}_i\,\mathrm{diag}(1,\ldots,1,\det(\bm{U}_i\bm{V}_i^\top))\,\bm{V}_i^\top$
& $\mathrm{Iso}(E)\cong\mathrm{O}(d)\Rightarrow$ \textbf{Moderate gap} \\
$\mathrm{SE}(3)$
& Multi-step heuristic
& $\mathrm{Iso}(E)$ cannot encode translations $\Rightarrow$ \textbf{Substantial mismatch} \\
\bottomrule
\end{tabular}
\\\vspace{1mm}{\raggedleft\footnotesize{$^*$ For the dominant eigenspace $E$, $\mathrm{Iso}(E)$ denotes the group of linear isometries.}\par}
\label{tab:group_comparison}
\end{table*}

However, matrix-based representations suffer from a fundamental limitation on the representation gap. 
Ideally, a group representation should align the isometry group of the nonzero eigenspace with the intrinsic global gauge symmetry of the synchronization problem.
This alignment is essentially perfect for $\mathrm{SO}(2)$ under the unit complex representation: the eigenspace isometry matches $\mathrm{SO}(2)$, making rounding a trivial normalization.
For $\mathrm{SO}(d)$, the eigenspace isometry expands to $\mathrm{O}(d)$, introducing only a mild gap (a reflection ambiguity) that is rigorously resolved via SVD projection. 
In contrast, as summarized in Table~\ref{tab:group_comparison}, the conventional $4\times4$ matrix representation of $\mathrm{SE}(3)$ breaks this favorable correspondence. While the relaxed eigenspace possesses a compact orthogonal geometry, $\mathrm{SE}(3)=\mathrm{SO}(3)\ltimes\mathbb{R}^3$ is inherently non-compact. Consequently, the relaxed solution often lies far from the manifold, creating a large eigenspace-sync gap. Rounding is no longer a benign projection that merely corrects noise; instead, it must artificially enforce a group structure absent from the eigenspace geometry. This necessitates complex, multi-step heuristic procedures that are often unstable \citep{arrigoni2016spectral, rosen2019se}. Although alternative representations exist, such as axis-angle pairs \citep{thunberg2014distributed,jin2020event}, augmented unit quaternions \citep{grisetti2010tutorial,chen2025non}, 
 and dual quaternions \cite{cheng2016dual,srivatsan2016estimating, hadi2024se}, they typically lack a unified algebraic framework that is  amenable to rigorous optimization guarantees. Concurrent with our work, \citet{zhao2026anchored} further studied matrix-based spectral estimators, introducing an additional anchoring step to resolve the right-orthogonal ambiguity of the eigenspace.

In this paper, we adopt the dual quaternion representation of $\mathrm{SE}(3)$.
Practically, this choice leads to a simpler and more stable rounding/projection step. Conceptually, it aligns the spectral ambiguity more closely with the intrinsic gauge symmetry of $\mathrm{SE}(3)$, thereby mitigating the eigenspace--synchronization mismatch induced by matrix embeddings.
Specifically, we represent each pose by a unit dual quaternion and work over the constraint set
$\mathrm{UDQ} := \{x\in\mathbb{DH} : |x|=1\}$.
Given noisy relative measurements, we estimate the unknown poses $\hat{\bx} = (\hat{x}_1,\ldots,\hat{x}_n) \in \mathrm{UDQ}^n$ by solving the least-squares problem
\begin{equation}\label{opt: LS1}
    \mathop{\arg \min}\limits_{x\in \mathrm{UDQ}^n} \sum_{1\leq i<j\leq n}\left|C_{ij}-x_ix_j^*\right|^2,
\end{equation}
where $\mathbb{DH}$ denotes the algebra of dual quaternions and $(\cdot)^*$ is the conjugate transpose, which coincides with the group inverse on $\mathrm{UDQ}$.
The measurement matrix $C\in\mathbb{DH}^{n\times n}$ is Hermitian with entries
\begin{equation}\label{eq:addi_model}
C_{ij}=\hat{x}_i\hat{x}_j^*+\Delta_{ij},\qquad 1\le i<j\le n,
\end{equation}
and $\bDelta\in\mathbb{DH}^{n\times n}$ collects the Hermitian observation noise.

To efficiently tackle the nonconvex optimization problem~\eqref{opt: LS1}, we build on the generalized power method (GPM) and develop a two-stage framework that combines a provably accurate spectral initializer with an iterative refinement scheme that preserves feasibility at every step. Variants of this spectral initializer with an iterative refinement strategy have proved effective for phase, orthogonal, and rotation group synchronization problems \cite{boumal2016nonconvex,ling2022improved,liu2023unified,zhu2023rotation}. 
However, establishing rigorous guarantees in the dual quaternion setting is substantially more challenging.
First, dual quaternions form a ring with zero divisors rather than a field, so division is not generally well defined. Second, as the natural modulus on dual quaternions is merely a dual-number valued seminorm, a bounded modulus does not imply bounded components. This allows translations to grow arbitrarily large despite a finite modulus, thereby posing challenges for stability and convergence analysis. Third, identifying a dominant eigenpair typically invokes the lexicographic order on dual numbers, whereas estimation and convergence are naturally quantified in Euclidean metrics. By  addressing these theoretical hurdles, we provide the first recovery guarantees for synchronization over the dual quaternion. Our main contributions are as follows:

\begin{itemize}
    \item 
    $\mathrm{SE}(3)$ synchronization is formulated as a least-squares problem over unit dual quaternions $\mathrm{UDQ}^n$ with a Hermitian dual quaternion measurement matrix.
    We formalize a normalization/projection map onto $\mathrm{UDQ}$, establish its stability (a Lipschitz-type bound), and provide a closed-form componentwise projection onto $\mathrm{UDQ}^n$.
    These results transform the rounding step, often a heuristic in matrix-based methods, into a well-posed and quantitatively controllable operation that underpins our recovery analysis.
\item The spectral initialization scheme is developed by computing the dominant eigenvector of a Hermitian dual quaternion measurement matrix via the power method, and then applying an elementwise projection to enforce feasibility in $\mathrm{UDQ}^n$. In contrast to heuristic initializations, we establish nonasymptotic estimation error guarantees for both the raw (generally infeasible) spectral estimator and its projected, feasible counterpart (Proposition~\ref{lem: init1 error} and Theorem~\ref{lem: init2 error}). These bounds ensure that the initializer lies within a controlled neighborhood of the ground truth under operator norm noise, placing it inside the basin of attraction needed for subsequent GPM refinement and thereby reducing the risk of convergence to undesirable local minima.
\item 
We propose a dual quaternion generalized power method (DQGPM) to refine the spectral initializer while strictly enforcing the unit dual quaternion constraints. Specifically, DQGPM applies an elementwise projection onto $\mathrm{UDQ}^n$ at every iteration, yielding a stop-anytime feasible iterate sequence. Under a mild noise regime and a sufficiently accurate initialization, we prove that the estimation error contracts at a linear rate up to an explicit error floor (Theorem~\ref{estimation perf}). To the best of our knowledge, this provides the first finite-iteration recovery guarantee for $\mathrm{SE}(3)$ synchronization, especially for the dual quaternion formulation. 
\end{itemize}

We evaluate our framework on both synthetic datasets and real-world multiple point-set registration tasks. Our results demonstrate that the proposed method significantly outperforms matrix-based approaches, including spectral relaxation and semidefinite relaxation, achieving higher recovery accuracy with greater computational efficiency.

\subsection{Notation}
Throughout the paper, the sets of dual numbers, quaternions, and dual 
quaternions are denoted by $\mathbb{D}$, $\mathbb{H}$, and $\mathbb{DH}$, 
respectively. For a dual number $a=a_{\bst}+a_{\mathcal{I}}\epsilon$ with $a_{\bst}\neq 0$, its inverse is
$a^{-1}=a_{\bst}^{-1}\bigl(1-a_{\mathcal{I}}a_{\bst}^{-1}\epsilon\bigr)$. The operator $(\cdot)^{*}$ denotes the (dual) quaternion conjugate when applied to a scalar element, and the conjugate transpose when applied to a vector or matrix. The scalar part of a quaternion $q$ is denoted by $\mathrm{sc}(q) = \tfrac{1}{2}(q + q^{*})$. For two $n$-dimensional (dual) quaternion vectors 
$\bm{x} = (x_1,\ldots,x_n)$ and $\bm{y} = (y_1,\ldots,y_n)$, 
their inner product is defined as $\bm{x}^{*}\bm{y} = \sum_{i=1}^{n} x_i^{*} y_i$.
The Frobenius norm of a (dual) quaternion matrix is denoted by $\|\cdot\|_{F}$, which reduces to the $2$-norm $\|\cdot\|_{2}$ for a (dual) quaternion vector. 
The operator norm of a (dual) quaternion matrix is denoted by $\|\cdot\|_{\mathrm{op}}$.
 The set of unit dual quaternions is denoted by
$
\mathrm{UDQ} := 
\{ q \in \mathbb{DH} : q q^* = 1 \}$.
We write 
$\mathrm{UDQ}^{n} := \{\bm{x}=(x_1,\dots,x_n) : x_i \in \mathrm{UDQ} \}$,
that is, the set of $n$-dimensional vectors whose entries are unit dual quaternions. For $\bm{x} \in \mathbb{DH}^{n}$, the element-wise 
projection onto $\mathrm{UDQ}^{n}$ is defined by
$
\Pi(\bm{x}) 
= 
\mathop{\argmin}_{\bm{u} \in \mathrm{UDQ}^{n}}
\|\bm{u} - \bm{x}\|_{2}^{2}.
$
For any $\bm{x},\bm{y} \in \mathbb{DH}^{n}$, the distance between them, defined 
up to a global unit dual quaternion alignment, is given by
$
\operatorname{d}(\bm{x},\bm{y})
=
\min_{z \in \mathrm{UDQ}}
\|\bm{x} - \bm{y}z\|_{2}.
$
We denote by $\operatorname{d}_{\bst}(\bx,\by)$ and $\operatorname{d}_{\mathcal{I}}(\bx,\by)$ the standard part and the imaginary part of $\operatorname{d}(\bx,\by)$, respectively.
For $z\in \mathbb{DH}$, we denote the normalization of $z$ onto $\mathrm{UDQ}$ as $\mathcal{N}(z)$. 
Given $\{a_k\}\subset\mathbb{D}$ and $\{c_k\}\subset\mathbb{R}$, we define $a_k=O_{\mathbb{D}}(c_k)$ iff $(a_k)_{\bst}=O(c_k)$ and $(a_k)_{\mathcal{I}}=O(c_k)$.

\section{Spectral Estimator}\label{sec:init}

The least-squares Problem~\eqref{opt: LS1} can be reformulated as an equivalent quadratic program with quadratic constraints (QPQC) over unit dual quaternions:
\begin{equation}\label{opt: LS2}
    \min_{\bx\in \mathrm{UDQ}^n}\ \left\|\bC-\bx\bx^*\right\|_F^2.
\end{equation}
This reformulation naturally leads to a spectral related problem based on the dominant eigenvectors of the dual quaternion Hermitian matrix~$\bC$.

\begin{proposition}[Equivalent QPQC Formulation]\label{prop:init to init2}
For a Hermitian dual quaternion matrix $\bC\in\mathbb{DH}^{n\times n}$, Problem~(\ref{opt: LS2}) is equivalent to
\begin{equation}\tag{P}\label{opt: LS3}
\mathop{\arg\max}_{\bx\in\mathrm{UDQ}^n}\ \bx^*\bC\bx,
\end{equation}
and their optimal objective values differ only by a multiplicative factor of $2$.
\end{proposition}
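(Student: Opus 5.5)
I plan to expand the squared Frobenius norm entrywise and show that, on $\mathrm{UDQ}^n$, every term except the cross term $\bx^*\bC\bx$ is a constant. For a dual quaternion matrix $\bm{M}$, $\|\bm{M}\|_F^2=\sum_{i,j}|M_{ij}|^2=\sum_{i,j}\mathrm{sc}(M_{ij}M_{ij}^*)$ is dual-number valued. Writing $\bm{M}=\bC-\bx\bx^*$, I expand $|C_{ij}-x_ix_j^*|^2 = |C_{ij}|^2 + |x_ix_j^*|^2 - 2\,\mathrm{sc}(C_{ij}^* x_i x_j^*)$, where the cross term uses $\mathrm{sc}(a b^*)=\mathrm{sc}(b a^*)$. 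This identity holds because $\mathrm{sc}(q^*)=\mathrm{sc}(q)$, and conjugation reverses products in $\mathbb{DH}$.

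Next I use unit modulus. For $x_i,x_j\in\mathrm{UDQ}$, $|x_ix_j^*|^2 = \mathrm{sc}(x_ix_j^*x_jx_i^*) = 1$, since $x_j^*x_j=1$ (for unit dual quaternions $qq^*=1$ implies $q^*q=1$, because $q^*$ is a two-sided inverse). Hence $\|\bx\bx^*\|_F^2=n^2$, which is constant. Likewise $\|\bC\|_F^2$ does not depend on $\bx$. For the cross term, $\sum_{i,j}\mathrm{sc}(C_{ij}^*x_ix_j^*)=\sum_{i,j}\mathrm{sc}(x_j^*C_{ij}^*x_i)$ by cyclicity of $\mathrm{sc}$, i.e. $\mathrm{sc}(ab)=\mathrm{sc}(ba)$, which holds in $\mathbb{H}$ and extends to $\mathbb{DH}$ componentwise in $\epsilon$. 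Hermiticity gives $C_{ij}^*=C_{ji}$, so this sum equals $\mathrm{sc}(\bx^*\bC\bx)$. Moreover $\bx^*\bC\bx$ is itself Hermitian, hence a dual number, so $\mathrm{sc}$ acts on it as the identity. Therefore
\[
\|\bC-\bx\bx^*\|_F^2=\|\bC\|_F^2+n^2-2\,\bx^*\bC\bx ,
\]
and minimizing the left side over $\mathrm{UDQ}^n$ is the same as maximizing $\bx^*\bC\bx$. Both problems thus have the same set of optimizers, in the lexicographic order on $\mathbb{D}$ that defines the argmin/argmax for dual-valued objectives. The map $t\mapsto c-2t$ is order-reversing for that order, so the optimizers coincide exactly.

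For the factor $2$ I relate \eqref{opt: LS1} to \eqref{opt: LS2}. Diagonal terms satisfy $|C_{ii}-x_ix_i^*|^2=|C_{ii}-1|^2$, which is constant; these vanish if $C_{ii}=1$, the natural convention for the model \eqref{eq:addi_model}. Off-diagonal terms pair up, since $|C_{ji}-x_jx_i^*|=|(C_{ij}-x_ix_j^*)^*|$. Hence $\|\bC-\bx\bx^*\|_F^2 = 2\sum_{i<j}|C_{ij}-x_ix_j^*|^2$ up to a constant, which yields the multiplicative factor $2$ between the objective values.

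The main obstacle is justifying the algebraic manipulations in $\mathbb{DH}$, which is noncommutative and has zero divisors, and making "equivalent" precise for dual-valued objectives. I will verify the cyclicity of $\mathrm{sc}$ and the identity $|q|^2=\mathrm{sc}(qq^*)$ on standard and infinitesimal parts separately; both reduce to the quaternion case by bilinearity in $\epsilon$. I will then note that the lexicographic order is preserved under adding a constant and reversed under multiplication by $-2$, since $-2$ is a negative real scalar.
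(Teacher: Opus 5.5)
Your proof is correct, but it reaches the key identity by a different and more elementary route than the paper.

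\textbf{How the paper does it.} The paper works at the matrix level. With $\bX=\bx\bx^*$ it writes $\|\bC-\bX\|_F^2=\tr(\bC^2)-\tr(\bC\bX+\bX\bC)+\tr(\bX^2)$. It disposes of the last term using $\bX^2=n\bX$ and $\tr(\bX)=n$. It then identifies $\tr(\bC\bX+\bX\bC)=2\,\bx^*\bC\bx$ by two further steps:
\begin{itemize}
\item the unitary decomposition of Hermitian dual quaternion matrices, $\bX=\bU\,\mathrm{diag}(n,0,\dots,0)\,\bU^*$ with first column $\bx/\sqrt n$;
\item invariance of the trace under unitary similarity.
\end{itemize}

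\textbf{How your route differs.} You expand entrywise. You use only $|q|^2=qq^*$, the cyclicity $\mathrm{sc}(ab)=\mathrm{sc}(ba)$, Hermiticity of $\bC$, and the fact that the $1\times 1$ quantity $\bx^*\bC\bx$ is Hermitian and hence a dual number. This avoids the spectral theorem entirely. It also makes the noncommutativity explicit. In $\mathbb{DH}$ the trace is not cyclic and not invariant under unitary similarity in general; already for $n=1$, a unit quaternion $u$ gives $u\,a\,u^*\neq a$ for most $a$. The paper's appeal to trace invariance is justified here only because the matrices involved are Hermitian. Their traces are therefore dual numbers equal to their scalar parts, and that is exactly the scalar-part cyclicity you use.

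You also state precisely why the optimizer sets coincide under the lexicographic order: $t\mapsto c-2t$ is order-reversing. The paper leaves this implicit. The paper's route buys mainly exposition: it exhibits $\bx/\sqrt n$ as the unit eigenvector of $\bX$, which foreshadows the spectral relaxation \eqref{opt:init2}.

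\textbf{On the factor of $2$.} In the paper, the ``factor of $2$'' between \eqref{opt: LS2} and \eqref{opt: LS3} is just the coefficient in
\[
\|\bC-\bx\bx^*\|_F^2=\|\bC\|_F^2+n^2-2\,\bx^*\bC\bx .
\]
That is, the two objectives agree up to an additive constant and the factor $-2$, and your displayed identity already delivers this. Your last paragraph instead attaches the factor to the relation between \eqref{opt: LS1} and \eqref{opt: LS2}. That computation is correct: off-diagonal terms pair up via Hermiticity, and diagonal terms are constant. However, it concerns a different pair of problems from the ``their'' in the statement, so it is supplementary rather than the part the proposition requires.
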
 

The maximization problem~\eqref{opt: LS3} is nonconvex and can be challenging to solve directly. 
Nevertheless, by slightly relaxing the constraints, we obtain the following natural spectral relaxation, 
which reduces the computation to a leading-eigenvectors problem:
\begin{equation}\label{opt:init2}
    \mathop{\arg\max}\limits_{\bx\in\mathbb{DH}^n,\ \|\bx\|^2_2=n}\ \bx^*\bC\bx
\end{equation}

\begin{fact}[{\citet[Lemma~4.2]{ling2022minimax}}]\label{prop:lem42 for minimax paper}
Let $\bC\in\mathbb{DH}^{n\times n}$ be a Hermitian matrix and $\lambda_1$ be the largest right eigenvalue of $\bC$
with associated right eigenvector $\bu_1\in\mathbb{DH}^n$. Then
\[
\lambda_1
= \max\left\{\|\bx\|^{-2}(\bx^*\bC\bx)\ \big|\ \bx\in\mathbb{DH}^n\setminus\{\bm{0}\}\right\},
\]
and the maximum is attained at $\bx=\bu_1$.
\end{fact}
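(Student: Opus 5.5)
The plan is to reduce the dual quaternion Rayleigh quotient to a standard and an infinitesimal part, and to use the spectral decomposition of Hermitian dual quaternion matrices. The ordering on $\mathbb{D}$ is lexicographic: first compare standard parts, then infinitesimal parts. Throughout, I assume $\|\bx\|_{\bst}\neq 0$ so that $\|\bx\|^{-2}$ is defined.

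The first step is to invoke the unitary diagonalization of Hermitian dual quaternion matrices (Qi--Luo): there is a unitary $\bU\in\mathbb{DH}^{n\times n}$ with $\bC=\bU\,\mathrm{diag}(\lambda_1,\dots,\lambda_n)\bU^*$. Here the $\lambda_k\in\mathbb{D}$ are dual numbers ordered so that $\lambda_1\ge\cdots\ge\lambda_n$, and the columns $\bu_k$ of $\bU$ are right eigenvectors, $\bC\bu_k=\bu_k\lambda_k$.

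Next, set $\by=\bU^*\bx$. Since $\bU$ is unitary, $\|\by\|^2=\|\bx\|^2$. Because each $\lambda_k$ is a dual number (hence commutes with quaternions), we get
\[
\bx^*\bC\bx=\sum_k y_k^*\lambda_k y_k=\sum_k \lambda_k |y_k|^2 ,
\]
where each $|y_k|^2\in\mathbb{D}$ has nonnegative standard part. Therefore
\[
\lambda_1\|\by\|^2-\bx^*\bC\bx=\sum_k(\lambda_1-\lambda_k)|y_k|^2 .
\]

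The main step is to show this sum is $\ge 0$ in the lexicographic order. This is where the zero-divisor and seminorm subtleties enter. Write $\lambda_1-\lambda_k=a_k+b_k\epsilon$ and $|y_k|^2=p_k+q_k\epsilon$, with $a_k\ge0$, $p_k\ge0$, and $b_k\ge0$ whenever $a_k=0$. The sum has standard part $\sum a_kp_k\ge0$. If this standard part is strictly positive, the inequality holds. If it is zero, then $a_kp_k=0$ for every $k$. The infinitesimal part is $\sum(a_kq_k+b_kp_k)$. For indices with $a_k=0$ we have $b_kp_k\ge0$. For indices with $a_k>0$ we must have $p_k=0$, i.e.\ $y_{k,\bst}=0$, which forces $q_k=2\,\mathrm{sc}(y_{k,\bst}^*y_{k,\bii})=0$, and also $b_kp_k=0$. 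Hence the infinitesimal part is nonnegative as well. I expect this case analysis to be the delicate point.

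Finally, to divide by $\|\bx\|^2$, I note that multiplication by the positive dual number $\|\bx\|^{-2}$ (positive standard part) preserves the lexicographic order. This gives $\|\bx\|^{-2}\bx^*\bC\bx\le\lambda_1$. Equality at $\bx=\bu_1$ follows from
\[
\bu_1^*\bC\bu_1=\bu_1^*\bu_1\lambda_1=\lambda_1
\]
for the unit vector $\bu_1$, or more generally $\|\bu_1\|^2\lambda_1$ after rescaling. This shows the maximum is attained at $\bu_1$ and equals $\lambda_1$.
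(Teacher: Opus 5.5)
Your proof is correct. The paper gives no argument of its own and simply imports the result from Ling--Qi--Yan; your route (unitary diagonalization $\bC=\bU\,\mathrm{diag}(\lambda_1,\dots,\lambda_n)\bU^*$, the identity $\bx^*\bC\bx=\sum_k\lambda_k|y_k|^2$, then a lexicographic comparison with $\lambda_1\|\bx\|^2$) is, as far as I recall, the standard argument behind that cited lemma.

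Three of your checks are exactly what makes that comparison rigorous: you restrict to appreciable $\bx$, you verify $|y_k|^2\ge 0$, and you verify that products of nonnegative dual numbers stay nonnegative. The restriction matters because, despite the statement's wording "$\bx\neq\bm{0}$", $\|\bx\|^{-2}$ is undefined for nonzero infinitesimal $\bx$.
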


By Fact~\ref{prop:lem42 for minimax paper}, the optimum of~\eqref{opt:init2} is attained at the dominant right eigenvector $\bu_1$ of $\bC$, normalized so that $\|\bu_1\|_2^2=n$. 
Recently, \citet{cui2024power} proposed a power iteration method for computing the dominant eigenvalue and a corresponding eigenvector of a dual quaternion Hermitian matrix, and established linear convergence of the resulting iterates to the dominant eigenpair. The procedure is summarized in Algorithm~\ref{alg:init}.

\begin{algorithm}[h]
\caption{Power Iteration for Eigenvector Estimator}
\label{alg:init}
\begin{algorithmic}[1]
\STATE \textbf{Input:} Hermitian matrix $\bC\in \mathbb{DH}^{n\times n}$ with dominant right eigenvalue $\lambda_1$ satisfying $\lambda_{1,\mathrm{st}}\neq0$, random initial point $\bw^{0}\in \mathbb{DH}^n$ such that $(\bm{w}^0\st)^*\bu_{1,\bst}\neq 0$.
\FOR{$k=1,2,\dots$}
    \STATE $\by^{k} = \bC \bw^{k-1}$.
    \STATE $\bw^{k} = \by^{k}\cdot\left(\|\by^{k}\|_2\right)^{-1}$.
\ENDFOR
\STATE \textbf{Output:} $\sqrt{n}\bw^{k-1}$.
\end{algorithmic}
\end{algorithm}

\begin{remark}[Well-definedness of the Inverse]\label{rem:wellposed-line4}
The mild assumption $\lambda_{1,\bst}\neq0$ and $(\bm{w}^0\st)^*\bu_{1,\bst}\neq 0$ ensures that dominant eigenspace exists and the initialization has a nonzero standard part projection onto the dominant eigenspace.
If $(\bm{w}^0\st)^*\bu_{1,\bst}= 0$, then the power iteration cannot amplify (and hence cannot converge to) the dominant eigenspace.
Under these assumptions, the standard part iterate remains nonzero, i.e., $\bm y^k_{\mathrm{st}}\neq \bm 0$, making the inverse well-defined.
\end{remark}

We next characterize the estimation error of this spectral estimator to the ground truth $\hat{\bx}$.

\begin{proposition}[Eigenvector Estimation Error]\label{lem: init1 error}
Let $\bx\in\mathbb{DH}^n$ satisfy $\|\bx\|_2^2=n$ and $\bx^*\bC\bx \ge \hat{\bx}^*\bC\hat{\bx}$. Then
\begin{equation*}
\operatorname{d}(\bx,\hat{\bx}) \le \frac{4\|\bDelta\|\op}{\sqrt{n}}.
\end{equation*}
\end{proposition}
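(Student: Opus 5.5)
Following the standard argument for phase and rotation synchronization (as in \citet{ling2022improved,liu2023unified}), we treat dual-number quantities with the lexicographic order and the seminorm. Substitute $\bC=\hat{\bx}\hat{\bx}^*+\bDelta$ (we take $\bC$ Hermitian, with the diagonal absorbed into $\bDelta$ if needed) into the optimality inequality $\bx^*\bC\bx\ge\hat{\bx}^*\bC\hat{\bx}$. Since $\hat{\bx}\in\mathrm{UDQ}^n$, we have $\hat{\bx}^*\hat{\bx}=n$, and so
\[
|\hat{\bx}^*\bx|^2 + \bx^*\bDelta\bx \;\ge\; n^2 + \hat{\bx}^*\bDelta\hat{\bx}.
\]
This gives $n^2-|\hat{\bx}^*\bx|^2\le \bx^*\bDelta\bx-\hat{\bx}^*\bDelta\hat{\bx}$. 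We write $\bx^*\bDelta\bx-\hat{\bx}^*\bDelta\hat{\bx}$ in the form $(\bx-\hat{\bx}z)^*\bDelta\bx+(\hat{\bx}z)^*\bDelta(\bx-\hat{\bx}z)$ for a suitable unit $z$. Here we use $z^*\hat{\bx}^*\bDelta\hat{\bx}z=\hat{\bx}^*\bDelta\hat{\bx}$, which holds because that quantity is a Hermitian scalar, hence a dual number, and so commutes with $z$. Bounding each term by the operator norm and the Cauchy--Schwarz inequality gives
\[
n^2-|\hat{\bx}^*\bx|^2\le 2\sqrt{n}\,\|\bDelta\|\op\,\|\bx-\hat{\bx}z\|_2 .
\]

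Next we relate the left-hand side to the distance. Choose $z=\mathcal{N}(\hat{\bx}^*\bx)$, the normalization of the inner product. Then $\mathrm{sc}(z^*\hat{\bx}^*\bx)=|\hat{\bx}^*\bx|$, and
\[
\|\bx-\hat{\bx}z\|_2^2=2n-2|\hat{\bx}^*\bx|=\frac{2\,(n^2-|\hat{\bx}^*\bx|^2)}{n+|\hat{\bx}^*\bx|}.
\]
Since $n+|\hat{\bx}^*\bx|\ge n$, we get $\operatorname{d}(\bx,\hat{\bx})^2\le \|\bx-\hat{\bx}z\|_2^2\le \tfrac{2}{n}(n^2-|\hat{\bx}^*\bx|^2)\le \tfrac{4}{\sqrt{n}}\|\bDelta\|\op\,\|\bx-\hat{\bx}z\|_2$. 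Dividing by $\|\bx-\hat{\bx}z\|_2$ gives $\|\bx-\hat{\bx}z\|_2\le 4\|\bDelta\|\op/\sqrt{n}$, and taking the minimum over $z$ yields the stated bound.

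The main obstacle is doing all of this rigorously over dual numbers. First, the normalization $\mathcal{N}(\hat{\bx}^*\bx)$ needs the standard part of $\hat{\bx}^*\bx$ to be nonzero. If it is zero, the standard-part bound is trivial, $\operatorname{d}_{\bst}\le\sqrt{2n}$, and that case has to be handled separately. Second, each inequality has to hold in the lexicographic order. Third, division by the dual number $\|\bx-\hat{\bx}z\|_2$ requires a nonzero standard part; if the standard part vanishes, we compare the dual parts directly. I would verify each inequality componentwise: first for the standard part, which reduces to the quaternion case, and then for the infinitesimal part when the standard parts are equal. I would also check that the operator-norm bound $|\bu^*\bDelta\bv|\le\|\bDelta\|\op\|\bu\|_2\|\bv\|_2$ holds for dual quaternions in this order, citing the dual quaternion Cauchy--Schwarz inequality, and confirm that the identity for $\|\bx-\hat{\bx}z\|_2^2$ survives the noncommutativity of dual quaternions.
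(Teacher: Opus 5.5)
Your main argument is the paper's proof. You substitute $\bC=\hat{\bx}\hat{\bx}^*+\bDelta$ into the optimality inequality, bound $\bx^*\bDelta\bx-\hat{\bx}^*\bDelta\hat{\bx}$ by $2\sqrt{n}\,\|\bDelta\|_{\mathrm{op}}$ times an aligned distance, use $2n-2|\hat{\bx}^*\bx|\le \frac{2}{n}\bigl(n^2-|\hat{\bx}^*\bx|^2\bigr)$, and divide. The differences are cosmetic:
\begin{itemize}
\item The paper splits the noise term symmetrically as $(\bx+\hat{\bx})^*\bDelta(\bx-\hat{\bx})+(\bx-\hat{\bx})^*\bDelta(\bx+\hat{\bx})$.
\item It computes $\operatorname{d}^2=2n-2|\hat{\bx}^*\bx|$ by solving the lexicographic maximization over $\mathrm{UDQ}$, then realigns $\bx$ without loss of generality.
\item Your $z$-dependent split uses the explicit aligner $z=\mathcal{N}(\hat{\bx}^*\bx)$, for which in fact $z^*\hat{\bx}^*\bx=|\hat{\bx}^*\bx|$ exactly. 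It therefore needs only $\operatorname{d}\le\|\bx-\hat{\bx}z\|_2$ and skips both of the paper's steps.
\end{itemize}

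The one place your plan fails is the edge cases.

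The case $(\hat{\bx}^*\bx)_{\mathrm{st}}=0$ needs no separate treatment. $\mathcal{N}$ is defined there, $z^*\hat{\bx}^*\bx=|(\hat{\bx}^*\bx)_{\mathcal{I}}|\,\epsilon=|\hat{\bx}^*\bx|$ still holds, and $\|\bx-\hat{\bx}z\|_2$ has standard part $\sqrt{2n}>0$, so the main argument runs unchanged. Your fallback $\operatorname{d}_{\mathrm{st}}\le\sqrt{2n}$ would only give a non-strict standard-part comparison, so it would not close that case by itself.

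More seriously, you cannot ``compare the dual parts directly'' when the standard part of $\|\bx-\hat{\bx}z\|_2$ vanishes. If $\|\bDelta\|_{\mathrm{op},\mathrm{st}}>0$, that case is immediate from the strict comparison $0<4\|\bDelta\|_{\mathrm{op},\mathrm{st}}/\sqrt{n}$. If $\|\bDelta\|_{\mathrm{op},\mathrm{st}}=0$, the self-bounding inequality collapses to $0\le 0$ and says nothing about the dual part. In that case the statement is in fact false. Take $n=2$, $\hat{\bx}=(1,1)$, $\bDelta=\bm{0}$, and $\bx=(1+\epsilon,\,1-\epsilon)$. Then $\|\bx\|_2^2=2$ and $\bx^*\bC\bx=|\hat{\bx}^*\bx|^2=4=\hat{\bx}^*\bC\hat{\bx}$, yet $\operatorname{d}(\bx,\hat{\bx})=\sqrt{2}\,\epsilon\neq 0$. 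Replacing $\epsilon$ by $t\epsilon$ in $\bx$ gives $\operatorname{d}=\sqrt{2}\,|t|\,\epsilon$. This $\bx$ stays feasible for any $\bDelta$ with zero standard part, because the noise terms then cancel in the hypothesis, so the bound fails for large $t$.

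You therefore need the extra assumption $\|\bDelta\|_{\mathrm{op},\mathrm{st}}\neq 0$. The paper's proof tacitly imposes exactly this with ``Suppose $\|\bDelta\|_{\mathrm{op},\mathrm{st}}\neq 0$''. With that assumption your argument is complete.
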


Proposition~\ref{lem: init1 error} shows that the dominant eigenvector $\bu_1$ of $\bC$, normalized to satisfy $\|\bu_1\|_2^2=n$, is close to the ground truth~$\hat{\bx}$. 
However, $\bu_1$ does not necessarily lie in $\mathrm{UDQ}^n$. 
To obtain a feasible initializer, we therefore apply an element-wise projection of $\bu_1$ onto $\mathrm{UDQ}^n$.

We first define the normalization operator $\mathcal{N}(\cdot)$, which maps a dual quaternion $x\in\mathbb{DH}$ to a unit dual quaternion $\mathcal{N}(x)\in\mathrm{UDQ}$. 
Let $u:=\mathcal{N}(x)$. If $x\st\neq 0$, then
\begin{equation*}
    u\st = \frac{x\st}{|x\st|},\ u\ii = \frac{x\ii}{|x\st|}-\frac{x\st}{|x\st|}\mathrm{sc}\left(\frac{x\st^*}{|x\st|}\frac{x\ii}{|x\st|}\right).
\end{equation*}
If $x\st=0$ and $x\ii \neq 0$, then
\begin{equation*}
    u\st = \frac{x\ii}{|x\ii|},\ u\ii\text{ is any quaternion satisfying } \mathrm{sc}(x\ii^*u\ii)=0.
\end{equation*}

Next, we record a stability property of the normalization map $\mathcal{N}(\cdot)$.
It shows that normalizing a dual quaternion cannot increase its distance to any unit dual quaternion
by more than a constant factor. This bound will be used repeatedly to transfer error bounds
from the (possibly infeasible) eigenvector $\bu_1$ to its element-wise projection onto $\mathrm{UDQ}^n$.

\begin{lemma}[Lipschitz Property of $\mathcal{N}$]\label{lem: normalization property}
For any $y \in \mathbb{DH}$ and any $z \in \mathrm{UDQ}$, we have
\[
|\mathcal{N}(y)-z| \le 2|y-z|.
\]
\end{lemma}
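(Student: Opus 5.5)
The plan is to interpret the inequality in the lexicographic (total) order on dual numbers, which the paper uses to compare moduli. Recall the dual modulus: $|w| = |w\st| + \frac{\mathrm{sc}(w\st^* w\ii)}{|w\st|}\epsilon$ when $w\st\neq 0$, and $|w| = |w\ii|\epsilon$ when $w\st=0$. The proof then splits into two steps. First compare the standard parts $|u\st - z\st|$ and $2|y\st-z\st|$, where $u:=\mathcal{N}(y)$. If the inequality between them is strict, we are done. Only in the equality cases do the infinitesimal parts need to be examined. Throughout, $z\in\mathrm{UDQ}$ means $|z\st|=1$ and $\mathrm{sc}(z\st^* z\ii)=0$.

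\emph{Standard part.} When $y\st\neq 0$, the classical quaternion argument applies. We have $u\st = y\st/|y\st|$, and since $|z\st|=1$,
\[
|u\st - z\st| \le \bigl|u\st - y\st\bigr| + |y\st - z\st| = \bigl||y\st|-|z\st|\bigr| + |y\st-z\st| \le 2|y\st-z\st|.
\]
When $y\st=0$, $u\st$ is a unit quaternion, so $|u\st-z\st|\le 2 = 2|y\st - z\st|$. In both cases the standard parts satisfy the required inequality. This reduces the lemma to analyzing the configurations where equality holds.

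\emph{Equality cases and dual parts.} The main case is $y\st=z\st$ (both sides have zero standard part). Then $u\st = z\st$, and both moduli are purely infinitesimal: $|u-z| = |u\ii - z\ii|\epsilon$ and $|y-z| = |y\ii-z\ii|\epsilon$. From the formula for $\mathcal{N}$ with $|y\st|=1$ we get $u\ii = y\ii - z\st\,\mathrm{sc}(z\st^* y\ii)$. Using $\mathrm{sc}(z\st^*z\ii)=0$, this gives
\[
u\ii - z\ii = (y\ii - z\ii) - z\st\,\mathrm{sc}\bigl(z\st^*(y\ii-z\ii)\bigr).
\]
This is the orthogonal projection of $y\ii-z\ii$ (viewed in $\mathbb{R}^4$) onto the orthogonal complement of $z\st$. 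Hence $|u\ii-z\ii|\le |y\ii-z\ii| \le 2|y\ii-z\ii|$.

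The remaining equality configurations have nonzero standard parts. Equality in the triangle chain forces $y\st = t z\st$ for a real $t$: with $t>0$ only $t=1$ gives equality (already handled), so the relevant case is $t\le 0$, i.e. $u\st=-z\st$ or $y\st=0$. For $t<0$ I would compute both dual parts explicitly. They are $\mathrm{sc}((u\st-z\st)^*(u\ii-z\ii))/|u\st-z\st|$ and the analogous expression for $y-z$. Substituting $u\st=-z\st$, $y\st=tz\st$ and the formula for $u\ii$ should yield a direct comparison. For $y\st=0$ with $u\st = y\ii/|y\ii| = -z\st$, I would use the freedom in choosing $u\ii$ (any $u\ii$ with $\mathrm{sc}(y\ii^*u\ii)=0$), or show the inequality holds for the admissible choice.

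I expect this last bookkeeping to be the main obstacle: the antipodal equality cases, where the lexicographic comparison falls to the dual parts, and especially the degenerate branch $y\st=0$ where $\mathcal{N}$ is not uniquely defined. If the direct comparison there fails, a fallback is to note that the statement is only needed with some admissible selection of $u\ii$, and to pick $u\ii$ to minimize the dual part of $|u-z|$.
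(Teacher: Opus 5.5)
Your argument is correct and takes a different route from the paper. The paper works with squared moduli. It expands $|u-z|^2$ and $4|y-z|^2$ via $|q|^2=|q_{\mathrm{st}}|^2+2\epsilon\,\mathrm{sc}(q_{\mathrm{st}}q_{\mathcal{I}}^*)$. It then reduces the standard-part comparison to the scalar inequality $0\le 4r^2+2+t(2-8r)$, with $r=|y_{\mathrm{st}}|$ and $t=\mathrm{sc}(y_{\mathrm{st}}z_{\mathrm{st}}^*)/r$, where equality holds only at $r=t=1$. In the equality case $y_{\mathrm{st}}=z_{\mathrm{st}}$, it notes that both squared moduli vanish and then ``takes square roots.''

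You compare the unsquared moduli instead. You get the standard part from the triangle and reverse triangle inequalities, which is shorter than the paper's algebra. You settle $y_{\mathrm{st}}=z_{\mathrm{st}}$ by recognizing $u_{\mathcal{I}}-z_{\mathcal{I}}$ as the orthogonal projection of $y_{\mathcal{I}}-z_{\mathcal{I}}$ onto $z_{\mathrm{st}}^{\perp}$.

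This gains something real. When $y_{\mathrm{st}}=z_{\mathrm{st}}$, both $|u-z|=|u_{\mathcal{I}}-z_{\mathcal{I}}|\epsilon$ and $2|y-z|=2|y_{\mathcal{I}}-z_{\mathcal{I}}|\epsilon$ are infinitesimal. Every infinitesimal squares to $0$, so the paper's ``$0\le 0$, take square roots'' step does not actually compare them. Your inequality $|u_{\mathcal{I}}-z_{\mathcal{I}}|\le|y_{\mathcal{I}}-z_{\mathcal{I}}|$ does. This tangent-projection mechanism is also what the paper later uses in proving Lemma~\ref{lem:general projection dual}.

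Two loose ends remain, neither serious.

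\emph{The case $t<0$.} Here $y_{\mathrm{st}}=tz_{\mathrm{st}}$ is not an equality case: $|u_{\mathrm{st}}-z_{\mathrm{st}}|=2<2(1-t)=2|y_{\mathrm{st}}-z_{\mathrm{st}}|$. The reverse triangle inequality is strict unless $y_{\mathrm{st}}$ is a nonnegative multiple of $z_{\mathrm{st}}$. So for $y_{\mathrm{st}}\neq 0$, equality forces $y_{\mathrm{st}}=z_{\mathrm{st}}$, and nothing else needs checking.

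\emph{The antipodal degenerate case.} When $y_{\mathrm{st}}=0$ and $u_{\mathrm{st}}=y_{\mathcal{I}}/|y_{\mathcal{I}}|=-z_{\mathrm{st}}$, the inequality holds for every admissible $u_{\mathcal{I}}$, so no fallback selection is needed. The dual part of $|u-z|$ is $-\mathrm{sc}(z_{\mathrm{st}}^*(u_{\mathcal{I}}-z_{\mathcal{I}}))=\mathrm{sc}(u_{\mathrm{st}}^*u_{\mathcal{I}})+\mathrm{sc}(z_{\mathrm{st}}^*z_{\mathcal{I}})=0$, using admissibility and $z\in\mathrm{UDQ}$. The dual part of $2|y-z|$ is $-2\,\mathrm{sc}(z_{\mathrm{st}}^*y_{\mathcal{I}})=2|y_{\mathcal{I}}|>0$. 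Hence $|u-z|=2<2+2|y_{\mathcal{I}}|\epsilon=2|y-z|$. The paper's Step~4 also passes over this case, where both squared standard parts equal $4$.
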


\begin{remark}\label{remark:power thm31}
Related results were established in \citet{liu2017estimation} for the special case of $\mathrm{SO}(2)$, where the proof exploits algebraic properties of complex numbers. A more general treatment for the $\mathrm{O}(d)$ group was given in \citet{liu2023unified}, with arguments based on the definition of projection rather than an explicit normalization map. In contrast, our proof relies only on the algebraic properties of dual quaternions. Nevertheless, normalization and projection coincide in an important sense. In particular, \citet[Theorem~3.1]{cui2024power} shows that for any $x\in\mathbb{DH}$, the normalized dual quaternion $u=\mathcal{N}(x)$ is the metric projection of $x$ onto $\mathrm{UDQ}$, namely
\[
\mathcal{N}(x)\in \mathop{\arg\min}_{u\in\mathrm{UDQ}} |u-x|^2.
\]
Consequently, Lemma~\ref{lem: normalization property} can also be proved using standard properties of metric projections.
\end{remark}

The following result shows that the projection of a dual quaternion vector $\by$ onto $\mathrm{UDQ}^n$,
denoted by $\Pi(\by)$, admits a closed-form expression.

\begin{proposition}[Projection onto $\mathrm{UDQ}^n$]\label{prop:component-wise proj}
    For any $\by\in\mathbb{DH}^n$, define
\begin{equation*}
\widetilde{y}_i = 
\begin{cases} 
\mathcal{N}(y_i) & \text{if } y_{i} \neq 0,\\
\mathcal{N}(\bm{e} ^*\by) & \text{otherwise},
\end{cases}
\end{equation*}
where $\bm{e}\in\mathbb{DH}^n$ is any vector satisfying $\bm{e}^*\by \neq 0$. If $\by=\bm{0}$, we set $\widetilde{\by} = \bm{1}$. 
Then $\tilde{\by}$ is the projection of $\by$ onto $\mathrm{UDQ}^n$, i.e., 
$
\widetilde{\by} = \Pi(\bm{y})
$. 
\end{proposition}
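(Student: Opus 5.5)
The plan is to first show that the objective separates across coordinates, and then to solve each single-coordinate problem with the metric-projection property of $\mathcal{N}$ from Remark~\ref{remark:power thm31}. The dual-number valued squared norm is additive over entries:
\[
\|\bu-\by\|_2^2=\sum_{i=1}^n |u_i-y_i|^2 .
\]
The constraint set $\mathrm{UDQ}^n$ is a Cartesian product. Hence $\bu\in\mathrm{UDQ}^n$ minimizes $\|\bu-\by\|_2^2$ (in the dual-number total order used for the argmin) whenever each $u_i$ minimizes $|u_i-y_i|^2$ over $\mathrm{UDQ}$. The reason is that a sum of coordinatewise minimal dual numbers is minimal: if $a_i\le b_i$ for every $i$, then $\sum_i a_i\le\sum_i b_i$ in the lexicographic order, because the order is compatible with addition. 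So the whole proof reduces to identifying a per-coordinate minimizer.

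For coordinates with $y_i\neq 0$, I will invoke \citet[Theorem~3.1]{cui2024power}, recorded in Remark~\ref{remark:power thm31}, which says $\mathcal{N}(y_i)\in\argmin_{u\in\mathrm{UDQ}}|u-y_i|^2$. This holds both when $y_{i,\bst}\neq0$ and when $y_{i,\bst}=0$ but $y_{i,\mathcal{I}}\neq0$, since $\mathcal{N}$ was defined for both cases. For coordinates with $y_i=0$, every $u\in\mathrm{UDQ}$ gives $|u-0|^2=|u|^2=1$. So any unit dual quaternion is a minimizer, and in particular $\mathcal{N}(\bm e^*\by)$ is one. This value is well defined because $\bm e^*\by\neq0$, so $\mathcal{N}$ applies. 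The choice serves only to fix a concrete element of $\mathrm{UDQ}$. Likewise, if $\by=\bm 0$, every $\bu\in\mathrm{UDQ}^n$ attains $\|\bu\|_2^2=n$, so $\widetilde\by=\bm 1$ is a minimizer.

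The main obstacle is the ordering subtlety in the first step. The minimization is over dual numbers, and the relevant notion is the total (lexicographic) order, so I need to check that adding coordinatewise inequalities preserves the order. I will verify this directly. If $a\le b$ and $c\le d$, compare standard parts first: if either standard-part inequality is strict, the standard part of the sum is strict. Otherwise both standard parts are equal, and the infinitesimal parts add with the same direction. A secondary point is that I rely on Cui et al.'s result as a black box, including the degenerate case $y_{i,\bst}=0$. If that case were not covered there, I would check it by hand. With $u_\bst$ a unit quaternion, $|u-y_i|^2$ has standard part $1$ and infinitesimal part $-2\,\mathrm{sc}(u_\bst^*y_{i,\mathcal{I}})$. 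This is minimized exactly by $u_\bst=y_{i,\mathcal{I}}/|y_{i,\mathcal{I}}|$, with $u_{\mathcal{I}}$ then arbitrary subject to unit-ness, which matches the definition of $\mathcal{N}$.
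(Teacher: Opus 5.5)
Your proposal is correct and follows essentially the same route as the paper. You split $\|\bu-\by\|_2^2=\sum_i|u_i-y_i|^2$ over coordinates, invoke the metric-projection property of $\mathcal{N}$ (Remark~\ref{remark:power thm31}) when $y_i\neq 0$, use $|u_i-0|^2=1$ for every $u_i\in\mathrm{UDQ}$ when $y_i=0$, and handle $\by=\bm{0}$ via $\|\bu\|_2^2=n$. Your extra checks, that the lexicographic order is compatible with addition and that $u_{\bst}=y_{i,\mathcal{I}}/|y_{i,\mathcal{I}}|$ is optimal when $y_{i,\bst}=0$, supply details the paper leaves implicit.
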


With the component-wise projection in Proposition~\ref{prop:component-wise proj}, we can define an alternative feasible estimator
\begin{equation}\label{eq:init2}
\widetilde{\bx}:=\Pi(\bu_1),
\end{equation}
which lies in $\mathrm{UDQ}^n$ by construction. We next show that $\widetilde{\bx}$ remains close to the ground truth signal~$\hat{\bx}$.

\begin{theorem}[Rounded Spectral Estimation Error]\label{lem: init2 error}
For $\widetilde{\bx}$ given by~(\ref{eq:init2}), we have
\[
\operatorname{d}(\widetilde{\bx},\hat{\bx})\leq \frac{8\|\bDelta\|\op}{\sqrt{n}}.
\]
\end{theorem}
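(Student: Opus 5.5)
The plan is to reduce the bound to Proposition~\ref{lem: init1 error} and Lemma~\ref{lem: normalization property}, applied entrywise.

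\textbf{Step 1: the eigenvector is an admissible point.} Let $\bu_1$ be the dominant right eigenvector, normalized so that $\|\bu_1\|_2^2=n$. By Fact~\ref{prop:lem42 for minimax paper}, $\bu_1$ maximizes $\bx^*\bC\bx$ over $\|\bx\|_2^2=n$. Since $\hat{\bx}\in\mathrm{UDQ}^n$ also satisfies $\|\hat{\bx}\|_2^2=n$, we have $\bu_1^*\bC\bu_1\ge\hat{\bx}^*\bC\hat{\bx}$. Proposition~\ref{lem: init1 error} then gives
\[
\operatorname{d}(\bu_1,\hat{\bx})\le 4\|\bDelta\|\op/\sqrt{n}.
\]

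\textbf{Step 2: choose the alignment.} Let $z^\star\in\mathrm{UDQ}$ attain the minimum in $\operatorname{d}(\bu_1,\hat{\bx})=\|\bu_1-\hat{\bx}z^\star\|_2$. Existence follows from compactness of the standard part together with the quadratic structure in the dual part; if needed, an infimizing sequence suffices. Since $z^\star$ and $\hat{x}_i$ are unit dual quaternions, each $\hat{x}_iz^\star$ lies in $\mathrm{UDQ}$, because $(\hat x_iz^\star)(\hat x_iz^\star)^*=\hat x_i z^\star z^{\star *}\hat x_i^*=1$.

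\textbf{Step 3: round entrywise.} For each $i$ with $u_{1,i}\neq0$, Proposition~\ref{prop:component-wise proj} gives $\widetilde{x}_i=\mathcal{N}(u_{1,i})$. Lemma~\ref{lem: normalization property} then yields
\[
|\widetilde{x}_i-\hat{x}_iz^\star|\le 2|u_{1,i}-\hat{x}_iz^\star|.
\]
For indices with $u_{1,i}=0$, $\widetilde{x}_i$ is some unit dual quaternion $w$. We need $|w-\hat x_i z^\star|\le 2|0-\hat x_iz^\star|=2$.

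\textbf{Main obstacle: the zero entries.} The bound $|w-v|\le 2$ for $w,v\in\mathrm{UDQ}$ must be read in the dual-number sense. The standard part is at most $2$. The dual part is not automatically controlled, since the modulus is only a seminorm. I would first try to show that Lemma~\ref{lem: normalization property} still holds with $y=0$ under some convention for $\mathcal{N}(0)$. Failing that, I would argue that the standard part $|w-v|_{\bst}=|w\st-v\st|\le2$ holds, and that when $|w\st-v\st|_{\bst}<2$ the dual part is free to be chosen, since $\mathcal{N}(\bm e^*\by)$ has a choice. I would also note that in the dual-number order a strictly smaller standard part dominates regardless of the dual part. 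The equality case $w\st=-v\st$ needs separate handling, and I expect this to be the delicate step. In practice I would then note that the paper's ordering on dual numbers is lexicographic, so a bound with strictly smaller standard part is decisive.

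\textbf{Step 4: sum over entries.} Squaring and summing over $i$ gives
\[
\|\widetilde{\bx}-\hat{\bx}z^\star\|_2^2\le 4\|\bu_1-\hat{\bx}z^\star\|_2^2 .
\]
This uses that squaring and summing dual numbers preserves the lexicographic inequality when standard parts are nonnegative; I would verify monotonicity of $a\mapsto a^2$ and of addition in that order. Taking square roots gives $\|\widetilde{\bx}-\hat{\bx}z^\star\|_2\le 2\operatorname{d}(\bu_1,\hat{\bx})$. Finally,
\[
\operatorname{d}(\widetilde{\bx},\hat{\bx})\le\|\widetilde{\bx}-\hat{\bx}z^\star\|_2\le 8\|\bDelta\|\op/\sqrt{n}.
\]
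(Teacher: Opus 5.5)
Your route is the paper's: apply Proposition~\ref{lem: init1 error} to the normalized eigenvector, apply Lemma~\ref{lem: normalization property} entrywise against an aligned copy of $\hat{\bx}$, bound the zero entries crudely by $4$, and sum squares. Comparing $\Pi(\bu_1)$ with $\hat{\bx}z^\star$, instead of re-aligning the eigenvector as the paper does, is a harmless and slightly cleaner variant, because it avoids any appeal to right-equivariance of $\Pi$.

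The one step you leave open, the zero entries, is not delicate. It is exactly the bound $|\mathcal{N}(\bm{e}^*\bx)-\hat{x}_i|^2\le 4=4|0-\hat{x}_i|^2$ that the paper asserts without comment. For any $w,v\in\mathrm{UDQ}$, expanding as in Step~1 of the proof of Lemma~\ref{lem: normalization property} gives
\[
|w-v|^2=\bigl(2-2\,\mathrm{sc}(w_{\mathrm{st}}v_{\mathrm{st}}^*)\bigr)-2\epsilon\bigl(\mathrm{sc}(w_{\mathrm{st}}v_{\mathcal{I}}^*)+\mathrm{sc}(v_{\mathrm{st}}w_{\mathcal{I}}^*)\bigr).
\]
The standard part is at most $4$, with equality only if $w_{\mathrm{st}}=-v_{\mathrm{st}}$. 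In that case the two scalar terms become $-\mathrm{sc}(v_{\mathrm{st}}v_{\mathcal{I}}^*)$ and $-\mathrm{sc}(w_{\mathrm{st}}w_{\mathcal{I}}^*)$, and both vanish by the unit constraint, so $|w-v|^2=4$ exactly. Hence $|w-v|^2\le 4|0-v|^2$ in the lexicographic order for every $w\in\mathrm{UDQ}$. In other words, Lemma~\ref{lem: normalization property} holds at $y=0$ under any rounding convention, and you need not exploit any freedom in $\mathcal{N}(\bm{e}^*\by)$.

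Two smaller points. First, the minimizer $z^\star$ exists by the case analysis in the proof of Proposition~\ref{lem: init1 error}, which shows the lexicographic maximum is attained in every case. Cite that rather than an infimizing sequence, which does not interact well with the lexicographic order.

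Second, in Step~4, passing from squares back to norms is valid only when $\|\bu_1-\hat{\bx}z^\star\|_2$ has positive standard part, since every dual number with zero standard part squares to $0$. The paper glosses over this too. If $\operatorname{d}_{\mathrm{st}}(\bu_1,\hat{\bx})=0$, argue directly:
\begin{itemize}
\item $(\bu_1)_{\mathrm{st}}=(\hat{\bx}z^\star)_{\mathrm{st}}$ has unit entries, so $\widetilde{\bx}_{\mathrm{st}}=(\hat{\bx}z^\star)_{\mathrm{st}}$.
\item $(\widetilde{x}_i-\hat{x}_iz^\star)_{\mathcal{I}}$ is then the orthogonal projection of $(u_{1,i}-\hat{x}_iz^\star)_{\mathcal{I}}$ onto $\{\xi\in\mathbb{H}:\mathrm{sc}(q^*\xi)=0\}$ with $q=(u_{1,i})_{\mathrm{st}}$.
\item This yields the bound even with constant~$1$.
\end{itemize}
With these additions your proof is complete.
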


Theorem \ref{lem: init2 error} follows from the Lipschitz property of the normalization/projection map in Lemma~\ref{lem: normalization property}. In particular, it implies that $\widetilde{\bx}$ achieves the same order of accuracy as the (generally infeasible) spectral solution $\bu_1$ to~\eqref{opt:init2}, while ensuring feasibility in $\mathrm{UDQ}^n$.

\section{Iterative Estimation Performance}
\label{sec:iterative}

Although Proposition~\ref{lem: init1 error} and Theorem~\ref{lem: init2 error} show that both the (infeasible) spectral solution of~\eqref{opt:init2} and its rounded version yield accurate estimates close to the ground truth signal, these estimators alone do not directly solve~\eqref{opt: LS2} beyond serving as a starting point.
To obtain a fully iterative refinement procedure whose iterates remain feasible, we develop a \emph{dual quaternion generalized power method} (DQGPM) as the second-stage algorithm.

The GPM was originally proposed by \citet{journee2010generalized} for maximizing a convex function over a compact set via a power-type iteration combined with a projection step. 
Specializing this idea to the synchronization problem~\eqref{opt: LS3}, our method applies an element-wise projection onto $\mathrm{UDQ}^n$ at every iteration, thereby ensuring feasibility throughout the run.
In contrast to eigenvector-based estimators (which only become feasible after a final rounding step), DQGPM can be stopped at any iterate, and the intermediate estimate is always feasible for~\eqref{opt: LS3}.

\begin{algorithm}[h]
\caption{Dual Quaternion Generalized Power Method}
\label{alg:GPM}
\begin{algorithmic}[1]
\STATE \textbf{Input:} Hermitian matrix $\bC\in \mathbb{DH}^{n\times n}$, initializer $\bx^0=\widetilde{\bx}=\Pi(\bu_1)\in\mathrm{UDQ}^n$.
\FOR{$k=1,2,\ldots$}
    \STATE $\by^{k} = \bC\,\bx^{k-1}$. 
    \STATE $\bx^{k} = \Pi(\by^{k})$.
\ENDFOR
\STATE \textbf{Output:} $\bx^{k}$.
\end{algorithmic}
\end{algorithm}

The following Theorem~\ref{estimation perf} shows that, under mild noise conditions and a sufficiently accurate initializer, the DQGPM iterates contract toward the ground truth at a linear rate up to an $O(\|\bDelta\hat{\bx}\|_2/n)$ error floor.

\begin{remark}[Algebraic Order vs. Estimation Metric]
\label{rem:metric_order_distinction}
We need to distinguish the algebraic order used in Section~\ref{sec:init} from the error metric used in our convergence analysis.
While the spectral estimator relies on the lexicographical order of dual numbers to define the dominant eigenpair (strictly prioritizing the standard part), SE(3) synchronization requires bounded errors in both rotation and translation.
Therefore, in this section, we analyze the estimation error by controlling the real-valued magnitudes of the standard component $\operatorname{d}\st$ and the dual component $\operatorname{d}_{\mathcal{I}}$ individually. 
This component-wise analysis is consistent with the algebraic structure of dual numbers, where the convergence of the dual part ($\operatorname{d}_{\mathcal{I}}$) typically depends on the accuracy of the standard part ($\operatorname{d}\st$).
\end{remark}

\begin{theorem}[Estimation error of DQGPM]
\label{estimation perf}
Suppose that the measurement noise satisfies
$\|\bDelta\|_{\bop,\bst}\leq n/350 $, $\|\bDelta\|_{\bop,\bii}\leq n/300 $, and the initialization is given by $\bx^0=\widetilde{\bx}$.
Then the sequence generated by Algorithm \ref{alg:GPM} satisfies 
\begin{equation}
\operatorname{d}\st(\bm{x}^k,\hat{\bm{x}})
\le
\left(\frac1{10}\right)^k\frac{\sqrt n}{25}
+
\frac{700}{53n}
\left\|
(\bm{\Delta}\hat{\bm{x}})_{\rm st}
\right\|_2.
\label{eq:standard-error-bound}
\end{equation}
Moreover, assume that there exists \(B\ge0\) such that
\begin{equation}\label{eq:assumption bound translation}
    \max_{k\ge0}\max_{1\le i\le n}|(r_i^k)_{\mathcal{I}}|\le B,
\end{equation}
where $\bm{r}^k=\hat{\bm{x}}z_k$ with $z_k\in\argmin_{z\in{\rm UDQ}}\|\bm{x}^k-\hat{\bm{x}}z\|_2$, the projection inputs are uniformly nondegenerate:
\begin{equation}\label{eq:assumption nondegenerate}
    \min_{k\ge0}\min_{1\le i\le n}
\left|
\left(
\frac1n Cx^k
\right)_{i,{\rm st}}
\right|
\ge \frac12,
\end{equation}
and there exists a
finite constant \(\gamma\ge0\) such that
\begin{equation}
\delta^{k+1}\ii
 \le
\left\|
(\bx^{k+1}-\hat {\bx}z_k)_{\mathcal{I}}
\right\|_2 +
\gamma
\left\|
(\bx^{k+1}-\hat {\bx}z_k)_{\rm st}
\right\|_2,
\label{eq:gauge-switching-stability}
\end{equation}
where $\delta_{\mathcal{I}}^{k+1}
:=
\|
(\bm{x}^{k+1}-\hat{\bm{x}}z_{k+1})_{\mathcal{I}}\|_2
$.
Then for all \(k\ge0\),
\begin{equation}
\begin{aligned}
&\left|\operatorname{d}_{\mathcal{I}}(\bm{x}^k,\hat{\bm{x}})\right|\leq\delta_{\mathcal{I}}^k
\le
\left(\frac1{10}\right)^k \delta^0\ii
\\
&+
\frac{186B+48 \gamma+7}{1050}
\left[
\frac{\sqrt n}{25}
k\left(\frac1{10}\right)^{k-1}
+
\frac{7000}{477n}
\left\|
(\bm{\Delta}\hat{\bm{x}})_{\rm st}
\right\|_2
\right]
\\
&+
\frac{10}{9}
\left[
\frac{7+6B}{1050}\sqrt{n}
+
\frac{6B+2\gamma}{n}
\left\|
(\bm{\Delta}\hat{\bm{x}})_{\rm st}
\right\|_2
\right].
\end{aligned}
\label{eq:aligned-dual-error-bound}
\end{equation}
\end{theorem}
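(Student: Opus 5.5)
The proof splits into a standard-part analysis and a dual-part analysis. The dual-part estimates are driven by the standard-part ones.

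\textbf{Standard part.} Taking standard parts, $\bC\st=\hat{\bx}\st\hat{\bx}\st^*+\bDelta\st$ is a Hermitian quaternion matrix. The map $\Pi$ acts on $\by\st$ as entrywise quaternion normalization $y\mapsto y/|y|$, and $\operatorname{d}\st$ is the quaternion (unit-group) alignment distance. This is exactly the quaternion synchronization setting, so I will follow the GPM contraction argument of \citet{liu2023unified,zhu2023rotation}. For the base case, Theorem~\ref{lem: init2 error} together with $\|\bDelta\|_{\bop,\bst}\le n/350$ gives $\operatorname{d}\st(\bx^0,\hat\bx)\le 8/350\cdot\sqrt n<\sqrt n/25$.

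For the inductive step, let $z_k$ be the optimal alignment and write
\[
\tfrac1n\by^{k+1}\st=\hat\bx\st\,\bigl(\tfrac1n\hat\bx\st^*\bx^k\st\bigr)+\tfrac1n\bDelta\st\bx^k\st .
\]
The scalar $\tfrac1n\hat\bx\st^*\bx^k\st$ has modulus at least $1-\operatorname{d}\st^2/(2n)$, which is close to $1$, so it equals a positive multiple of $z_{k,\bst}$. The noise term I split as $\bDelta\st\hat\bx\st z_{k,\bst}+\bDelta\st(\bx^k-\hat\bx z_k)\st$. The second piece is at most $\|\bDelta\|_{\bop,\bst}\operatorname{d}\st^k\le (n/350)\operatorname{d}\st^k$.

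Applying the entrywise Lipschitz bound of Lemma~\ref{lem: normalization property} to each coordinate against the unit entries of $\hat\bx\st z_{k,\bst}$, the factor-$2$ Lipschitz constant divided by the modulus gives
\[
\operatorname{d}\st^{k+1}\le \tfrac{2}{1-\eta}\Bigl(\tfrac{1}{350}\operatorname{d}\st^k+\tfrac1n\|(\bDelta\hat\bx)\st\|_2\Bigr)\cdot(\text{const}).
\]
I expect to tune this to $\operatorname{d}\st^{k+1}\le \tfrac1{10}\operatorname{d}\st^k+\tfrac{c}{n}\|(\bDelta\hat\bx)\st\|_2$, with the neighborhood $\operatorname{d}\st\le\sqrt n/25$ preserved by induction. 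Unrolling the geometric series, $\sum_j 10^{-j}=10/9$, then produces the constant $700/(53n)$ in \eqref{eq:standard-error-bound}.

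\textbf{Dual part.} I take imaginary parts of the same identity $\by^{k+1}=\bC\bx^k$. This gives the product-rule expansion
\[
\by^{k+1}\ii=\bC\st\bx^k\ii+\bC\ii\bx^k\st .
\]
The dual component of $\mathcal N$ is an explicit differentiable formula in $(y\st,y\ii)$. Linearizing it around $(\hat x_i z_k)$, its derivative in the $\ii$ direction is bounded by $1/|y_{i,\bst}|\le 2$ by the nondegeneracy assumption \eqref{eq:assumption nondegenerate}. Its derivative in the $\bst$ direction involves $y\ii$, and hence the translation magnitudes, which is where $B$ from \eqref{eq:assumption bound translation} enters.

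Carrying this through yields a perturbed recursion
\[
\|(\bx^{k+1}-\hat\bx z_k)\ii\|_2\le \tfrac1{10}\delta\ii^k+c_1(B)\,\operatorname{d}\st^k+c_2(B)\sqrt n\cdot\tfrac{\|\bDelta\|_{\bop,\bii}}{n}\cdot\text{const}+c_3\tfrac1n\|(\bDelta\hat\bx)\st\|_2 .
\]
The gauge-switching assumption \eqref{eq:gauge-switching-stability} converts the left side into $\delta\ii^{k+1}$, at the cost of $\gamma\|(\bx^{k+1}-\hat\bx z_k)\st\|_2$. That cost is in turn bounded by the standard-part one-step estimate.

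Substituting the bound from \eqref{eq:standard-error-bound} for $\operatorname{d}\st^k$ and unrolling $a_{k+1}\le\tfrac1{10}a_k+b_k$ with $b_k\propto 10^{-k}+\text{const}$ then gives the three groups of terms:
\begin{itemize}
\item the $k(1/10)^{k-1}$ convolution term,
\item the floor terms multiplied by $10/9$,
\item the $\sqrt n$ term coming from $\|\bDelta\|_{\bop,\bii}\le n/300$.
\end{itemize}
The inequality $|\operatorname{d}\ii|\le\delta\ii^k$ follows because the optimal $z_k$ for the dual-number-valued distance has its imaginary part controlled by the imaginary component at that same alignment.

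\textbf{Main obstacle.} I expect the hardest step to be linearizing $\mathcal N$'s dual part with explicit constants. Because the modulus is only a seminorm, the cross term $y\ii\cdot\partial(1/|y\st|)$ must be controlled using $B$ and the lower bound $1/2$. The target is a clean bound of the form $|\mathcal N(y)\ii-w\ii|\le 2|y\ii-w\ii|+(6B+c)|y\st-w\st|$ for $w\in\mathrm{UDQ}$. I would first try proving this scalar inequality directly from the closed form of $\mathcal N$, and then sum it over coordinates.
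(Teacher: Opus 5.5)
Your standard-part argument is correct, and it takes a more direct route than the paper. Write $a_k:=\hat{\bx}\st^*\bx^k\st=|a_k|\,(z_k)\st$, where $|a_k|=n-\tfrac12\operatorname{d}\st(\bx^k,\hat{\bx})^2>0$ in the basin. Positive scale invariance then lets you normalize $\hat{\bx}\st(z_k)\st+|a_k|^{-1}\bDelta\st\bx^k\st$ directly. The factor-$2$ bound also covers entries with vanishing standard part, where $|u\st-w\st|\le 2$ holds trivially. This gives $\operatorname{d}\st^{k+1}\le\frac{2}{|a_k|}\bigl(\|\bDelta\|_{\bop,\bst}\operatorname{d}\st^{k}+\|(\bDelta\hat{\bx})\st\|_2\bigr)$, with no quadratic term. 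The paper instead applies the three-point projection lemma to $\frac2n\bC\bx^k$, together with $|\hat{\bx}^*\bx^k-nz_k|=\frac12\operatorname{d}(\bx^k,\hat{\bx})^2$. That yields contraction $16/175$ and forcing $12/n$, which is where $700/53=12\cdot 175/159$ comes from, not $\sum_j 10^{-j}$. Your constants are smaller, so the first bound of the theorem still follows.

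The dual part has a genuine gap: you assert the factor $\frac1{10}$ in front of $\delta_{\mathcal I}^k$, and linearizing $\mathcal N$ does not produce it. Put $\br^k=\hat{\bx}z_k$, $\bu=\bx^k-\br^k$ and $\eta^k:=z_k^*\hat{\bx}^*\bx^k-n=(\br^k)^*\bu$, so that $\frac1n\hat{\bx}\hat{\bx}^*\bx^k=\br^k+\frac1n\br^k\eta^k$.
\begin{itemize}
\item The dual residual of the input contains $\frac1n\br^k\st(\eta^k)\ii$, of norm $|(\eta^k)\ii|/\sqrt n$. A priori you only know $|(\eta^k)\ii|\le\sqrt n\,\delta_{\mathcal I}^k+B\sqrt n\,\operatorname{d}\st(\bx^k,\hat{\bx})$. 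With the Lipschitz factor $2$ in the $\mathcal I$-direction, this puts a coefficient of order $2$ on $\delta_{\mathcal I}^k$.
\item The tangent projection in the derivative does not rescue this. Since $P_q(q\xi)=q\,(\xi-\mathrm{sc}(\xi))$, it removes only the real part of $(\eta^k)\ii$.
\item When $\operatorname{d}\st(\bx^k,\hat{\bx})>0$, the lexicographic argmin does not even determine $(z_k)\ii$. Every $z$ with $z\st=a_k/|a_k|$ and $\mathrm{sc}(z\st^*z\ii)=0$ is a minimizer, because the dual part of $\|\bx^k-\hat{\bx}z\|_2$ is then independent of $z\ii$.
\item Replacing $z_k$ by $z_k(1+w\epsilon)$ with $w$ purely imaginary keeps it a minimizer. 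It shifts $(\eta^k)\ii$ by $-|a_k|w$ and the dual residual by $-\br^k\st w$. Since $\bx^{k+1}$ does not depend on this choice, the offset reappears undamped in $(\bx^{k+1}-\hat{\bx}z_k)\ii$.
\item Hence an estimate $\|(\bx^{k+1}-\hat{\bx}z_k)\ii\|_2\le\frac1{10}\delta_{\mathcal I}^k+\cdots$ cannot hold for an arbitrary minimizer. To see this, take $\bDelta=0$, $\operatorname{d}\st$ small and $|w|$ large.
\end{itemize}

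The missing ingredient is the paper's error-bound lemma. Among the minimizers, choose $z_k$ so that $z_k^*\hat{\bx}^*\bx^k$ is a dual number. Then $\eta^k$ is self-adjoint, and $\|\bx^k\|_2^2=\|\br^k\|_2^2=n$ gives $\eta^k+(\eta^k)^*=-\bu^*\bu$, hence $\eta^k=-\frac12\bu^*\bu$. This gives $|(\eta^k)\st|=\frac12\operatorname{d}\st^2$ and $|(\eta^k)\ii|=|\sum_i\mathrm{sc}(u_{i,\mathrm{st}}^*u_{i,\mathcal I})|\le\operatorname{d}\st\,\delta_{\mathcal I}^k$. This second-order structure is what makes the coefficient of $\delta_{\mathcal I}^k$ equal to $\frac{2}{\sqrt n}\operatorname{d}\st+\frac2n\|\bDelta\|_{\bop,\bst}\le\frac{3}{35}<\frac1{10}$. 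The hypotheses involving $B$ and $\gamma$ must be read for this particular sequence $z_k$. With that in place, the rest of your plan matches the paper: the bound $2|h\ii|+6B|h\st|$ under the $\frac12$ nondegeneracy, gauge switching, and unrolling.
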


\begin{remark}
\label{rem:gpm_refinement}
It is worth noting the improvement in estimation accuracy provided by DQGPM over the rounded spectral estimator. 
Theorem~\ref{estimation perf} establishes that, asymptotically as $k \to \infty$ (assuming convergent iterates and a small noise constant regime where $\|\bDelta\|_{\bop,\bst}
\leq a n$ for sufficiently small $a>0$), the DQGPM estimation error satisfies:
\begin{equation*}
\begin{aligned}
    &\operatorname{d}_{\bst}(\bx^{\infty},\hat{\bx}) \lesssim \frac{12}{n}\|\bDelta \hat{\bx}\|_{2,\bst}, \\ 
    &|\operatorname{d}_{\mathcal{I}}(\bx^{\infty},\hat{\bx})|\le \delta^\infty\ii \lesssim \frac{396B+124\gamma}{45n}\|\bDelta \hat{\bx}\|_{2,\bst}.
\end{aligned}
\end{equation*}
In contrast, Theorem~\ref{lem: init2 error} guarantees an error bound for the rounded spectral estimator of order $8\|\bDelta\|\op/\sqrt{n}$. 
Comparing these rates, DQGPM yields a tighter bound provided that $\|\bDelta \hat{\bx}\|_{2,\bst} \leq c\sqrt{n}\|\bDelta\|_{\bop,\bst}$ 
for a constant $c < 2/3$.
Note that the trivial bound $c=1$ holds automatically by definition. 
In many stochastic noise models, however,
$\hat{\bx}$ is not aligned with the top singular directions of $\bDelta$, so $\|\bDelta \hat{\bx}\|_2$
is typically much smaller than the worst-case bound (e.g., $c \approx 1/2$ for standard i.i.d. Gaussian noise), which highlights the advantage of the DQGPM refinement.
\end{remark}

\begin{table*}[t]
\caption{Rotation and translation errors in synthetic experiments of DQGPM, SPEC, and EIG across noise levels and observation rates.}
\label{table:all_rt_error}
\begin{center}
\begin{small}
\begin{sc}
\setlength{\tabcolsep}{6pt}        
\renewcommand{\arraystretch}{1.35} 
\resizebox{0.75\textwidth}{!}{
\begin{tabular}{ccccccc}
\hline
\multicolumn{3}{c}{Noise level} &
  $(0.05,5^{\circ})$   &
  $(0.10,10^{\circ})$  &
  $(0.15,15^{\circ})$  &
  $(0.20,20^{\circ})$  \\
\hline
\multirow{6}{*}{$p = 0.05$} &
  \multirow{3}{*}{Error\_r} &
  DQGPM &
  \textbf{0.132 $\pm$ 0.042} &
  \textbf{0.230 $\pm$ 0.048} &
  \textbf{0.326 $\pm$ 0.051} &
  \textbf{0.424 $\pm$ 0.060} \\
&
&
  SPEC &
  1.639 $\pm$ 1.971 &
  1.889 $\pm$ 2.022 &
  1.854 $\pm$ 1.862 &
  2.035 $\pm$ 1.823 \\
&
&
  EIG &
  0.174 $\pm$ 0.156 &
  0.270 $\pm$ 0.099 &
  0.406 $\pm$ 0.190 &
  0.585 $\pm$ 0.572 \\ \cline{2-7}
&
  \multirow{3}{*}{Error\_t} &
  DQGPM &
  \textbf{0.102 $\pm$ 0.032} &
  \textbf{0.190 $\pm$ 0.046} &
  \textbf{0.279 $\pm$ 0.061} &
  \textbf{0.369 $\pm$ 0.078} \\
&
&
  SPEC &
  0.480 $\pm$ 0.530 &
  0.532 $\pm$ 0.504 &
  0.590 $\pm$ 0.472 &
  0.660 $\pm$ 0.455 \\
&
&
  EIG &
  0.551 $\pm$ 1.109 &
  1.609 $\pm$ 6.676 &
  0.899 $\pm$ 1.478 &
  1.043 $\pm$ 1.359 \\ \hline

\multirow{6}{*}{$p = 0.30$} &
  \multirow{3}{*}{Error\_r} &
  DQGPM &
  \textbf{0.027 $\pm$ 0.001} &
  \textbf{0.054 $\pm$ 0.002} &
  \textbf{0.082 $\pm$ 0.004} &
  \textbf{0.111 $\pm$ 0.005} \\
&
&
  SPEC &
  0.032 $\pm$ 0.013 &
  0.192 $\pm$ 0.867 &
  0.094 $\pm$ 0.037 &
  0.141 $\pm$ 0.126 \\
&
&
  EIG &
  0.098 $\pm$ 0.618 &
  0.134 $\pm$ 0.614 &
  0.106 $\pm$ 0.021 &
  0.219 $\pm$ 0.613 \\ \cline{2-7}
&
  \multirow{3}{*}{Error\_t} &
  DQGPM &
  \textbf{0.021 $\pm$ 0.001} &
  \textbf{0.042 $\pm$ 0.002} &
  \textbf{0.063 $\pm$ 0.003} &
  \textbf{0.085 $\pm$ 0.005} \\
&
&
  SPEC &
  0.023 $\pm$ 0.001 &
  0.045 $\pm$ 0.002 &
  0.067 $\pm$ 0.004 &
  0.090 $\pm$ 0.005 \\
&
&
  EIG &
  0.137 $\pm$ 0.441 &
  0.415 $\pm$ 2.123 &
  0.249 $\pm$ 0.677 &
  0.194 $\pm$ 0.257 \\ 
\hline
\end{tabular}
}
\end{sc}
\\
\vspace{1mm}
{\footnotesize\noindent
$^*$ SDR excluded due to limited scalability under uniform edge dropout.
\par}
\end{small}
\end{center}
\end{table*}

Finally, since the dominant eigenvector $\bu_1$ cannot be computed exactly in practice, we quantify the following finite-iteration estimation error of the two-stage procedure power iteration for
initialization followed by DQGPM refinement.

\begin{corollary}[Finite-Iteration Estimation Error]\label{thm:complexity}
Suppose the assumptions in Theorem~\ref{estimation perf} hold. 
Let $\bC\in\mathbb{DH}^{n\times n}$ be Hermitian and admits a \emph{unique} dominant eigenvalue with multiplicity one. 
Denote its right eigenvalues by $\lambda_1 > \lambda_2 \geq \cdots \geq \lambda_n$, where each $\lambda_i = \lambda_{i,\mathrm{st}} + \lambda_{i,\mathcal{I}} \epsilon$ is a dual number.
If we run Algorithm~\ref{alg:init} for $K_{\mathrm{init}}$ iterations satisfying
    \[
    \begin{aligned}
    K_{\mathrm{init}} \ge\ & \log_r\left({\frac{70|\alpha_{2,\bst}|+69M\st}{|\alpha_{1,\bst}|}}\right),
    \end{aligned}
    \]
then, initializing Algorithm \ref{alg:GPM} with $\bx^0:=\Pi(\sqrt{n} \bw^{K_{\mathrm{init}}})$, the resulting sequence
$\{\bx^k\}_{k\ge 0}$ satisfies~\eqref{eq:standard-error-bound} and~\eqref{eq:aligned-dual-error-bound}.
Here \(r:=\bigl|\lambda_{1,\st}/\lambda_{2,\st}\bigr|>1\), 
\(M_{\st} \)  is a real constant depending on $\{\alpha_j,\lambda_j\}_{j=1}^n$, where $\alpha_i$ is the projection of initializer $\bw^0$ onto the $i$-th orthonormal eigenvector.
\end{corollary}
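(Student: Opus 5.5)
The plan is to reduce the corollary to Theorem~\ref{estimation perf}. That theorem is driven by the initializer only through the bound $\operatorname{d}(\bx^0,\hat{\bx})$. Inspecting its proof, what is really needed is an initial accuracy of the form $\operatorname{d}\st(\bx^0,\hat{\bx})\le \sqrt n/25$ (the leading constant in \eqref{eq:standard-error-bound}). Theorem~\ref{lem: init2 error} supplies this for $\widetilde{\bx}$ under $\|\bDelta\|_{\bop,\bst}\le n/350$, since $8/350<1/25$. The slack between $8/350$ and $1/25=14/350$ is what will absorb the power-iteration error.

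\textbf{Step 1: expand the iterate in the eigenbasis.} Write $\bw^0=\sum_j \bu_j\alpha_j$ with an orthonormal eigenbasis of $\bC$. Then $\bC^k\bw^0=\sum_j \bu_j\lambda_j^k\alpha_j$. Because dual numbers commute with dual quaternions, $\lambda_j^k=\lambda_{j,\bst}^k+k\lambda_{j,\bst}^{k-1}\lambda_{j,\bii}\epsilon$. The standard part of $\bw^{k}$ is then the classical power iterate: after normalization, the ratio of the non-dominant to the dominant component is at most $\sum_{j\ge2}|\alpha_{j,\bst}|\,|\lambda_{j,\bst}/\lambda_{1,\bst}|^k/|\alpha_{1,\bst}|$. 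I will bound this by $(|\alpha_{2,\bst}|+M\st)r^{-k}/|\alpha_{1,\bst}|$, where $M\st$ collects the remaining coefficients. The dual-part terms, which carry the factors $k\lambda^{k-1}$, would be folded into $M\st$ or handled separately.

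\textbf{Step 2: convert this to a distance to $\bu_1$.} Using the choice of $K_{\mathrm{init}}$, which gives $r^{-K}\le |\alpha_{1,\bst}|/(70|\alpha_{2,\bst}|+69M\st)$, I will show that $\operatorname{d}\st(\sqrt n\bw^{K},\bu_1)\le c\sqrt n$ for a small explicit constant $c$ (of order $1/70$). The alignment $z$ is taken to be the unit-normalized phase of $\alpha_1$, obtained with $\mathcal N$.

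\textbf{Step 3: combine and project.} The triangle inequality gives $\operatorname{d}(\sqrt n\bw^K,\hat{\bx})\le \operatorname{d}(\sqrt n\bw^K,\bu_1)+4\|\bDelta\|\op/\sqrt n$ by Proposition~\ref{lem: init1 error}. Lemma~\ref{lem: normalization property}, applied componentwise through Proposition~\ref{prop:component-wise proj} with $z$ fixed, then gives $\operatorname{d}(\Pi(\sqrt n\bw^K),\hat{\bx})\le 2c\sqrt n+8\|\bDelta\|\op/\sqrt n\le 2c\sqrt n+8\sqrt n/350$. This is at most $\sqrt n/25$ when $c\le 3/350$. The constants $70$ and $69$ are presumably tuned precisely to make this inequality hold, so I will choose them to fit. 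With the initial accuracy in hand, the induction in the proof of Theorem~\ref{estimation perf} runs unchanged and yields \eqref{eq:standard-error-bound}. The dual-part bound \eqref{eq:aligned-dual-error-bound} is stated in terms of $\delta^0\ii$ of whatever initializer is used, and it is conditioned on the assumptions \eqref{eq:assumption bound translation}--\eqref{eq:gauge-switching-stability}, so it also transfers directly.

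\textbf{Main obstacle.} The delicate step is Step 2: making the power-iteration residual explicit in the dual setting. Normalization by the dual-number $\|\by^k\|_2$ mixes standard and infinitesimal parts, and the alignment must be done in $\mathrm{UDQ}$ rather than merely in the unit quaternions. I would first bound only the standard part, since the hypothesis of Theorem~\ref{estimation perf} on the initializer concerns $\operatorname{d}\st$. I would then argue that the infinitesimal contribution enters only through $\delta^0\ii$, which appears explicitly in the final bound. This avoids needing a dual-part rate for Algorithm~\ref{alg:init} beyond the linear convergence of \citet{cui2024power}.
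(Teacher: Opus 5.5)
Your architecture is the paper's: expand $\bw^0$ in an orthonormal eigenbasis and bound $\operatorname{d}_{\bst}(\bw^{K},\bv_1)$ by a constant $c$ using the aligner $\alpha_1|\alpha_1|^{-1}$. Then combine this with Proposition~\ref{lem: init1 error} and factor-$2$ rounding to land in the basin $\operatorname{d}_{\bst}(\bx^0,\hat\bx)\le\sqrt n/25$, and rerun the induction of Theorem~\ref{estimation perf}. The dual bound transfers because it is written in terms of $\delta^0_{\mathcal I}$. Restricting to standard parts is legitimate: the standard part of $\bw^k$ is exactly the classical normalized power iterate of $\bC_{\bst}$, so the $k\lambda^{k-1}\epsilon$ terms never enter $\operatorname{d}_{\bst}$.

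The step that does not go through is the last line of Step~3. Your chain $\operatorname{d}_{\bst}(\Pi(\sqrt n\bw^K),\hat\bx)\le 2\operatorname{d}_{\bst}(\sqrt n\bw^K,\hat\bx)\le 2c\sqrt n+8\sqrt n/350$ requires $c\le 3/350$. The threshold in the statement instead encodes $c=1/70=5/350$: solving $\frac{(|\alpha_{2,\bst}|+M_{\bst})|\lambda_{2,\bst}|^k}{|\alpha_{1,\bst}||\lambda_{1,\bst}|^k+M_{\bst}|\lambda_{2,\bst}|^k}\le\frac{1}{70}$ is exactly what produces $70|\alpha_{2,\bst}|+69M_{\bst}$. (Your Step~1 form, without $M_{\bst}$ in the denominator, would only guarantee $c\le 1/69$.) With $c=1/70$ you get $2/70+8/350=18/350>1/25$. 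So the constants $70,69$ do not fit your inequality, and they are not yours to choose.

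The paper reaches $70$ through the asymmetric bound $\operatorname{d}_{\bst}(\Pi(\sqrt n\bw^k),\hat\bx)\le 2\operatorname{d}_{\bst}(\sqrt n\bw^k,\sqrt n\bv_1)+\operatorname{d}_{\bst}(\sqrt n\bv_1,\hat\bx)$, which gives $2/70+4/350=1/25$. Lemma~\ref{lem: normalization property}, however, puts the factor $2$ on the whole distance to $\hat\bx$, as in your chain, so the asymmetric version is not justified by it. At $\bw^k=\bv_1$ it would improve the constant $8$ of Theorem~\ref{lem: init2 error} to $4$. Moreover, the factor $2$ in Lemma~\ref{lem: normalization property} is sharp: normalizing a point near the origin on the far side of its target moves it from distance about $1$ to distance $2$.

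So with the available tools, your argument proves the corollary only with an enlarged threshold, e.g.\ $K_{\mathrm{init}}\ge\log_r\bigl((\tfrac{350}{3}|\alpha_{2,\bst}|+\tfrac{347}{3}M_{\bst})/|\alpha_{1,\bst}|\bigr)$. Obtaining the stated $70$ and $69$ would need a genuinely sharper estimate of $\operatorname{d}_{\bst}(\Pi(\sqrt n\bw^K),\hat\bx)$ than Proposition~\ref{lem: init1 error} combined with factor-$2$ rounding.
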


\section{Numerical Experiments}

In this section, we report the recovery performance and numerical efficiency of our proposed DQGPM for SE(3) synchronization tasks on both synthetic and real data, where the real data experiments are implemented on multiple point-set registration problem. We also compare our method with three existing matrix-based methods, which are the spectral decomposition (EIG) described in \citet{arrigoni2016spectral}, the semidefinite relaxation approach (SDR) introduced in \citet{rosen2019se} and the recent spectral relaxation method (SPEC) proposed in \citet{doherty2022performance}. 
All experiments were performed in Python 3.9 on a computer running Windows 11, equipped with an AMD Ryzen 7 6800HS Creator Edition (3.20 GHz) processor and 16 GB of RAM. Experimental results on synthetic data with additive noise are presented in Section \ref{sec: synthe exp}, while Section \ref{sec: real exp} reports results obtained from real data in the multiple point-set registration experiment. The implementation code is available at {\url{https://github.com/jnzhao333/dq_sync}}.

\begin{table*}[t]
    \caption{Cross-sectional views of the aligned point clouds on three real datasets for different methods.}
    \centering
    \renewcommand{\arraystretch}{1.6}
    \setlength{\tabcolsep}{10pt}
    \resizebox{0.92\textwidth}{!}{
    \begin{tabular}{>{\centering\arraybackslash}m{3.2cm}cccc}
        \toprule
        \hspace{0.9cm}{\small Dataset} &{\small DQGPM}& {\small SPEC}& {\small SDR} &{\small EIG} \\
        \midrule
        \begin{tabular}{@{}m{0.22cm}m{2.6cm}@{}}
            \rotatebox[origin=c]{90}{\scriptsize Buddha} &            \hspace{0.45cm}\includegraphics[width=1.2cm]{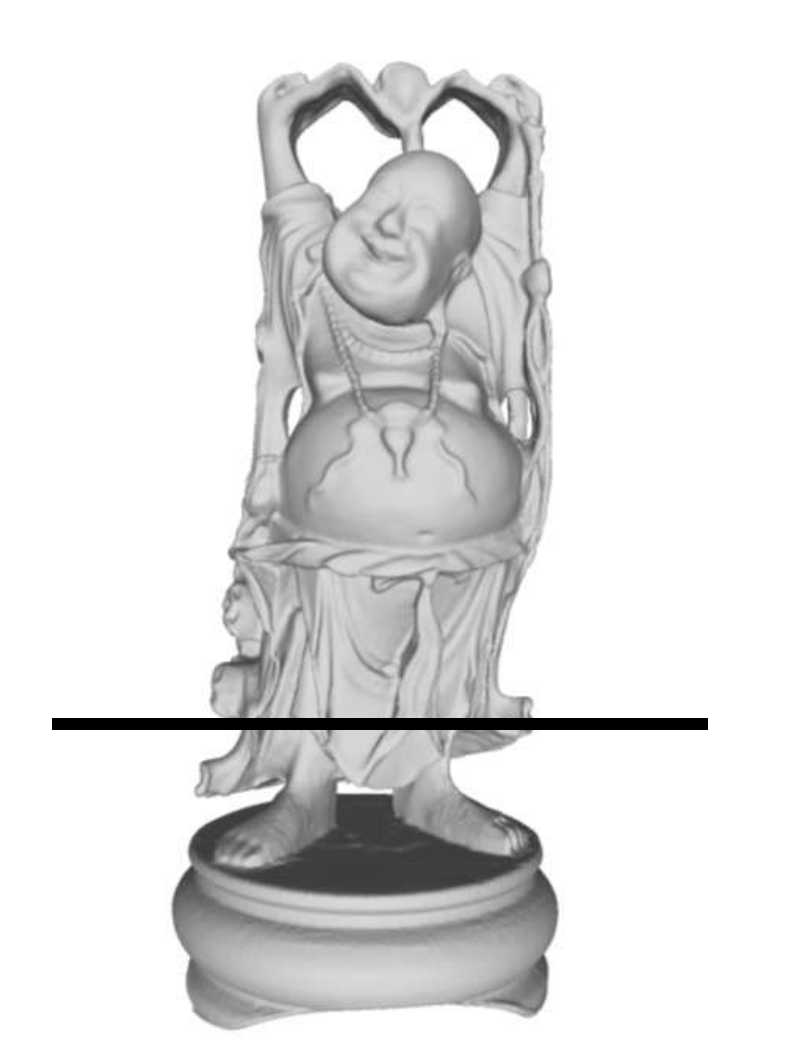}
        \end{tabular} &
        \raisebox{-0.5em}{\includegraphics[width=0.12\textwidth, valign=c]{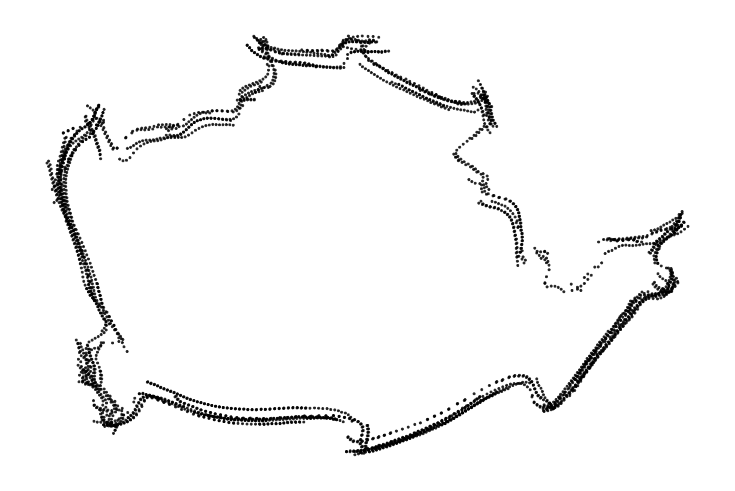}} &
        \raisebox{-0.5em}{\includegraphics[width=0.12\textwidth, valign=c]{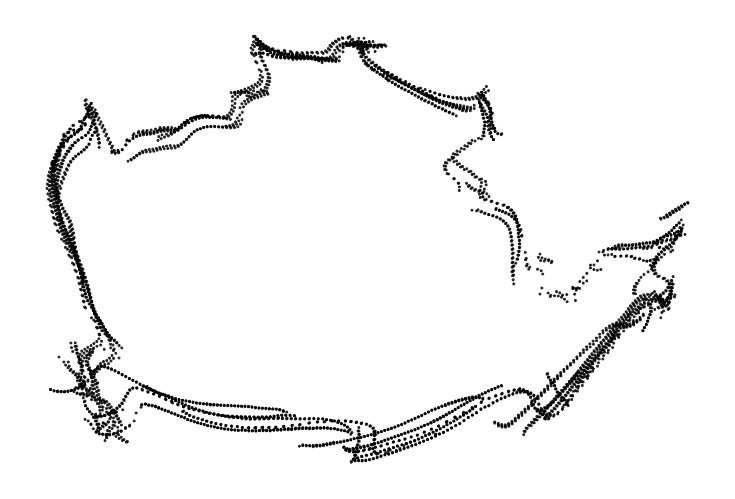}} &
        \raisebox{-0.5em}
        {\includegraphics[width=0.12\textwidth, valign=c]{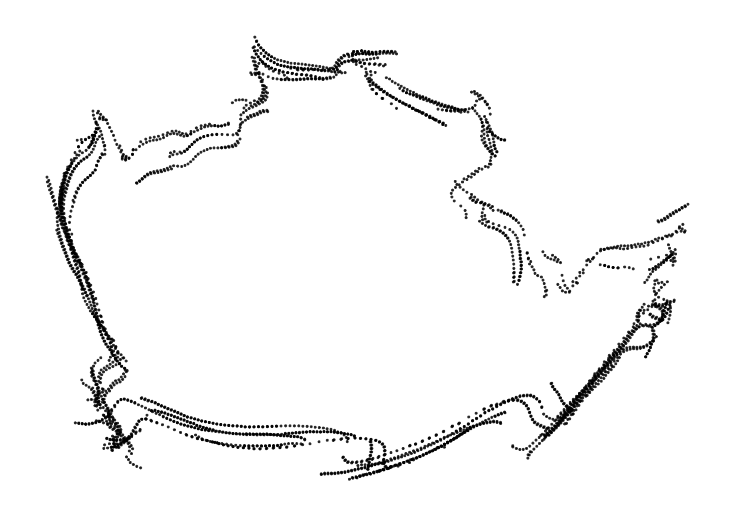}} &
        \raisebox{-0.5em}
        {\includegraphics[width=0.12\textwidth, valign=c]{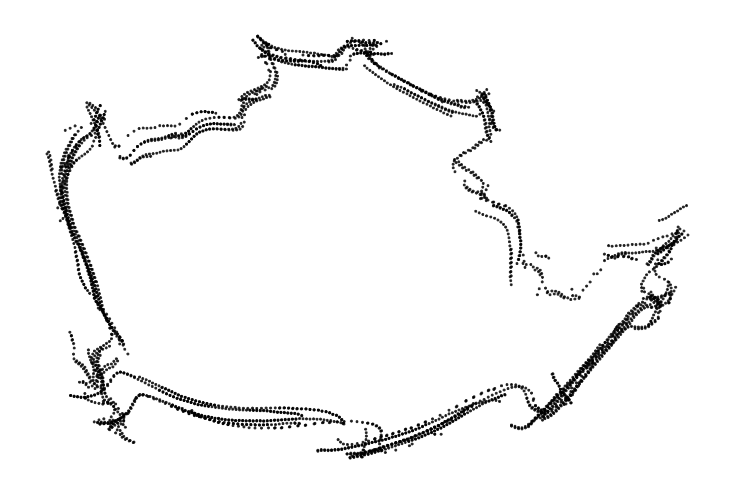}} \\
        \begin{tabular}{@{}m{0.22cm}m{2.6cm}@{}}
            \rotatebox[origin=c]{90}{\scriptsize Dragon} &
            \includegraphics[width=2.2cm]{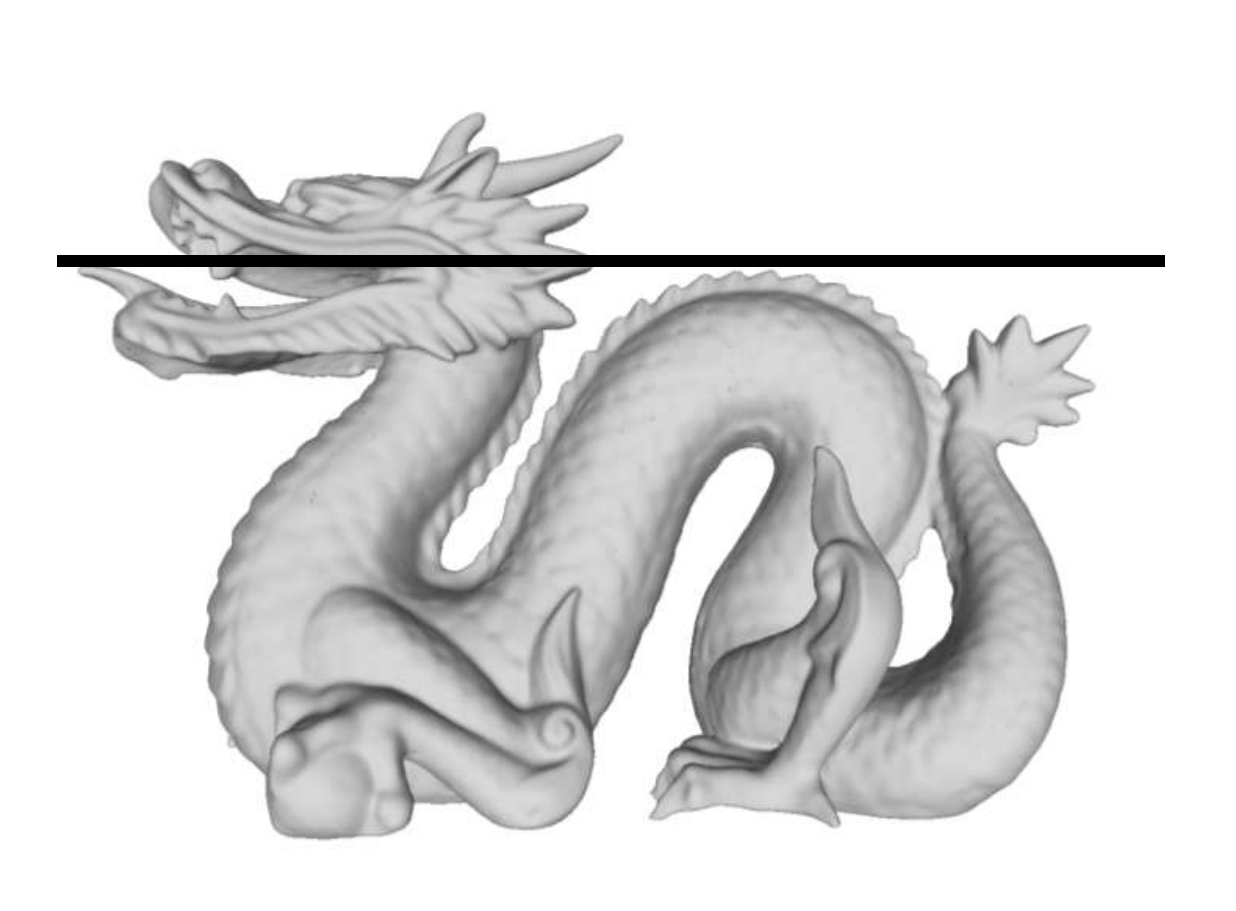}
        \end{tabular} &
        \raisebox{-0.5em}
        {\includegraphics[width=0.2\textwidth, valign=c]{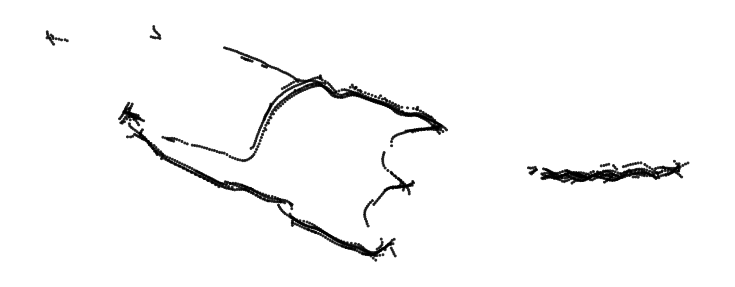}} &
        \hspace{-0.3cm}\raisebox{-0.5em}
        {\includegraphics[width=0.2\textwidth, valign=c]{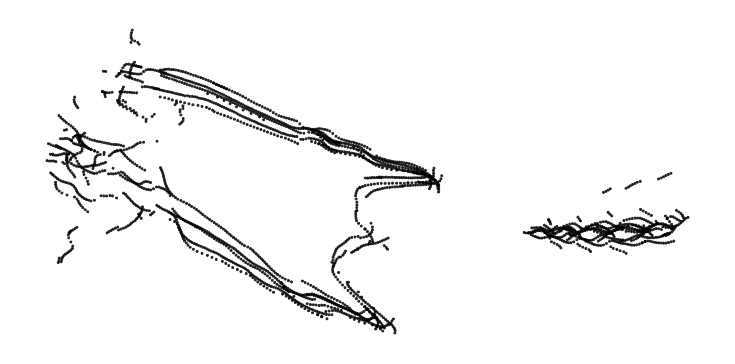}} &
        \hspace{-0.3cm}\raisebox{-0.5em}
        {\includegraphics[width=0.2\textwidth, valign=c]{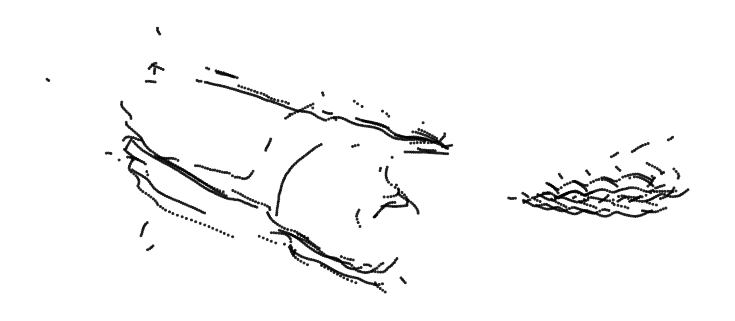}} &
        \hspace{-0.3cm}\raisebox{-0.5em}
        {\includegraphics[width=0.2\textwidth, valign=c]{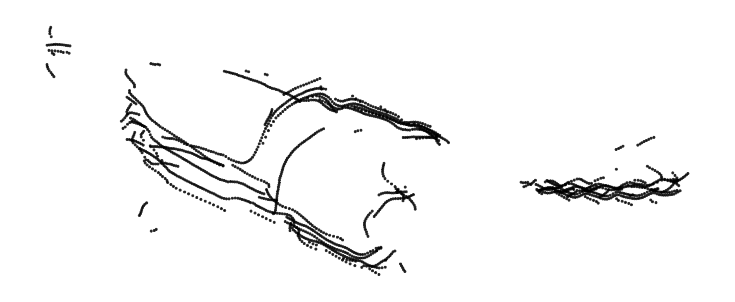}} \\
        \begin{tabular}{@{}m{0.22cm}m{2.6cm}@{}}
            \rotatebox[origin=c]{90}{\scriptsize Armadillo} &
            \hspace{0.1cm}\includegraphics[width=2.0cm]{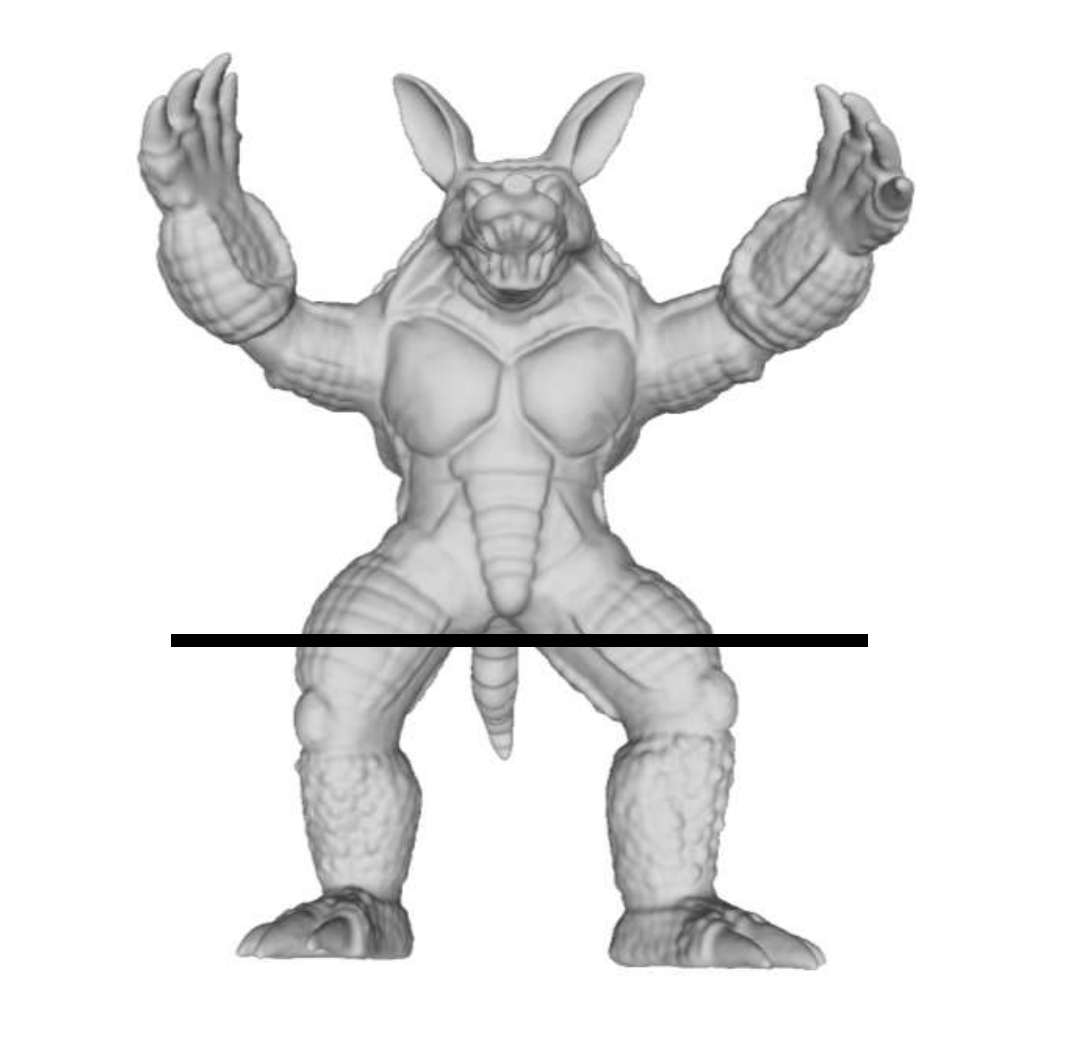}
        \end{tabular} &
        \raisebox{-0.5em}
        {\includegraphics[width=0.12\textwidth, valign=c]{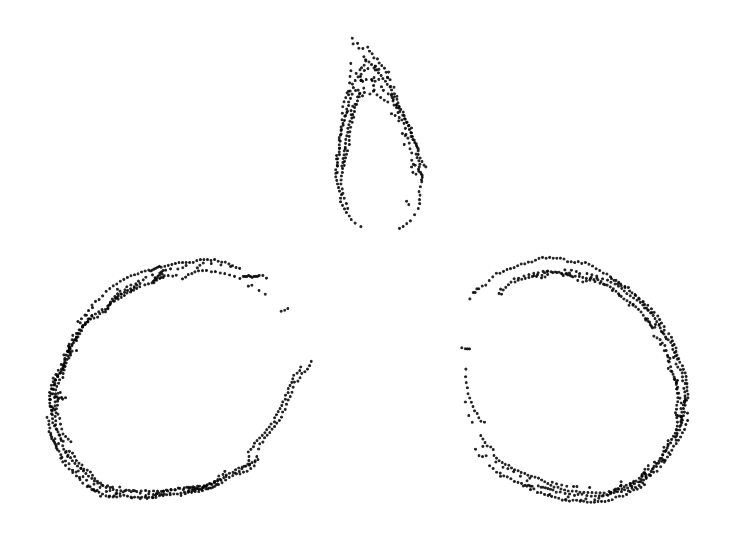}} &
        \raisebox{-0.5em}
        {\includegraphics[width=0.12\textwidth, valign=c]{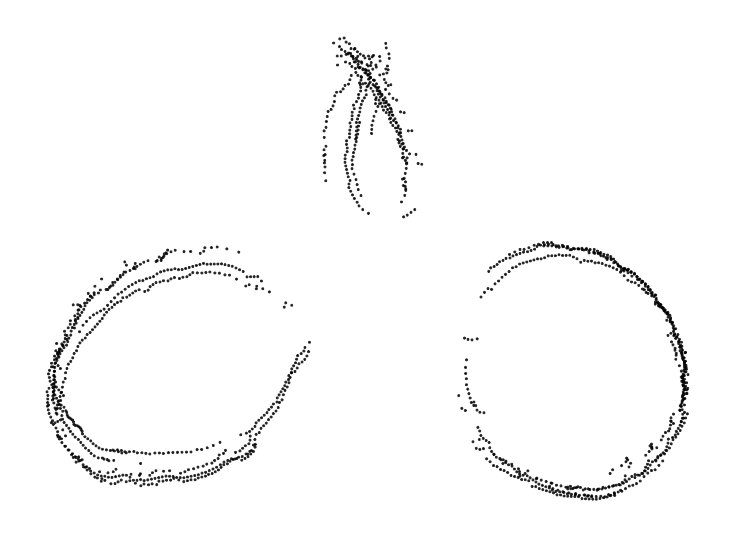}} &
        \raisebox{-0.5em}
        {\includegraphics[width=0.12\textwidth, valign=c]{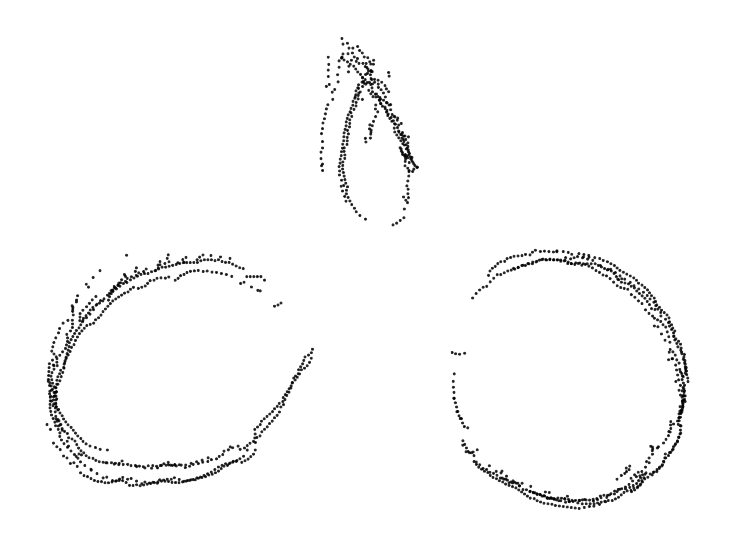}} &
        \raisebox{-0.5em}
        {\includegraphics[width=0.12\textwidth, valign=c]{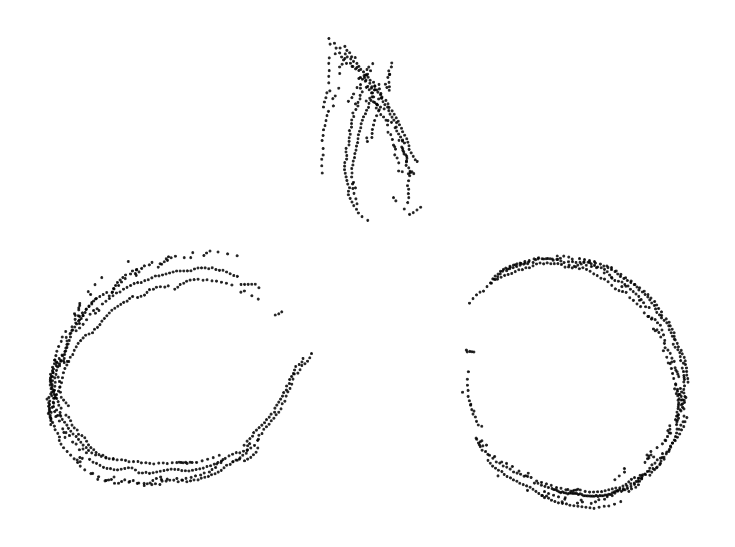}} \\
        \bottomrule
    \end{tabular}
    }
    \label{tab:cross_section}
\end{table*}

\subsection{Data Generation and Evaluation Metrics}\label{sec: exp setup}

\paragraph{Data Generation}
We construct synthetic experiments by simulating a noisy measurement matrix $\bC$ under the additive model
\[\bC = \hat{\bm{x}}\hat{\bm{x}}^* + \bm{\Xi} - \bm{1}_{n\times n}.
\]
Here, $\hat{\bm{x}}=(\hat{x}_1,\ldots,\hat{x}_n)\in \mathrm{UDQ}^n$ is the ground truth pose vector, and term $\bm{\Xi}=(\xi_{ij})\in \mathrm{UDQ}^{n\times n}$ is a Hermitian dual quaternion noise matrix, all their entries are sampled i.i.d.\ as unit dual quaternions.
The matrix $\bm{1}_{n\times n}$ denotes the all-one dual quaternion matrix. The term $\bm{\Xi} - \bm{1}_{n\times n}$ corresponds to the additive noise $\bDelta$ in the model~\eqref{eq:addi_model}.

To evaluate robustness under partial observation, we sparsify the measurements using an Erd\H{o}s--R\'enyi (ER) graph $\bE=(e_{ij})$, where $e_{ij}\sim\mathrm{Bernoulli}(p)$ and $p$ controls the observation rate. The resulting simulated measurement matrix $\bC$ is defined entrywise as
\[
C_{ij}=\left\{
\begin{aligned}
& e_{ij}\bigl(\hat{x}_i \hat{x}_j^*+\xi_{ij}-1\bigr), && 1\le i<j\le n,\\
& 1, && i=j,\\
& C_{ji}^*, && 1\le j<i\le n.
\end{aligned}
\right.
\]

We subtract $\bm{1}_{n\times n}$ for two reasons. First, since $\xi_{ij}$ is a unit dual quaternion, adding it directly to $\hat{x}_i \hat{x}_j^*$ can inflate the entry magnitude and distort the signal. Second, in dual quaternions the motion-free baseline is the identity element (an all-ones matrix), not zero. Therefore, we use the shifted model and subtract $\bm{1}_{n\times n}$.

To represent elements in $\mathrm{SE}(3)$ for both the ground truth poses and the noise terms, we sample $(\bu,\theta,\bm{t})$ to generate a unit dual quaternion $x$. Each sampled dual quaternion $x$ is constructed from a random rotation represented by $q\in \mathbb{H}$ and translation $\bm{t}\in \mathbb{R}^3$: we first form the unit quaternion $q$ from a randomly sampled axis $\bm{u}\in \mathbb{R}^3$ and angle $\theta\in [0,2\pi)$ via~\eqref{eq:aa2q}, and generate a randomly sampled $\bm{t}$ then convert $(q,\bm{t})$ to $x$ using~\eqref{eq:rt2dq}.

\paragraph{Evaluation Metrics}

We follow the evaluation and gauge-alignment procedure of \citet{hadi2024se}.
Define ground truth poses as $\hat{\bm{x}}=\{(\hat q_i,\hat t_i)\}_{i=1}^n\in \mathrm{SE}(3)^n$ and estimates as $\bm{x}=\{(q_i,t_i)\}_{i=1}^n\in \mathrm{SE}(3)^n$, where $\hat q_i, q_i\in\mathbb{H}$
are unit quaternions and $\hat t_i, t_i\in\mathbb{R}^3$ are translations.
We compute the estimation error as the distance between ${\bm{x}}$ and the right-aligned ground truth $\hat{\bm{x}}z$, where $z$ is the optimal aligner that minimizes the distance between $\bm{x}$ and $\hat{\bm{x}}z$.
Specifically, we get the aligning SE(3) element $z=(q,t)$ by
\begin{equation*}
q=\frac{s}{\lVert s\rVert},
\quad 
t=\frac{1}{n}\sum_{j=1}^n \psi_{\hat{q}_j^{*}}(t_j-\hat t_j),
\label{eq:gauge_align}
\end{equation*}
where $s=\sum_{j=1}^n \hat{q}_j^{*}\, {q}_j,
$ and $\psi:\mathbb{R}^3\to\mathbb{R}^3$ denotes the action of the quaternion-induced rotation on vectors (see Proposition~\ref{prop:q-SO3} in the Appendix).

After alignment, we report rotation and translation errors for each estimated pose,
following \citet{hadi2024se}:
\begin{equation*}
\begin{aligned}
\operatorname{d}_R(q_1,q_2) &= 2\arccos\bigl(2\langle q_1,q_2\rangle^2 - 1\bigr),\\
\operatorname{d}_T(t_1,t_2) &= \lVert t_1 - t_2\rVert_2,
\end{aligned}
\label{eq:eval_metrics}
\end{equation*}
where $\langle\cdot,\cdot\rangle$ denotes the Euclidean inner product in $\mathbb{R}^4$. We report \texttt{error\_r} and \texttt{error\_t}
as the mean of $\operatorname{d}_R$ and $\operatorname{d}_T$, respectively, over all $n$ targets.

\begin{table*}[t]
\caption{Performance on four real datasets under $(\sigma_t,\sigma_r)=(0.01,10^{\circ})$: missing rate, time, and errors.}
\label{table:all_real}
\begin{center}
\begin{small}
\begin{sc}
\setlength{\tabcolsep}{6pt}        
\renewcommand{\arraystretch}{1.35} 
\resizebox{0.75\textwidth}{!}{
\begin{tabular}{c l c c c c c c c}
\hline
 &  & \multicolumn{1}{c}{Bunny} & \multicolumn{2}{c}{Buddha} & \multicolumn{2}{c}{Dragon} & \multicolumn{2}{c}{Armadillo} \\
 &  & \multicolumn{1}{c}{Sparse} & \multicolumn{1}{c}{Dense} & \multicolumn{1}{c}{Sparse} & \multicolumn{1}{c}{Dense} & \multicolumn{1}{c}{Sparse} & \multicolumn{1}{c}{Dense} & \multicolumn{1}{c}{Sparse} \\
\hline
\multicolumn{2}{c}{Missing}
& 48.00\% & 18.67\% & 66.67\% & 19.44\% & 60.44\% & 19.44\% & 58.33\% \\
\hline
\multirow{4}{*}{Time}
& DQGPM & 0.0010 & \textbf{0.0003} & 0.0021 & \textbf{0.0003} & 0.0014 & \textbf{0.0003} & 0.0009 \\
& SPEC  & 0.0018 & 0.0043 & 0.0041 & 0.0043 & 0.0042 & 0.0028 & 0.0028 \\
& SDR   & 0.0954 & 0.2166 & 0.1893 & 0.2306 & 0.1964 & 0.2410 & 0.1907 \\
& EIG   & \textbf{0.0006} & 0.0008 & \textbf{0.0007} & 0.0009 & \textbf{0.0007} & 0.0007 & \textbf{0.0006} \\
\hline
\multirow{4}{*}{Error\_r}
& DQGPM & \textbf{0.1631} & \textbf{0.0898} & \textbf{0.2118} & \textbf{0.0935} & \textbf{0.1648} & \textbf{0.1095} & \textbf{0.1761} \\
& SPEC  & 3.4489 & 0.1826 & 3.1018 & 0.2404 & 3.1359 & 0.5723 & 3.1249 \\
& SDR   & 3.6349 & 0.1937 & 3.1181 & 0.2843 & 3.1320 & 0.5757 & 3.1236 \\
& EIG   & 3.4460 & 0.1736 & 3.0993 & 0.2623 & 3.1349 & 0.5785 & 3.1260 \\
\hline
\multirow{4}{*}{Error\_t}
& DQGPM & \textbf{0.0107} & 0.0044 & 0.0117 & 0.0047 & 0.0085 & 0.0056 & 0.0088 \\
& SPEC  & 0.0656 & \textbf{0.0038} & \textbf{0.0021} & \textbf{0.0038} & \textbf{0.0024} & \textbf{0.0040} & \textbf{0.0026} \\
& SDR   & 0.0675 & 0.0042 & 0.0021 & 0.0042 & 0.0024 & 0.0043 & 0.0027 \\
& EIG   & 0.0509 & 0.0039 & 0.0021 & 0.0039 & 0.0024 & 0.0042 & 0.0026 \\
\hline
\end{tabular}
}
\end{sc}
\end{small}
\end{center}
\end{table*}

\subsection{Recovery Performance and Computational Time}\label{sec: synthe exp}

For each ground truth pose $\hat{x}_i$, we sample the rotation axis $\bu$ uniformly from the unit sphere in $\mathbb{R}^3$ and the rotation angle independently and uniformly from $[0,2\pi)$. The translation $\bm{t}=(t_1,t_2,t_3)\in\mathbb{R}^3$ has i.i.d.\ entries following $\mathcal{N}(0,1)$. 
To generate the noise matrix, we again sample rotation axes uniformly from the unit sphere, while rotation angles (in degrees) are drawn i.i.d.\ from $\mathcal{N}(0,\sigma_r^2)$ with $\sigma_r\in\{5^\circ, 10^\circ, 15^\circ, 20^\circ\}$. The corresponding translation noise has i.i.d.\ entries drawn from $\mathcal{N}(0,\sigma_t^2)$ with $\sigma_t\in \{0.05, 0.10,0.15,0.2\}$, respectively.

We run synthetic experiments with problem size $n=100$ under three observation rates $p\in\{0.05,0.3\}$, corresponding to approximately $95\%$ and $70\%$ missing pairs, respectively. For each setting, we execute Algorithm~\ref{alg:init} until the residual drops below $10^{-5}$, and then project the output elementwise onto $\mathrm{UDQ}^n$.
All results are averaged over 100 independent trials. 

\begin{table}[h]
\caption{CPU time of DQGPM, SPEC and EIG across noise levels and observation rates.}
\label{table:all_rt_time}
\begin{center}
\begin{small}
\begin{sc}
\setlength{\tabcolsep}{3.5pt}        
\renewcommand{\arraystretch}{1.35} 
\resizebox{\columnwidth}{!}{
\begin{tabular}{c l c c c c}
        \hline
        &  & \((0.05,5^\circ)\) & \((0.10,10^\circ)\) & \((0.15,15^\circ)\) & \((0.20,20^\circ)\) \\
        \hline
        \multirow{3}{*}{\(p=0.05\)} & DQGPM & \textbf{0.005} & \textbf{0.005} & \textbf{0.005} & \textbf{0.004} \\
                                    & SPEC  & 0.084 & 0.089 & 0.069 & 0.069 \\
                                    & EIG   & 0.006 & 0.005 & 0.005 & 0.004 \\
        \hline
        \multirow{3}{*}{\(p=0.30\)} & DQGPM & \textbf{0.002} & \textbf{0.001} & \textbf{0.001} & \textbf{0.001} \\
                                    & SPEC  & 0.143 & 0.140 & 0.140 & 0.142 \\
                                    & EIG   & 0.003 & 0.004 & 0.004 & 0.004 \\
        \hline
\end{tabular}}
\end{sc}
\end{small}
\end{center}
\end{table}

Table \ref{table:all_rt_error} reports the rotation and translation errors (\texttt{error\_r}, \texttt{error\_t}) of DQGPM and the baselines SPEC and EIG across various noise levels and observation rates. The two metrics exhibit consistent trends, and DQGPM is uniformly more accurate and stable, with the largest gains in sparse graphs ($p=0.05$). DQGPM maintains small errors across all noise levels, whereas EIG incurs larger errors with substantially higher variance. As the graph becomes denser ($p=0.30$), both methods improve and the performance gap narrows; nonetheless, DQGPM continues to achieve lower errors and reduced variance, retaining a mild but consistent advantage in the dense regime. Besides, we record the total CPU time consumed by each method in Table \ref{table:all_rt_time}. It can be observed that DQGPM is consistently faster than EIG across all settings, and its runtime is insensitive to the noise level.

Figure \ref{fig:iter_error} plots estimation error (\texttt{error\_r} and \texttt{error\_t}) versus iteration for different problem sizes $n$, 
indicating a linear error contraction of estimation error for GPM refinement stage. Within a small number of iterations, the decrease gradually saturates and the errors flatten at a nonzero plateau, which corresponds to the asymptotic noise floor as analyzed in Remark \ref{rem:gpm_refinement}. 

\begin{figure}[h]
\captionsetup[subfigure]{font=scriptsize,labelfont=rm}   
    \centering
    \begin{subfigure}[b]{0.24\textwidth}
        \centering
        \includegraphics[width=\linewidth]{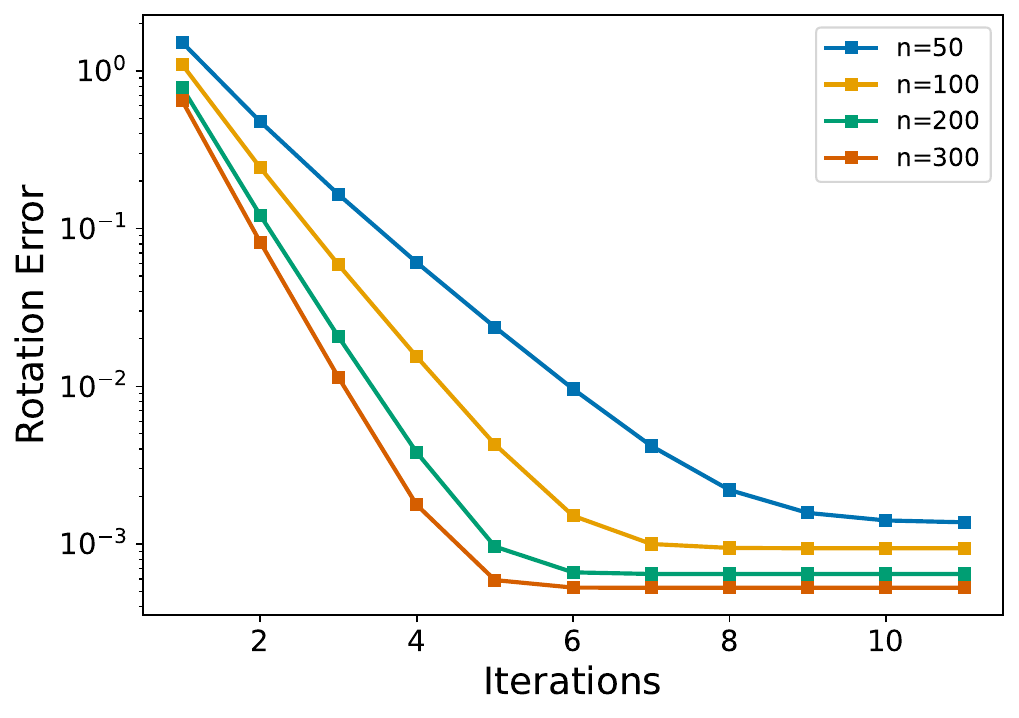}
                \caption{Rotation error}
        \label{fig:rerror_vs_iteration}
    \end{subfigure}
    \hfill
    \hspace{-2em}
    \begin{subfigure}[b]{0.24\textwidth}
        \centering
        \includegraphics[width=\linewidth]{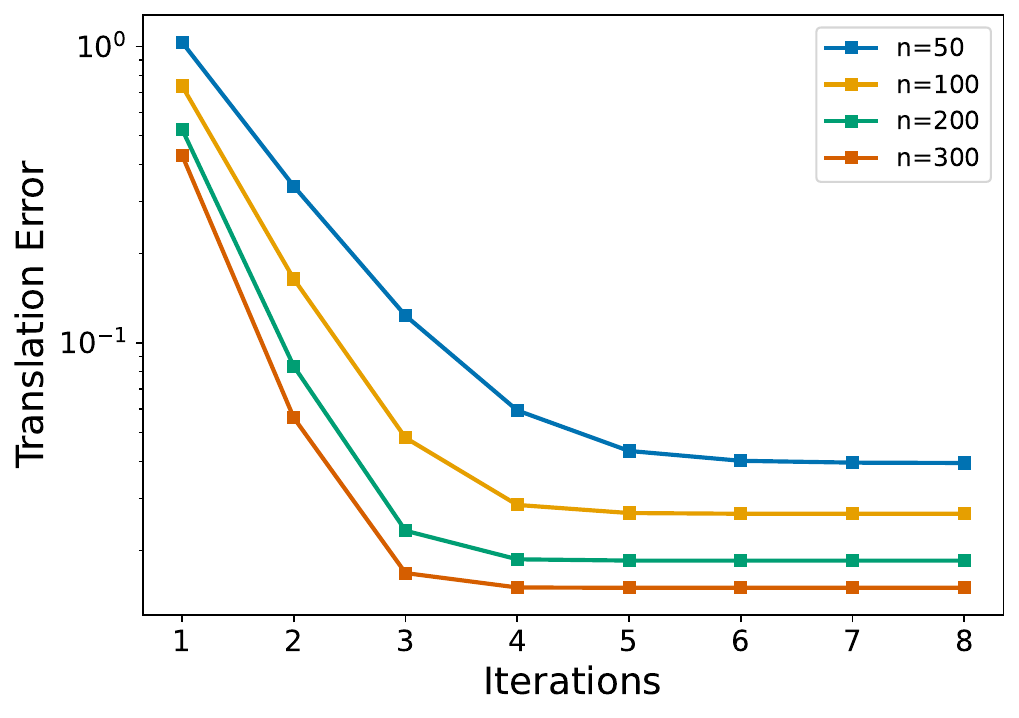}
        \caption{Translation error}
        \label{fig:terror_vs_iteration}
    \end{subfigure}
    \caption{Error decay over iterations: (a) rotation error, (b) translation error, with $(\sigma_t,\sigma_r) = (0.1,10^{\circ})$ and $p=0.3$.}
    \label{fig:iter_error}
\end{figure}

\subsection{Real Data: Stanford Bunny and 3DMatch}\label{sec: real exp}

We evaluate our method on real data from the Stanford 3D Scanning Repository\footnote{\url{http://graphics.stanford.edu/data/3Dscanrep}} and 3DMatch\footnote{\url{https://3dmatch.cs.princeton.edu/\#geometric-registration-benchmark}} dataset, and compare it against EIG, SDR and SPEC methods. To construct the measurement matrix, we first use Iterative Closest Point (ICP) algorithm \citep{besl1992method} to estimate relative rigid motions, on which we apply additional rigid-motion perturbation. Specifically, the rotation angle (in degrees) is sampled from $\mathcal{N}(0,10^2)$, the rotation axis is drawn uniformly from the unit sphere, and the translation vector (in meters) has i.i.d.\ entries following $\mathcal{N}(0,0.01^2)$, consistent with the scale of translations in the datasets. All results are averaged over 100 independent trials.

\begin{figure}[h]
\captionsetup[subfigure]{font=scriptsize,labelfont=rm}  
    \centering
    \begin{subfigure}[b]{0.12\textwidth}
        \centering
        \includegraphics[width=\linewidth]{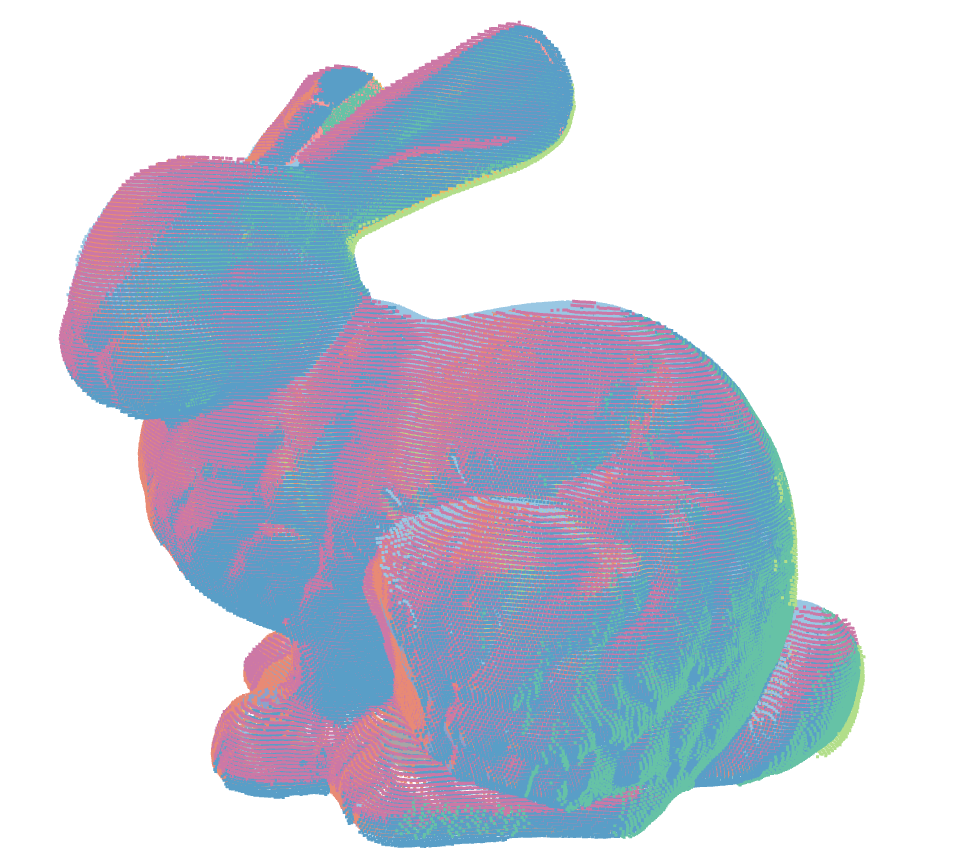}
        \caption{Bunny}
        \label{fig:bunny3D_gpm}
    \end{subfigure}\hfill
    \begin{subfigure}[b]{0.07\textwidth}
        \centering
        \includegraphics[width=\linewidth]{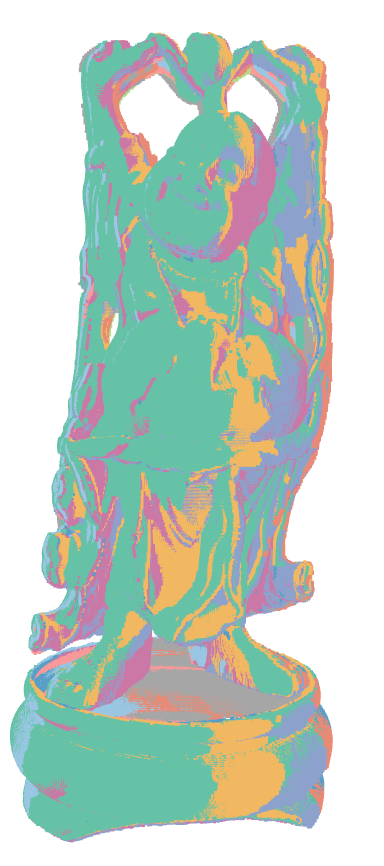}
        \caption{Buddha}
        \label{fig:buddha_3D_gpm}
    \end{subfigure}\hfill
    \begin{subfigure}[b]{0.13\textwidth}
        \centering
        \includegraphics[width=\linewidth]{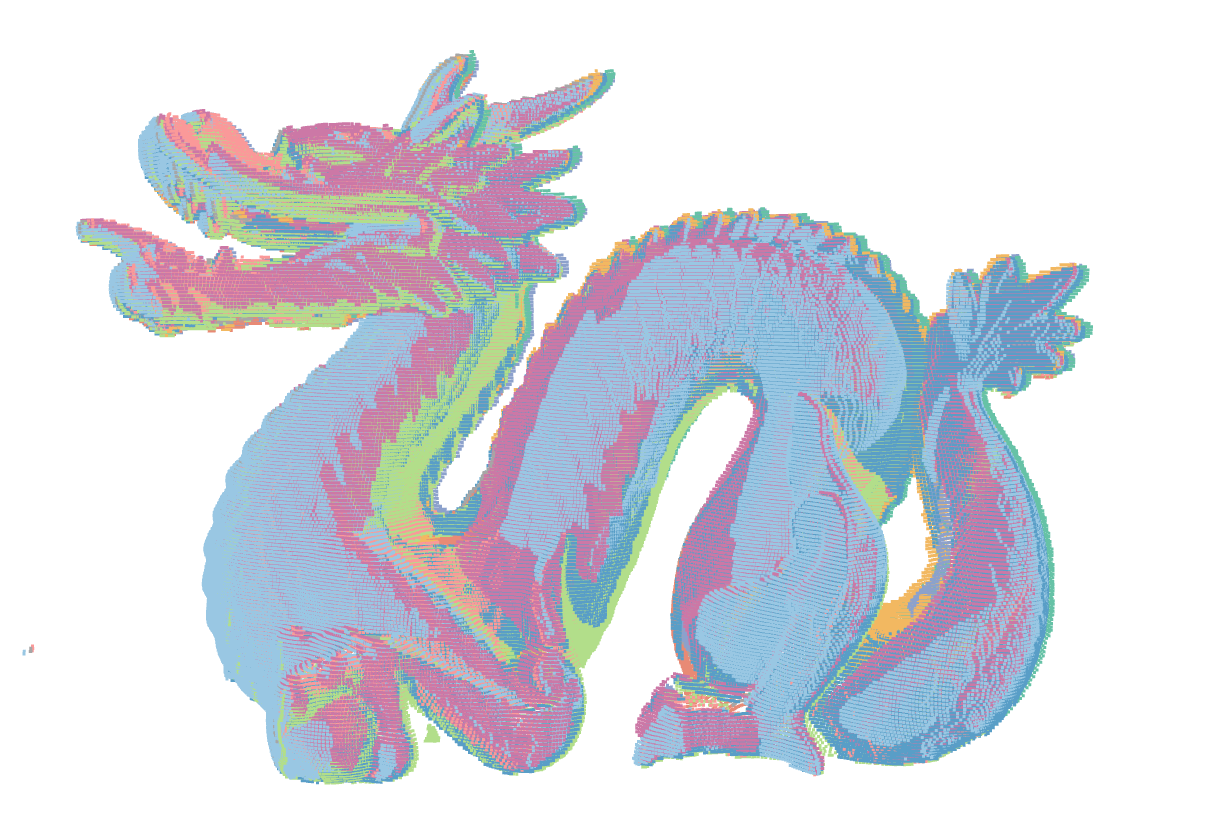}
        \caption{Dragon}
        \label{fig:dragon3D_gpm}
    \end{subfigure}\hfill
    \begin{subfigure}[b]{0.11\textwidth}
        \centering
        \includegraphics[width=\linewidth]{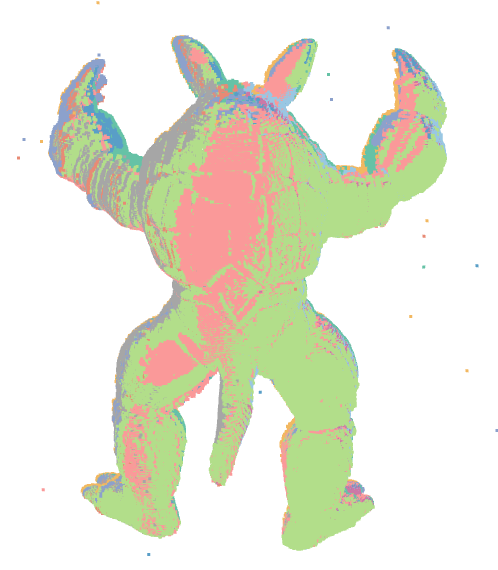}
        \caption{Armadillo}
        \label{fig:arma3D_gpm}
    \end{subfigure}
    \caption{3D reconstructions obtained by DQGPM on four real datasets. Colors indicate individual point clouds.}
    \label{fig:3D_gpm}
\end{figure}

Four datasets are considered in our real-data experiments: bunny, buddha, dragon, and armadillo. Figure \ref{fig:3D_gpm} visualizes the shapes after applying DQGPM to align and merge the corresponding point clouds.
To evaluate robustness under sparse observations, we construct two types of ICP-based measurement matrices for the last three datasets. The sparse version is obtained by running ICP without any initialization, whereas the dense version uses an initialization in which the relative orientation is inferred from the point-cloud file names. The bunny dataset only admits the sparse construction, since its file names do not encode relative orientations and therefore do not provide a meaningful ICP initializer.

\begin{figure}[htbp]
    \centering
    \includegraphics[width=0.48\textwidth, height=0.23\textwidth]{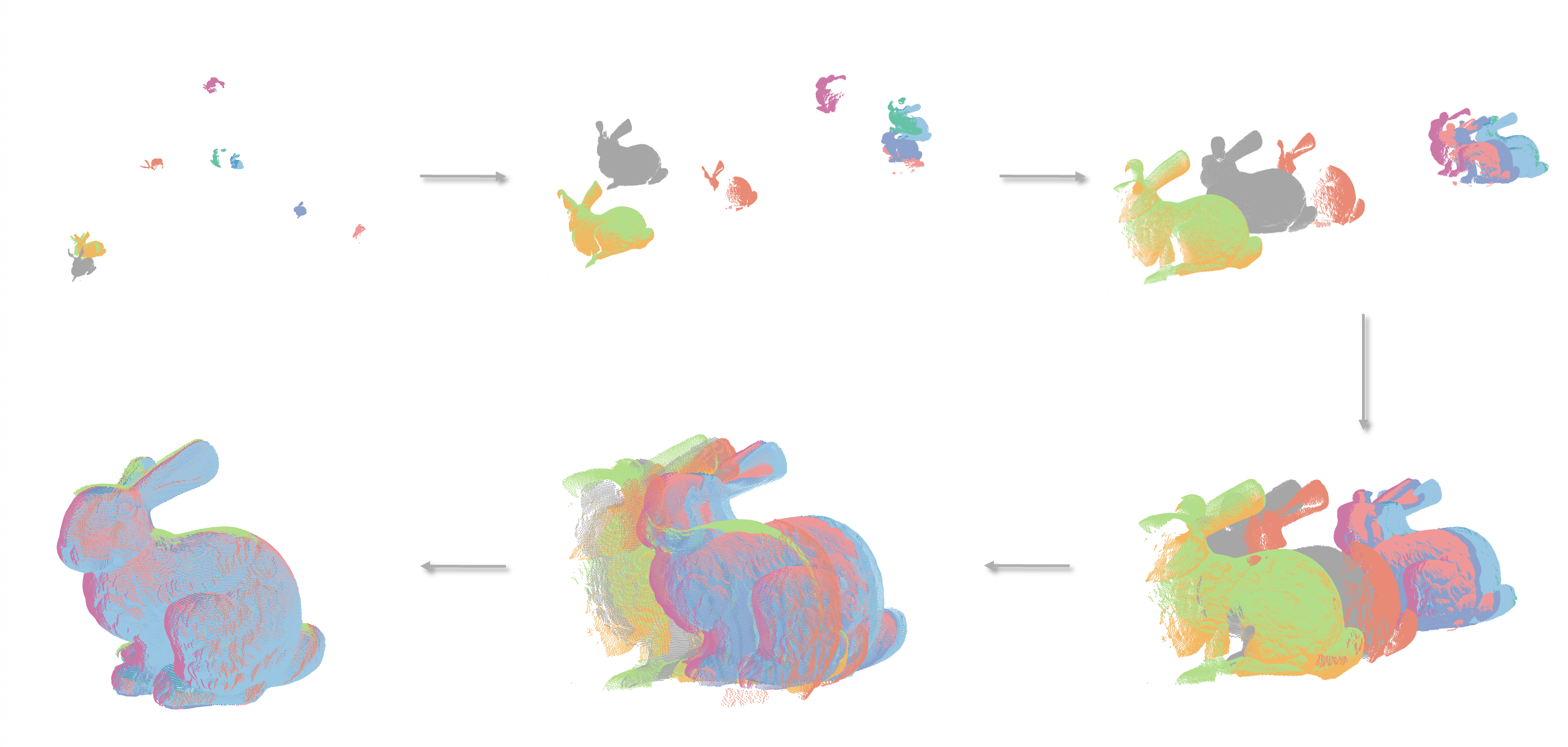}
    \caption{Point-cloud alignment over DQGPM iterations.}
    \label{fig:bunny_iter}
\end{figure}

\begin{table*}[t]
\caption{Performance on the 3DMatch dataset.}
\label{tab:3DMatch}
\begin{center}
\begin{small}
\begin{sc}
\setlength{\tabcolsep}{6pt}
\renewcommand{\arraystretch}{1.35}
\resizebox{0.8\textwidth}{!}{
\begin{tabular}{c l c c c c c c}
\hline
 &  & Office1 & Office2 & Livingroom1 & Livingroom2 & Redkitchen & Hotel3 \\
\hline
\multicolumn{2}{c}{No. Nodes}
& 53 & 50 & 57 & 47 & 60 & 37 \\
\multicolumn{2}{c}{Missing}
& 81.67\% & 83.36\% & 78.12\% & 79.86\% & 70.22\% & 89.41\% \\
\hline
\multirow{4}{*}{Error\_r}
& DQGPM & \textbf{0.1209} & \textbf{0.0066} & \textbf{7.78e-7} & \textbf{9.00e-6} & \textbf{2.89e-5} & \textbf{3.96e-7} \\
& SPEC  & 1.5195 & 1.7705 & 1.3342 & 2.0904 & 1.0184 & 1.2469 \\
& SDR   & 1.5038 & 1.7616 & 1.3195 & 2.0946 & 1.0082 & 1.2460 \\
& EIG   & 1.5093 & 1.7661 & 1.3238 & 2.0951 & 1.0128 & 1.2469 \\
\hline
\multirow{4}{*}{Error\_t}
& DQGPM & \textbf{0.2454} & \textbf{0.0155} & \textbf{3.12e-6} & \textbf{1.06e-5} & \textbf{4.71e-5} & \textbf{3.87e-7} \\
& SPEC  & 1.8254 & 1.5662 & 1.7862 & 1.6364 & 0.8466 & 0.3994 \\
& SDR   & 2.4180 & 2.0358 & 2.2108 & 1.7699 & 1.1597 & 0.4336 \\
& EIG   & 1.8860 & 1.6429 & 1.7988 & 1.8343 & 0.8658 & 0.4029 \\
\hline
\end{tabular}
}
\end{sc}
\end{small}
\end{center}
\end{table*}

Table \ref{tab:cross_section} visualizes cross-sectional slices of the registered point clouds produced by different methods. Better alignment manifests as sharper, cleaner contours with fewer ghosting artifacts and outliers.
Figure \ref{fig:bunny_iter} shows the evolution of the alignment over DQGPM iterations, where the misregistration decreases monotonically and the point clouds gradually collapse to a consistent configuration.
The results for estimation error and CPU time are summarized in Table~\ref{table:all_real}.
Overall, DQGPM attains comparable accuracy with substantially lower runtime than SDR, while maintaining strong performance even when a large fraction of measurements is missing.

In addition to the Stanford object datasets, we evaluate our method on real indoor scenes from the 3DMatch dataset. Different from the ICP-based construction above, here we directly use the sparse pairwise rigid motions provided in \texttt{gt.log} to build the measurement matrix $C$, and no additional perturbation is added. For each scene, let $G=(V,E)$ denote the graph of point-cloud fragments, where each observed edge $(i,j)\in E$ is associated with a reference relative transformation $\widehat{X}_{ij}\in \mathrm{SE}(3)$. Given the estimated absolute poses $\{X_i\}_{i=1}^n$, we reconstruct the relative motion on each observed edge as
$
X_{ij}=X_iX_j^{-1}$,
or equivalently $x_{ij} = x_i x_j^*$ in the dual quaternion representation. Since ground truth is available only on the observed edges, we evaluate directly in the edge space and report
\[
\begin{aligned}
&\mathrm{error}_r=\frac{1}{|E|}\sum_{(i,j)\in E} \operatorname{d}_R(q_{ij},\widehat{q}_{ij}),\\
&\mathrm{error}_t=\frac{1}{|E|}\sum_{(i,j)\in E} \operatorname{d}_T(t_{ij},\widehat{t}_{ij}),
\end{aligned}
\]
where $(q_{ij},t_{ij})$ and $(\widehat{q}_{ij},\widehat{t}_{ij})$ are extracted from $X_{ij}$ and $\widehat{X}_{ij}$, respectively, and $\operatorname{d}_R,\operatorname{d}_T$ are defined as in Section \ref{sec: exp setup}. The results are summarized in Table~\ref{tab:3DMatch}. The 3D reconstructions in Figure~\ref{fig:3DMatch1} and \ref{fig:3DMatch2} show that DQGPM produces more accurate and visually consistent alignments than SPEC, SDR, and EIG under highly sparse observations.

\begin{figure}[h]
\captionsetup[subfigure]{font=scriptsize,labelfont=rm}  
    \centering
    \begin{subfigure}[b]{0.18\textwidth}
        \centering
        \includegraphics[width=\linewidth]{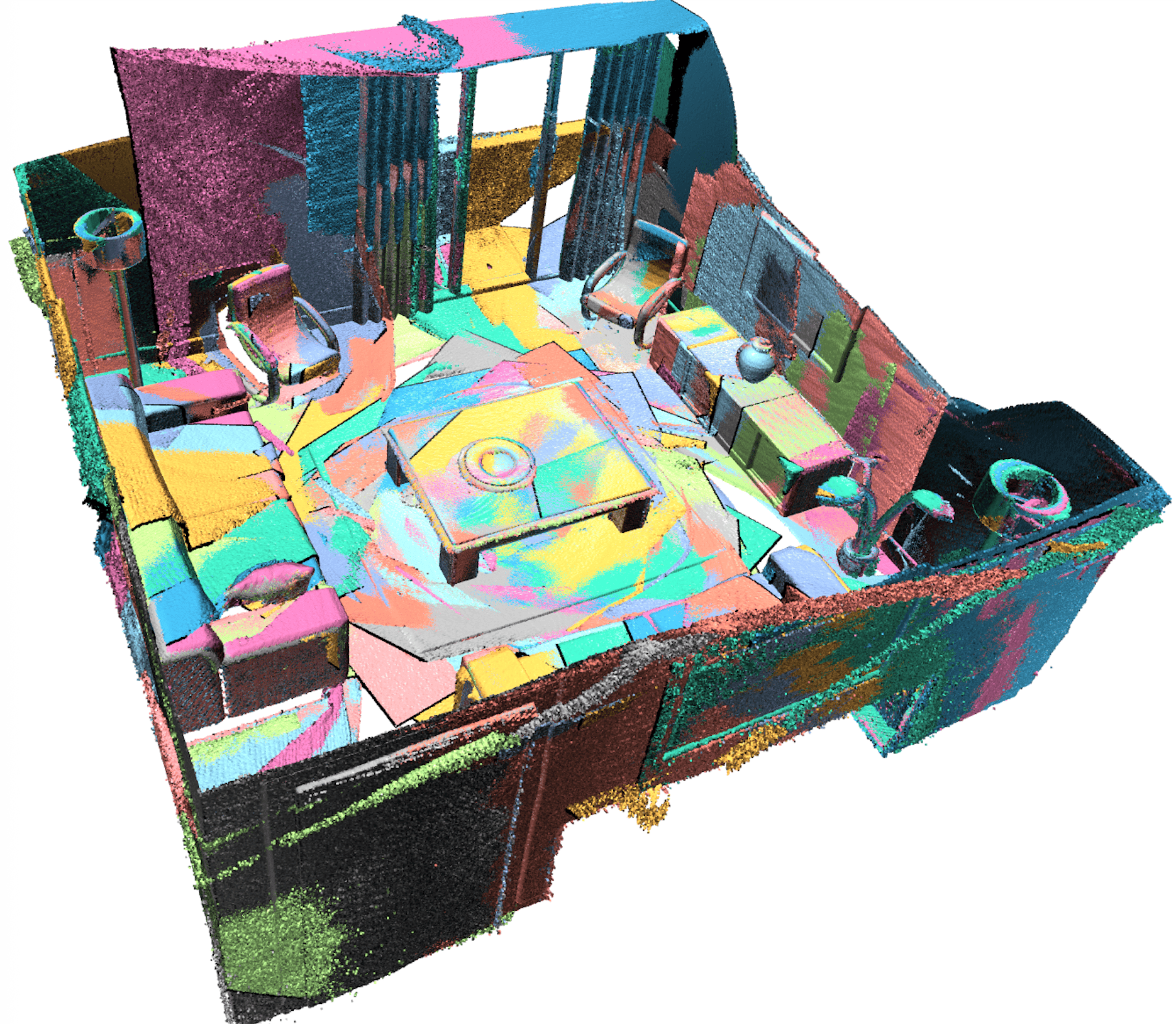}
        \caption{DQGPM}
        \label{fig:livingroom2_gpm}
    \end{subfigure}\qquad
    \begin{subfigure}[b]{0.18\textwidth}
        \centering
        \includegraphics[width=\linewidth]{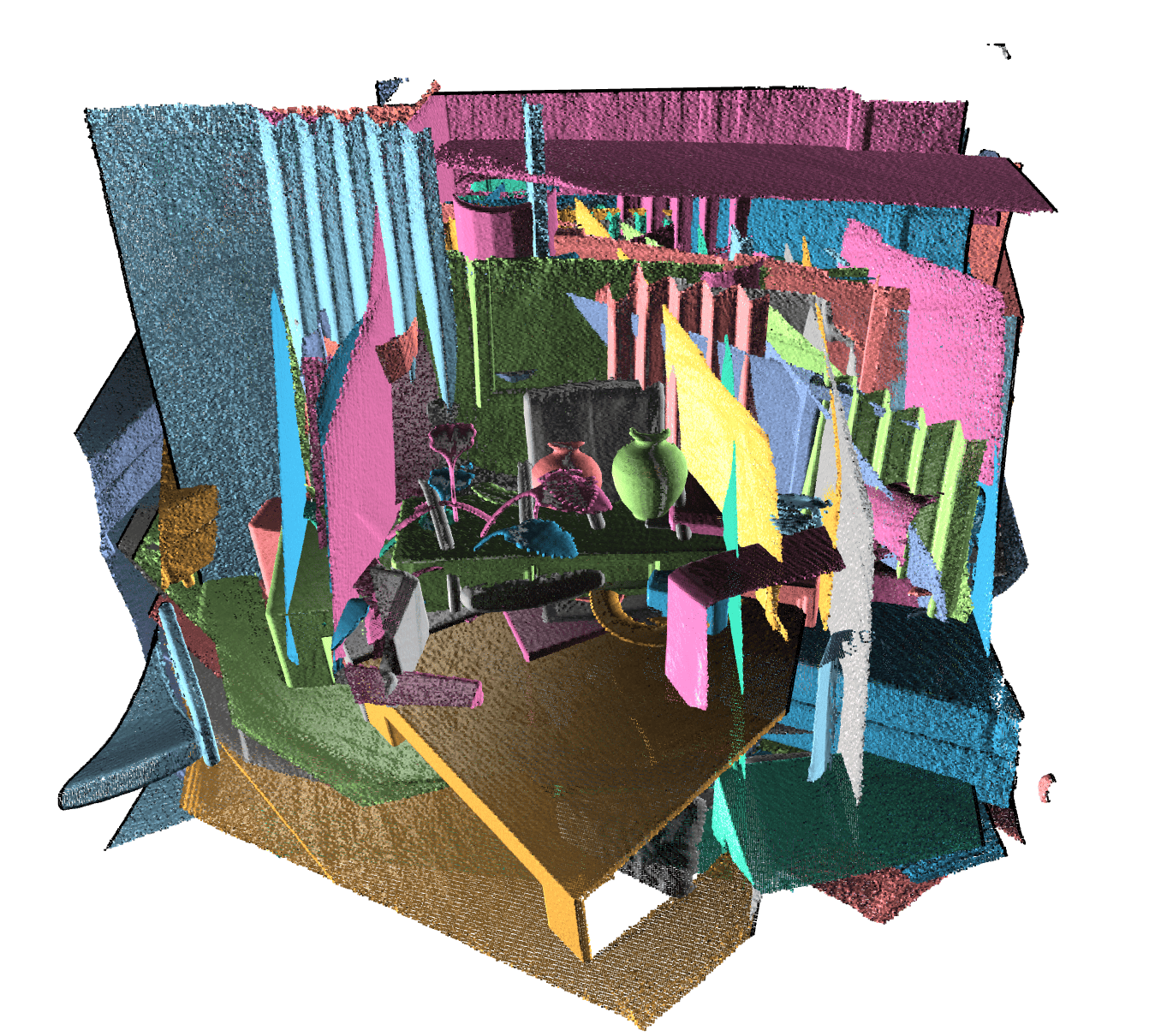}
        \caption{SPEC}
        \label{fig:livingroom2_doherty}
    \end{subfigure}\hfill
    \begin{subfigure}[b]{0.18\textwidth}
        \centering
        \includegraphics[width=\linewidth]{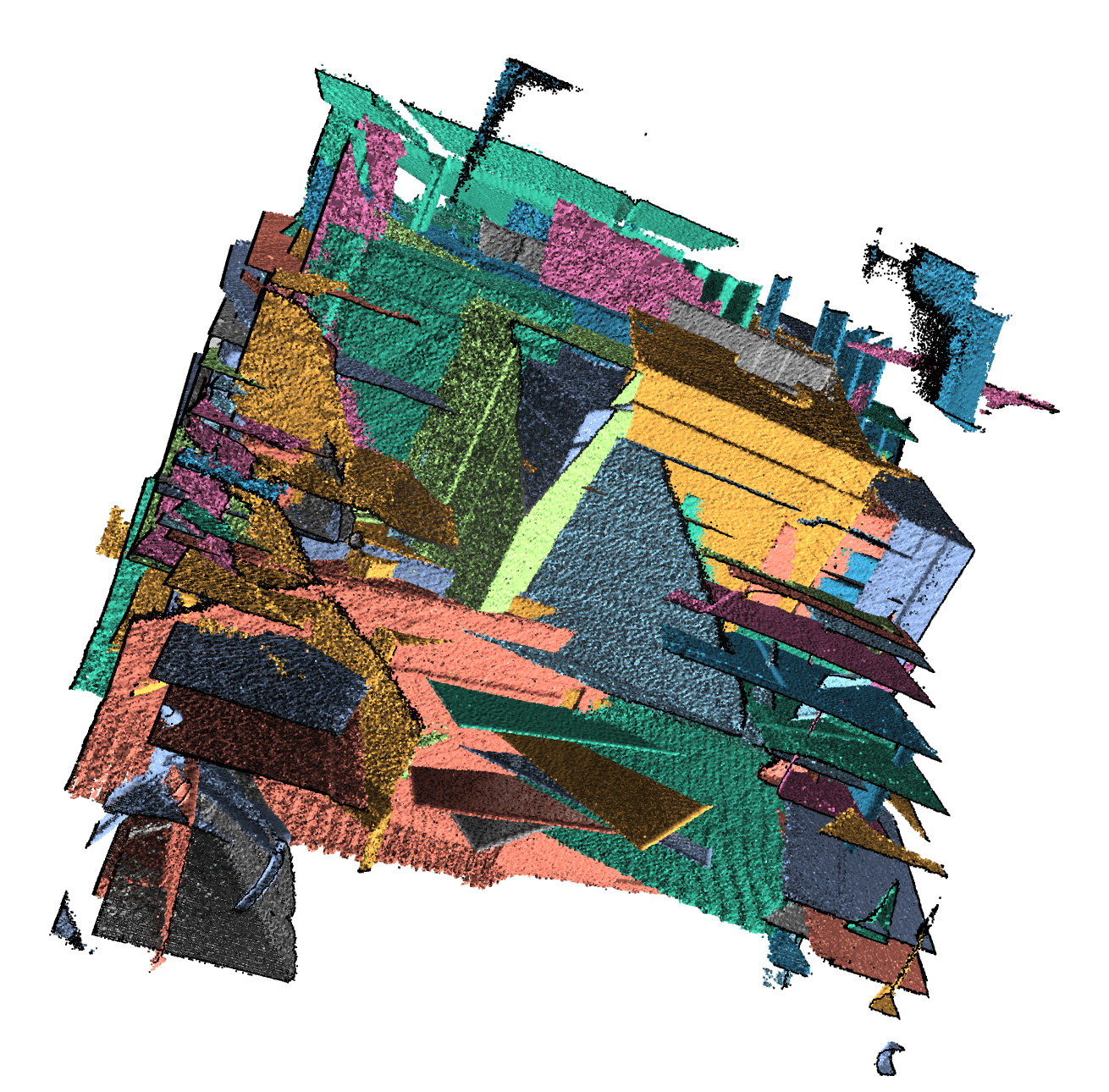}
        \caption{SDR}
        \label{fig:livingroom2_rosen}
    \end{subfigure}\qquad
    \begin{subfigure}[b]{0.18\textwidth}
        \centering
        \includegraphics[width=\linewidth]{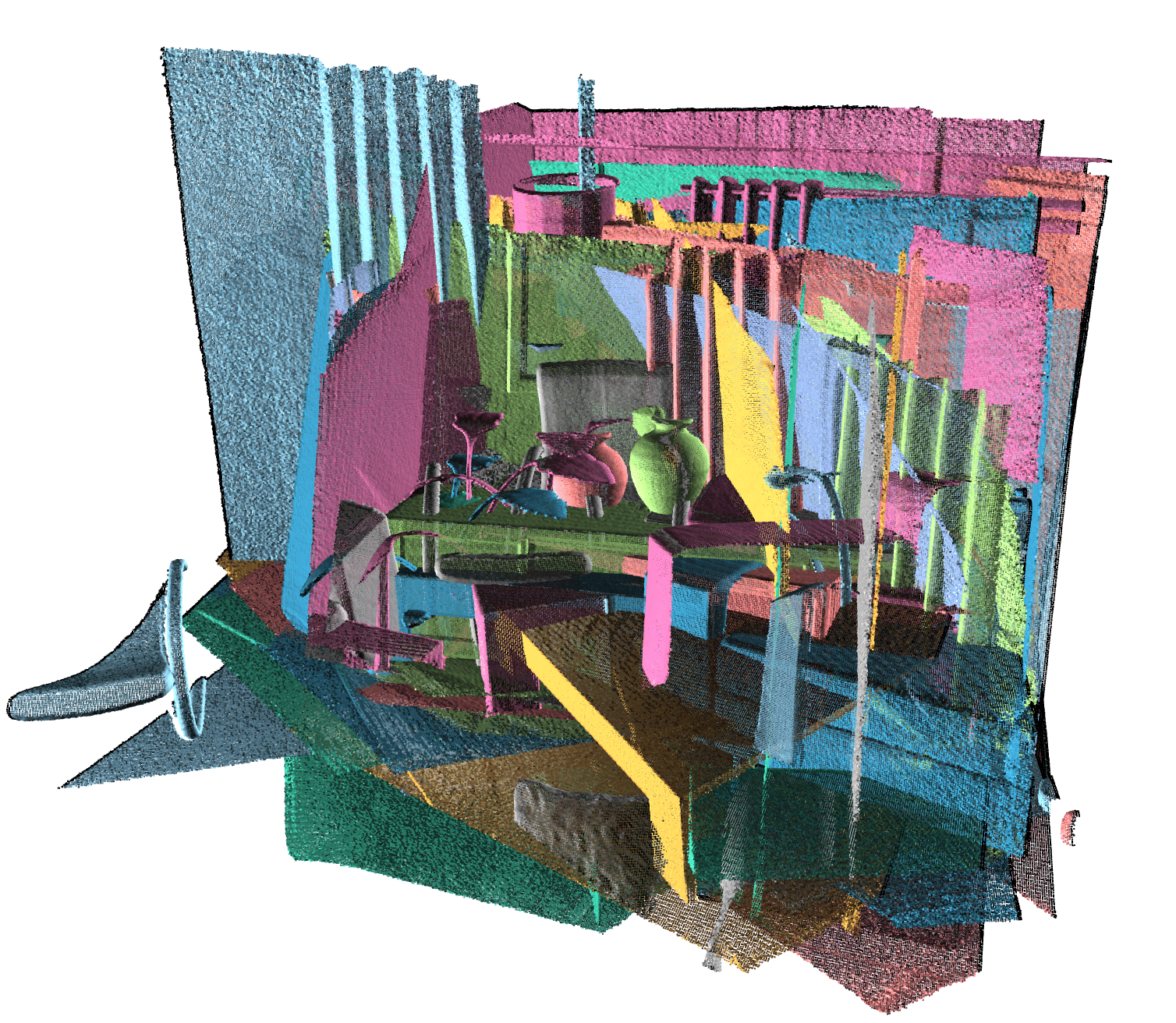}
        \caption{EIG}
        \label{fig:livingroom2_arri}
    \end{subfigure}
    \caption{3D reconstructions of 3DMatch Livingroom2 dataset ($n=47$ and observation missing rate $79.86\%$).}
    \label{fig:3DMatch1}
\end{figure}

\begin{figure}[H]
\captionsetup[subfigure]{font=scriptsize,labelfont=rm}  
    \centering
    \begin{subfigure}[b]{0.18\textwidth}
        \centering
        \includegraphics[width=\linewidth]{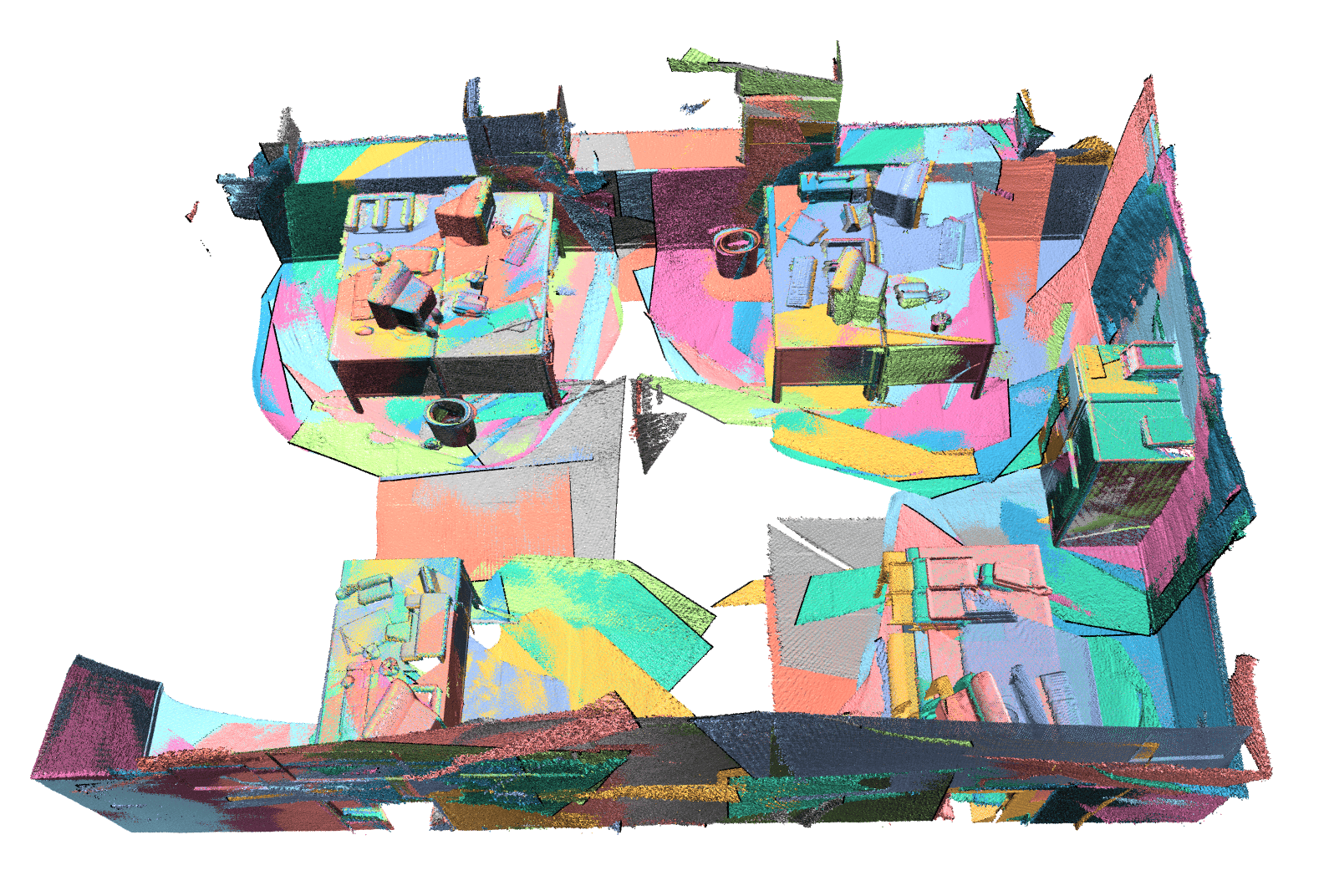}
        \caption{DQGPM}
        \label{fig:office2_gpm}
    \end{subfigure}\qquad
    \begin{subfigure}[b]{0.18\textwidth}
        \centering
        \includegraphics[width=\linewidth]{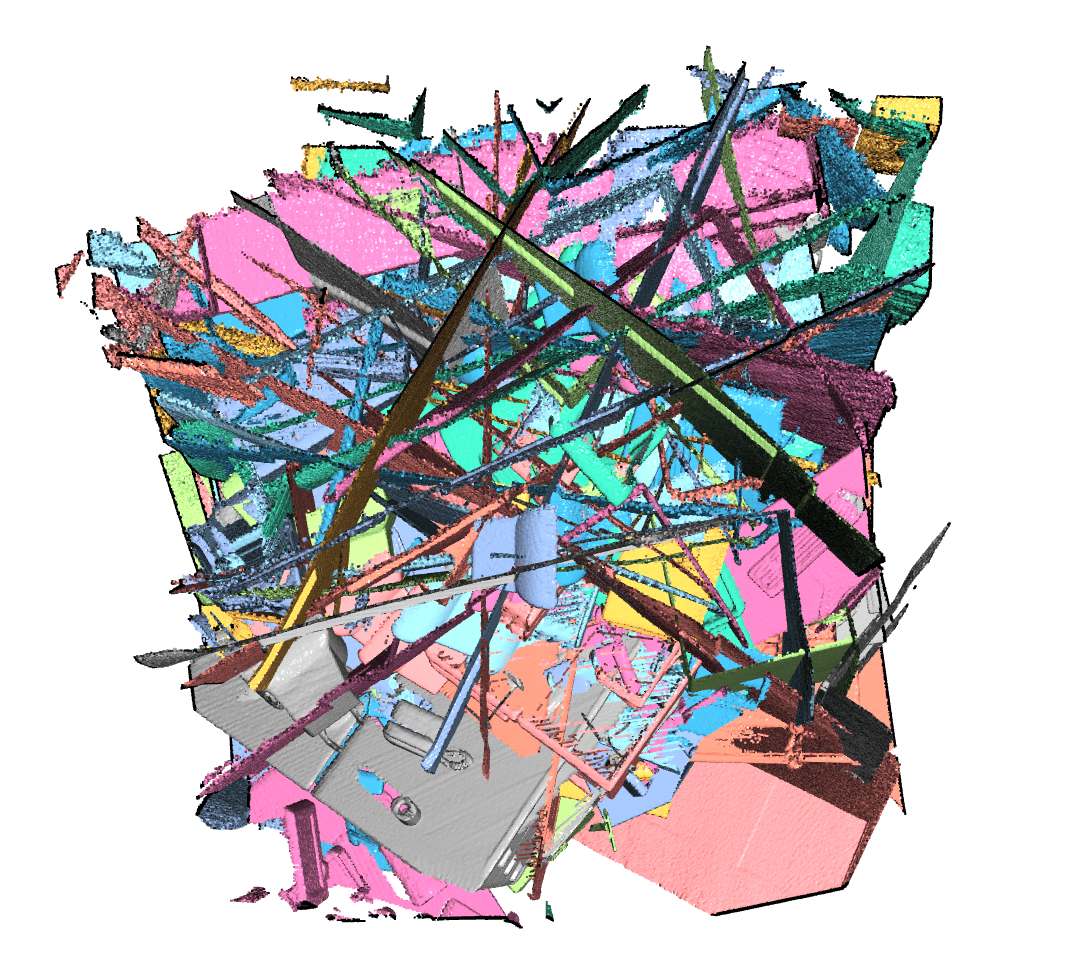}
        \caption{SPEC}
        \label{fig:office2_doherty}
    \end{subfigure}\hfill
    \begin{subfigure}[b]{0.18\textwidth}
        \centering
        \includegraphics[width=\linewidth]{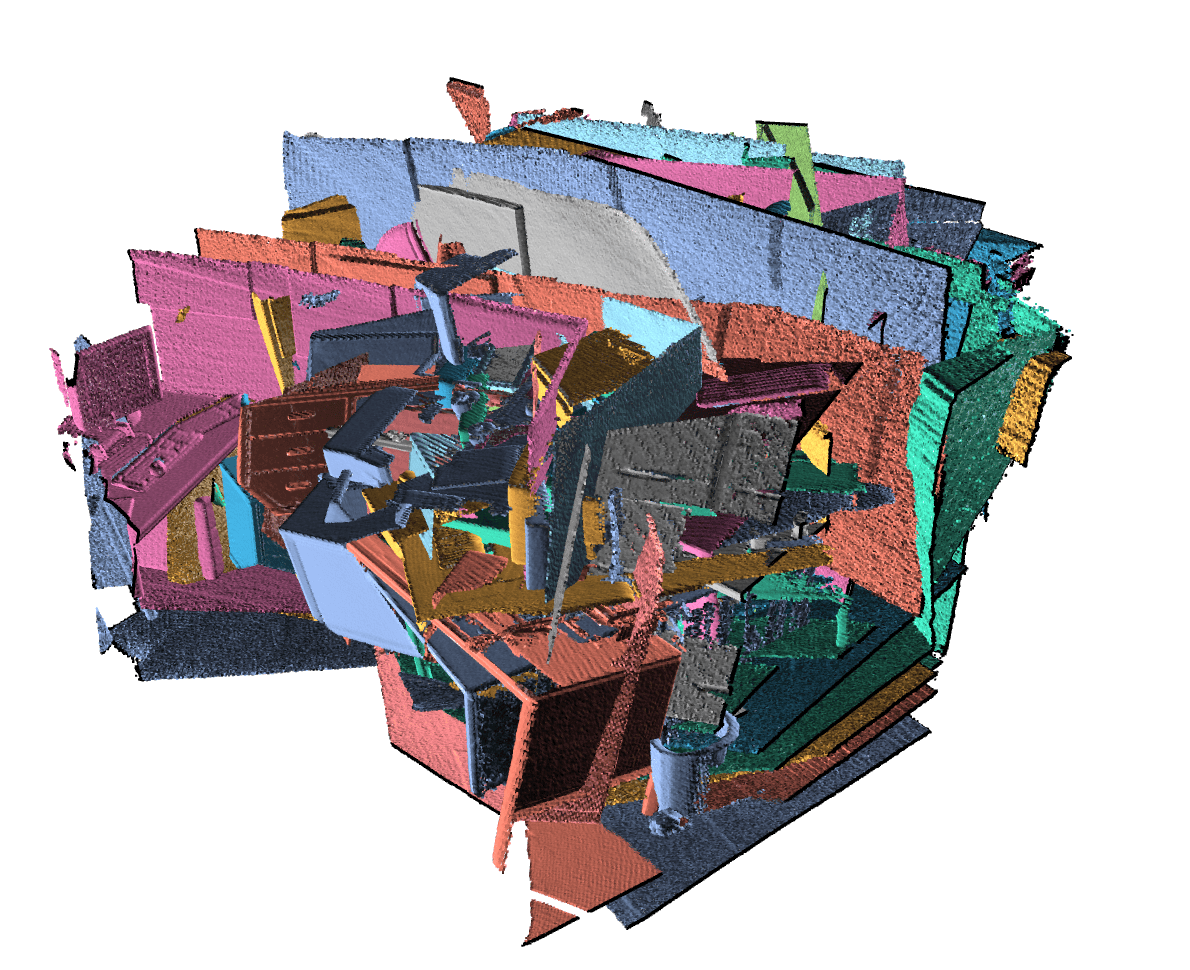}
        \caption{SDR}
        \label{fig:office2_rosen}
    \end{subfigure}\qquad
    \begin{subfigure}[b]{0.18\textwidth}
        \centering
        \includegraphics[width=\linewidth]{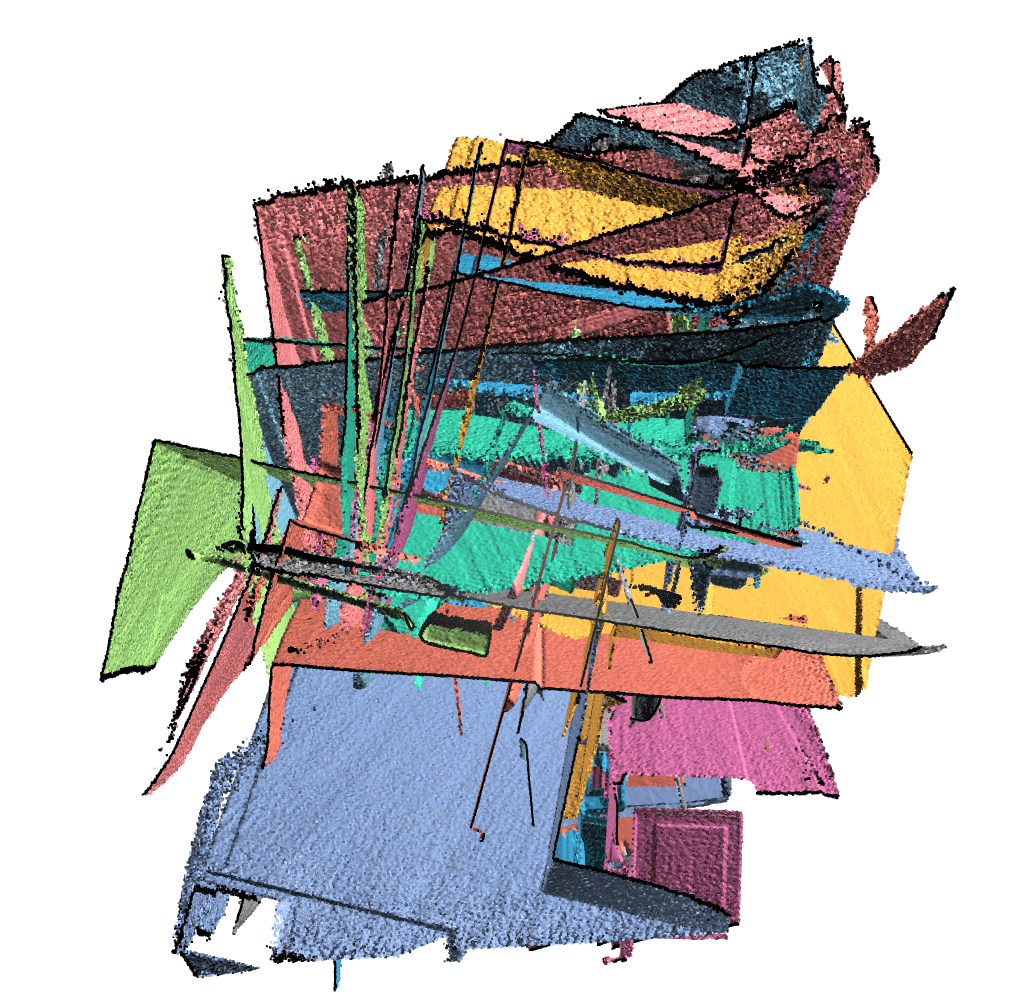}
        \caption{EIG}
        \label{fig:office2_arri}
    \end{subfigure}
    \caption{3D reconstructions of 3DMatch Office2 dataset ($n=50$ and observation missing rate $83.36 \%$).}
    \label{fig:3DMatch2}
\end{figure}

\newpage

\section*{Acknowledgements}

The authors thank Professor Chunfeng Cui and Mr. Hao Yang for their careful reading of an earlier version of the manuscript and for pointing out missing details in the proof of Theorem~\ref{estimation perf}. This research is supported in part by the Hong Kong Research Grants Council (RGC) General Research Fund (GRF) project CUHK 14206525.

\section*{Impact Statement}

This paper presents work whose goal is to advance the field of Machine
Learning. There are many potential societal consequences of our work, none
which we feel must be specifically highlighted here.

\bibliography{ref}

@article{ling2022minimax,
  title={Minimax principle for right eigenvalues of dual quaternion matrices and their generalized inverses},
  author={Ling, Chen and Qi, Liqun and Yan, Hong},
  journal={arXiv preprint arXiv:2203.03161},
  year={2022}
}

@article{qi2022dual,
  title={Dual quaternions and dual quaternion vectors},
  author={Qi, Liqun and Ling, Chen and Yan, Hong},
  journal={Communications on Applied Mathematics and Computation},
  volume={4},
  number={4},
  pages={1494--1508},
  year={2022},
  publisher={Springer}
}

@article{hadi2024se,
  title={{SE(3)} Synchronization by eigenvectors of dual quaternion matrices},
  author={Hadi, Ido and Bendory, Tamir and Sharon, Nir},
  journal={Information and Inference: A Journal of the IMA},
  volume={13},
  number={3},
  year={2024},
  publisher={Oxford Academic}
}

@article{horn1987closed,
  author    = {Horn, Berthold K. P.},
  title     = {Closed-form solution of absolute orientation using unit quaternions},
  journal   = {Journal of the Optical Society of America A},
  year      = {1987},
  volume    = {4},
  number    = {4},
  pages     = {629--642},
  publisher = {Optica Publishing Group},
}

@article{rosen2019se,
  title={{SE-Sync}: A certifiably correct algorithm for synchronization over the special {E}uclidean group},
  author={Rosen, David M and Carlone, Luca and Bandeira, Afonso S and Leonard, John J},
  journal={The International Journal of Robotics Research},
  volume={38},
  number={2-3},
  pages={95--125},
  year={2019},
  publisher={Sage Publications Sage UK: London, England}
}

@article{cui2024power,
  title={A power method for computing the dominant eigenvalue of a dual quaternion {H}ermitian matrix},
  author={Cui, Chunfeng and Qi, Liqun},
  journal={Journal of Scientific Computing},
  volume={100},
  number={1},
  pages={21},
  year={2024},
  publisher={Springer}
}

@book{garling2011clifford,
  author    = {David J. H. Garling},
  title     = {Clifford Algebras: An Introduction},
  publisher = {Cambridge University Press},
  year      = {2011},
  series    = {London Mathematical Society Student Texts},
  number    = {78},
  address   = {Cambridge, UK}
}

@article{daniilidis1999hand,
  title={Hand-eye calibration using dual quaternions},
  author={Daniilidis, Konstantinos},
  journal={The International Journal of Robotics Research},
  volume={18},
  number={3},
  pages={286--298},
  year={1999},
  publisher={SAGE Publications}
}

@article{liu2023unified,
  title={A unified approach to synchronization problems over subgroups of the orthogonal group},
  author={Liu, Huikang and Yue, Man-Chung and So, Anthony Man-Cho},
  journal={Applied and Computational Harmonic Analysis},
  volume={66},
  pages={320--372},
  year={2023},
  publisher={Elsevier}
}

@article{liu2017estimation,
  title={On the estimation performance and convergence rate of the generalized power method for phase synchronization},
  author={Liu, Huikang and Yue, Man-Chung and Man-Cho So, Anthony},
  journal={SIAM Journal on Optimization},
  volume={27},
  number={4},
  pages={2426--2446},
  year={2017},
  publisher={SIAM}
}

@article{journee2010generalized,
  title={Generalized power method for sparse principal component analysis.},
  author={Journ{\'e}e, Michel and Nesterov, Yurii and Richt{\'a}rik, Peter and Sepulchre, Rodolphe},
  journal={Journal of Machine Learning Research},
  volume={11},
  number={2},
  year={2010}
}

@article{arrigoni2016spectral,
  title={Spectral synchronization of multiple views in {SE(3)}},
  author={Arrigoni, Federica and Rossi, Beatrice and Fusiello, Andrea},
  journal={SIAM Journal on Imaging Sciences},
  volume={9},
  number={4},
  pages={1963--1990},
  year={2016},
  publisher={SIAM}
}

@inproceedings{govindu2004lie,
  author    = {Venu Madhav Govindu},
  title     = {Lie-Algebraic Averaging for Globally Consistent Motion Estimation},
  booktitle = {Proceedings of the 2004 IEEE Computer Society Conference on Computer Vision and Pattern Recognition (CVPR 2004)},
  year      = {2004},
  volume    = {1},
  pages     = {684--691},
  publisher = {IEEE Computer Society},
}

@inproceedings{fusiello2002model,
  author    = {Andrea Fusiello and Umberto Castellani and Luca Ronchetti and Vittorio Murino},
  title     = {Model Acquisition by Registration of Multiple Acoustic Range Views},
  booktitle = {Proceedings of the 7th European Conference on Computer Vision (ECCV)},
  year      = {2002},
  pages     = {805--819},
  publisher = {Springer}
}

@article{sharp2004multiview,
  title={Multiview registration of {3D} scenes by minimizing error between coordinate frames},
  author={Sharp, Gregory C and Lee, Sang W and Wehe, David K},
  journal={IEEE Transactions on Pattern Analysis and Machine Intelligence},
  volume={26},
  number={8},
  pages={1037--1050},
  year={2004},
  publisher={IEEE}
}

@inproceedings{martinec2007robust,
  title     = {Robust Rotation and Translation Estimation in Multiview Reconstruction},
  author    = {Martinec, Daniel and Pajdla, Tom{\'a}{\v{s}}},
  booktitle = {2007 IEEE Conference on Computer Vision and Pattern Recognition (CVPR)},
  year      = {2007},
  month     = jun,
  address   = {Minneapolis, MN, USA},
  pages     = {1--8},
  publisher = {IEEE},
}

@inproceedings{liu2012convex,
  author    = {Minjie Liu and Shoudong Huang and Gamini Dissanayake and Heng Wang},
  title     = {A Convex Optimization Based Approach for Pose {SLAM} Problems},
  booktitle = {2012 {IEEE/RSJ} International Conference on Intelligent Robots and Systems ({IROS})},
  year      = {2012},
  pages     = {1898--1903},
  address   = {Vilamoura-Algarve, Portugal},
  month     = oct,
  publisher = {IEEE},
}

@article{chaudhury2015global,
  title={Global registration of multiple point clouds using semidefinite programming},
  author={Chaudhury, Kunal N and Khoo, Yuehaw and Singer, Amit},
  journal={SIAM Journal on Optimization},
  volume={25},
  number={1},
  pages={468--501},
  year={2015},
  publisher={SIAM}
}

@inproceedings{arie2012global,
  author    = {Mica Arie-Nachimson and Shahar Z. Kovalsky and Ira Kemelmacher-Shlizerman and Amit Singer and Ronen Basri},
  title     = {Global Motion Estimation from Point Matches},
  booktitle = {2012 Second International Conference on 3D Imaging, Modeling, Processing, Visualization \& Transmission (3DIMPVT)},
  year      = {2012},
  pages     = {81--88},
  address   = {Zurich, Switzerland},
  publisher = {IEEE}
}

@article{thunberg2014distributed,
  title={Distributed attitude synchronization control of multi-agent systems with switching topologies},
  author={Thunberg, Johan and Song, Wenjun and Montijano, Eduardo and Hong, Yiguang and Hu, Xiaoming},
  journal={Automatica},
  volume={50},
  number={3},
  pages={832--840},
  year={2014},
  publisher={Elsevier}
}

@article{jin2020event,
  title={Event-triggered attitude consensus with absolute and relative attitude measurements},
  author={Jin, Xin and Shi, Yang and Tang, Yang and Wu, Xiaotai},
  journal={Automatica},
  volume={122},
  pages={109245},
  year={2020},
  publisher={Elsevier}
}

@article{grisetti2010tutorial,
  title={A tutorial on graph-based {SLAM}},
  author={Grisetti, Giorgio and K{\"u}mmerle, Rainer and Stachniss, Cyrill and Burgard, Wolfram},
  journal={IEEE Intelligent Transportation Systems Magazine},
  volume={2},
  number={4},
  pages={31--43},
  year={2010},
  publisher={IEEE}
}

@article{chen2025non,
  title={Non-convex Pose Graph Optimization in SLAM via Proximal Linearized Riemannian {ADMM}.},
  author={Chen, Xin and Cui, Chunfeng and Han, Deren and Qi, Liqun},
  journal={Journal of Optimization Theory and Applications},
  volume={206},
  number={3},
  pages={78},
  year={2025},
  publisher={Springer}
}

@article{cheng2016dual,
  title={Dual quaternion-based graphical {SLAM}},
  author={Cheng, Jiantong and Kim, Jonghyuk and Jiang, Zhenyu and Che, Wanfang},
  journal={Robotics and Autonomous Systems},
  volume={77},
  pages={15--24},
  year={2016},
  publisher={Elsevier}
}

@inproceedings{srivatsan2016estimating,
  author    = {Rangaprasad Arun Srivatsan and Gillian T. Rosen and D. Feroze Naina Mohamed and Howie Choset},
  title     = {Estimating {SE}(3) Elements Using a Dual Quaternion Based Linear {K}alman Filter},
  booktitle = {Robotics: Science and Systems},
  year      = {2016},
  address   = {Ann Arbor, Michigan, USA}
}

@inproceedings{besl1992method,
  author    = {Besl, Paul J. and McKay, Neil D.},
  title     = {Method for Registration of {3-D} Shapes},
  booktitle = {Sensor Fusion IV: Control Paradigms and Data Structures},
  series    = {Proceedings of {SPIE}},
  volume    = {1611},
  pages     = {586--606},
  year      = {1992},
  publisher = {SPIE},
  address   = {Bellingham, WA, USA}
}

@article{han2019real,
  title={Real-time global registration for globally consistent {RGB-D} {SLAM}},
  author={Han, Lei and Xu, Lan and Bobkov, Dmytro and Steinbach, Eckehard and Fang, Lu},
  journal={IEEE Transactions on Robotics},
  volume={35},
  number={2},
  pages={498--508},
  year={2019},
  publisher={IEEE}
}

@inproceedings{zhou2016fast,
  author    = {Qian-Yi Zhou and Jaesik Park and Vladlen Koltun},
  title     = {Fast Global Registration},
  booktitle = {Computer Vision -- {ECCV} 2016},
  editor    = {Bastian Leibe and Ji{\v{r}}{\'\i} Matas and Nicu Sebe and Max Welling},
  series    = {Lecture Notes in Computer Science},
  volume    = {9906},
  pages     = {766--782},
  publisher = {Springer},
  address   = {Cham},
  year      = {2016}
}

@inproceedings{benjemaa1998solution,
  author    = {Raouf Benjemaa and Francis Schmitt},
  title     = {A Solution for the Registration of Multiple {3D} Point Sets Using Unit Quaternions},
  booktitle = {Computer Vision --- {ECCV}'98},
  editor    = {Hans Burkhardt and Bernd Neumann},
  series    = {Lecture Notes in Computer Science},
  volume    = {1407},
  pages     = {34--50},
  publisher = {Springer},
  address   = {Berlin, Heidelberg},
  year      = {1998}
}

@inproceedings{arrigoni2016global,
  title     = {Global Registration of {3D} Point Sets via {LRS} Decomposition},
  author    = {Arrigoni, Federica and Rossi, Beatrice and Fusiello, Andrea},
  booktitle = {Computer Vision -- ECCV 2016},
  series    = {Lecture Notes in Computer Science},
  volume    = {9908},
  pages     = {489--504},
  publisher = {Springer},
  address   = {Cham},
  year      = {2016}
}

@inproceedings{jiang2013global,
  author    = {Nianjuan Jiang and Zhaopeng Cui and Ping Tan},
  title     = {A Global Linear Method for Camera Pose Registration},
  booktitle = {Proceedings of the IEEE International Conference on Computer Vision (ICCV)},
  year      = {2013},
  pages     = {481--488},
  address   = {Sydney, Australia},
  month     = dec,
  publisher = {IEEE},
}

@inproceedings{fan2022generalized,
  title={Generalized Proximal Methods for Pose Graph Optimization},
  author={Fan, Taosha and Murphey, Todd},
  booktitle={Robotics Research: The 19th International Symposium ISRR},
  volume={20},
  pages={393},
  year={2022},
  organization={Springer Nature}
}

@article{zhu2023rotation,
  title={Rotation group synchronization via quotient manifold},
  author={Zhu, Linglingzhi and Li, Chong and So, Anthony Man-Cho},
  journal={arXiv preprint arXiv:2306.12730},
  year={2023}
}

@article{ling2022improved,
  title={Improved performance guarantees for orthogonal group synchronization via generalized power method},
  author={Ling, Shuyang},
  journal={SIAM Journal on Optimization},
  volume={32},
  number={2},
  pages={1018--1048},
  year={2022},
  publisher={SIAM}
}

@article{boumal2016nonconvex,
  title={Nonconvex phase synchronization},
  author={Boumal, Nicolas},
  journal={SIAM Journal on Optimization},
  volume={26},
  number={4},
  pages={2355--2377},
  year={2016},
  publisher={SIAM}
}

@inproceedings{ozyesil2015robust,
  author    = {Onur {\"O}zyesil and Amit Singer},
  title     = {Robust Camera Location Estimation by Convex Programming},
  booktitle = {Proceedings of the {IEEE} Conference on Computer Vision and Pattern Recognition ({CVPR})},
  pages     = {2674--2683},
  publisher = {IEEE},
  year      = {2015},
}

@inproceedings{tron2014statistical,
  author    = {Roberto Tron and Kostas Daniilidis},
  title     = {Statistical Pose Averaging with Non-isotropic and Incomplete Relative Measurements},
  booktitle = {Computer Vision -- {ECCV} 2014},
  editor    = {David Fleet and Tom{\'a}{\v{s}} Pajdla and Bernt Schiele and Tinne Tuytelaars},
  series    = {Lecture Notes in Computer Science},
  volume    = {8693},
  pages     = {804--819},
  publisher = {Springer},
  year      = {2014}
}

@inproceedings{doherty2022performance,
  title={Performance guarantees for spectral initialization in rotation averaging and pose-graph SLAM},
  author={Doherty, Kevin J and Rosen, David M and Leonard, John J},
  booktitle={2022 International conference on robotics and automation (ICRA)},
  pages={5608--5614},
  year={2022},
  organization={IEEE}
}

@inproceedings{zhao2026anchored,
  title={Anchored Spectral Estimator for Rigid Motion Synchronization},
  author={Zhao, Ziyue and Liu, Huikang and Yue, Man-Chung},
  booktitle={2026 IEEE International Conference on Acoustics, Speech and Signal Processing (ICASSP 2026)},
  pages={561--565},
  year={2026},
  organization={IEEE}
}
\bibliographystyle{icml2026}

\newpage
\appendix
\onecolumn

\section{Algebras}
\subsection{Dual Numbers}

We denote the set of dual numbers as $\mathbb{D}$. A dual number $d$ has the form of $d=d_{\bst}+d_{\mathcal{I}}\epsilon$ with $d_{\bst}$ and $d_{\mathcal{I}}$ in real number field. We call $d_{\bst}$ the real part or standard part, and $d_{\mathcal{I}}$ is regarded as the dual part or infinitesimal part. $\epsilon$ is a dual unit satisfying $\epsilon^2=0$, which is commutative in multiplication with real numbers, complex numbers, and quaternion numbers. A dual number is called infinitesimal if it only has dual part, otherwise, we call it appreciable. 

Addition and multiplication on dual number can be defined as follows: for $d_i \in \mathbb{D}$, where $d_i=d_{i,\bst}+d_{i,\mathcal{I}}\epsilon,\ i=1,2$, we have
\begin{equation*}
    \begin{aligned}
        & d_1+d_2 = (d_{1,\bst}+d_{2,\bst})+(d_{1,\mathcal{I}}+d_{2,\mathcal{I}})\epsilon \\
        & d_1d_2  = d_{1,\bst}d_{2,\bst}+(d_{1,\bst}d_{2,\mathcal{I}}+d_{1,\mathcal{I}}d_{2,\bst})\epsilon
    \end{aligned}
\end{equation*}
Although the above properties show the similarity between dual numbers and complex number field, we should notice that the algebra of dual numbers form a ring but not a field as infinitesimal dual number does not have an inverse element. For the dual number ring, $d=d_{\bst}+d_{\mathcal{I}}\epsilon \in \mathbb{D}$ with $d_{\bst}\neq 0$ has an inverse of 
$$d^{-1}=d_{\bst}^{-1}(1-d_{\mathcal{I}}d_{\bst}^{-1}\epsilon),$$
The zero element of $\mathbb{D}$ is $0_{\mathbb{D}}:=0+0\epsilon$ and the identity element is $1_{\mathbb{D}}:=1+0\epsilon$. Moreover, \citet{qi2022dual} defined the square root and absolute value of $d$ as
\begin{equation*}
    \sqrt{d}=\sqrt{d_{\bst}}+\frac{d_{\mathcal{I}}}{2\sqrt{d_{\bst}}}\epsilon
\end{equation*}
and
\begin{equation*}
        |d|=\left\{
        \begin{aligned}
             |d_{\bst}|+\operatorname{sgn}(d_{\bst})d_{\mathcal{I}}\epsilon, \quad & \ \rm{if}\  d_{\bst}\neq 0,\\
             |d_{\mathcal{I}}|\epsilon,\quad  & \ \rm{otherwise}.
        \end{aligned}
         \right.
\end{equation*}
where $\operatorname{sgn}(u) = \frac{u}{|u|}$ for nonzero real number $u$ and $\operatorname{sgn}(u) =0$ otherwise. For dual numbers $x=a+b\epsilon$ and $y=c+d\epsilon$, a total order $x>y$ is given in \citet{qi2022dual} by $a>c$ or $a=c$ and $b>d$, and $x=y$ if and only if $a=c$ and $b=d$.

\subsection{Quaternions}

We denote the set of quaternions by $\mathbb{H}$. Let $\bm{i},\bm{j},\bm{k}$ be an orthonormal basis of $\mathbb{R}^3$. A quaternion $q\in\mathbb{H}$ is represented as the sum of a scalar part $q_0$ and a vector part $\bm{q}=(q_1,q_2,q_3)$, namely,
\begin{equation*}
q \;=\; q_0+\bm{q}\;=\; q_0 + q_1\bm{i}+q_2\bm{j}+q_3\bm{k}.
\end{equation*}
Equivalently, $q$ can be written as the four-dimensional vector $(q_0,q_1,q_2,q_3)$. The real part of $q$ is $\mathrm{Re}(q)=q_0$, and the imaginary part is $\mathrm{Im}(q)=q_1\bm{i}+q_2\bm{j}+q_3\bm{k}$.

Motivated by the dot product and cross product in $\mathbb{R}^3$, the product of two quaternions $p=p_0+\bm{p}$ and $q=q_0+\bm{q}$ is given by
\[
pq \;=\; p_0q_0-\bm{p}\cdot\bm{q}+p_0\bm{q}+q_0\bm{p}+\bm{p}\times\bm{q},
\]
where $\bm{p}\bm{q}$ denotes the dot product, i.e., the inner product of $\bm{p}$ and $\bm{q}$. Quaternion multiplication satisfies the distributive law but is generally noncommutative. Consequently, the quaternion algebra forms a noncommutative division ring.

The conjugate of a quaternion $q=q_0+q_1\bm{i}+q_2\bm{j}+q_3\bm{k}$ is defined as
$q^*:=q_0-q_1\bm{i}-q_2\bm{j}-q_3\bm{k}$.
Its magnitude is
$|q|=\sqrt{q_0^2+q_1^2+q_2^2+q_3^2}$.
Consequently, the inverse of any nonzero quaternion $q$ is given by
$q^{-1}=q^*/|q|^2$.
Moreover, for any two quaternions $p$ and $q$, the conjugation reverses the order of multiplication, i.e., $(pq)^*=q^*p^*$.
A quaternion $q$ with $|q|=1$ is called a unit quaternion (or a rotation quaternion).

We denote by $\mathbb{H}^n$ the set of $n$-dimensional quaternion vectors.
For $\bx=(x_1,x_2,\ldots,x_n)^\top$ and $\by=(y_1,y_2,\ldots,y_n)^\top\in\mathbb{H}^n$,
we define
\[
\bx^*\by=\sum_{i=1}^n x_i^*y_i,
\]
where $\bx^*=(x_1^*,x_2^*,\ldots,x_n^*)$ denotes the conjugate transpose of $\bx$.

\subsection{Dual Quaternion}

We denote the set of dual quaternions by $\mathbb{DH}$. Any dual quaternion $q\in\mathbb{DH}$ can be written as
\begin{equation*}
q = q_{\bst} + q_{\mathcal{I}}\epsilon,
\end{equation*}
where $q_{\bst},q_{\mathcal{I}}\in\mathbb{H}$ are referred to as the standard part and the dual part of $q$, respectively. When $q_{\bst}\neq 0$, the dual quaternion $q$ is called appreciable. Moreover, if both $q_{\bst}$ and $q_{\mathcal{I}}$ are imaginary quaternions, then $q$ is called an imaginary dual quaternion.

The conjugate of $q$ is defined by
\begin{equation*}
q^* = q_{\bst}^* + q_{\mathcal{I}}^*\epsilon.
\end{equation*}
If $q=q^*$, then $q$ is a dual number. If $q$ is imaginary, then $q^*=-q$.

The magnitude of $q$ is defined as
\begin{equation*}
| q| := \left\{ \begin{aligned}
|q_{\bst}| + {(q_{\bst} q_{\mathcal{I}}^*+ q_{\mathcal{I}}q_{\bst}^*) \over 2|q_{\bst}|}\epsilon, & \quad {\rm if}\  q_{\bst} \not = 0, \\
| q_{\mathcal{I}}|\epsilon, &  \quad {\rm otherwise},
\end{aligned} \right.
\end{equation*}
which is a dual number.

A dual quaternion $q$ is said to be invertible if there exists a dual quaternion $p$ such that $pq=qp=1$.
A dual quaternion $q$ is invertible if and only if it is appreciable. In this case, its inverse is
\[
q^{-1} = q_{\bst}^{-1} - q_{\bst}^{-1} q_{\mathcal{I}} q_{\bst}^{-1} \epsilon.
\]

We denote the set of unit dual quaternions by $\mathrm{UDQ}$. A dual quaternion $q$ is called a unit dual quaternion if $|q|=1$. Any unit dual quaternion is invertible, and its inverse coincides with its conjugate, i.e., $q^{-1}=q^*$.
Moreover, $q$ is a unit dual quaternion if and only if $q_{\bst}$ is a unit quaternion and
\begin{equation*}
q_{\bst} q_{\mathcal{I}}^* +  q_{\mathcal{I}} q_{\bst}^* = q_{\bst}^* q_{\mathcal{I}} +  q_{\mathcal{I}}^* q_{\bst}=0.
\end{equation*}

Consider two dual quaternions $p=p_{\bst}+p_{\mathcal{I}}\epsilon$ and $q=q_{\bst}+q_{\mathcal{I}}\epsilon$. Their sum is given by
\[
p+ q = \left(p_{\bst} + q_{\bst}\right) + \left( p_{\mathcal{I}} +  q_{\mathcal{I}}\right)\epsilon,
\]
and their product is
\[
p q = p_{\bst}q_{\bst} + \left(p_{\bst} q_{\mathcal{I}} +  p_{\mathcal{I}} q_{\bst}\right)\epsilon.
\]

Denote the set of $n$-dimensional dual quaternion  vectors by ${\mathbb{DH}}^n$. For $ \bx = ( x_1,  x_2,\ldots,  x_n)$, the $2$-norm is defined as
\begin{equation*}
\|\bx\|_2 :=
\begin{cases}
\sqrt{\sum_{i=1}^n |x_i|^2}, & \bx\st \neq \bm{0}_n,\\
\sqrt{\sum_{i=1}^n |(x_i)\ii|^2}\,\epsilon, & \text{otherwise}.
\end{cases}
\end{equation*}

\section{Representation of SO(3) and SE(3)}\label{sec:dq repre}

Let $\bR\in\mathrm{SO}(3)$ be a rotation element and a point $\bm{v}$ under rotation $\bR$ has the form of $\bR\bm{v}$, Proposition \ref{prop:q-SO3} implies that $q{v}q^*$ is a pure quaternion with the form of $0+\bR\bm{v}$. It gives a result for representing $\mathrm{SO}(3)$ element with unit quaternion \citep[Sec.~4.9]{garling2011clifford}, which shows that quaternions can be used to obtain double cover of $\mathrm{SO}(3)$ group. More specifically, $\psi: \mathbb{UQ}\rightarrow\mathrm{SO}(3) $ is a 2-to-1 surjective group homomorphism of $\mathbb{UQ}$ onto $\mathrm{SO}(3)$ with kernel $\{1, -1\}$, such that $\psi^{-1}(\bR)=\{q,-q\}$ for every $\bR\in \mathrm{SO}(3)$.

\begin{proposition}[{\citet[Sec.~3]{horn1987closed}}]\label{prop:q-SO3}
    For any unit quaternion with a representation of
    \begin{equation}\label{eq:aa2q}
        q=q_0+\bm{q_R}=\cos\frac{\theta}{2}+\bm{\hat{u}}\sin\frac{\theta}{2},
    \end{equation}
    a rotation of random vector $\bm{v_R}\in\mathbb{R}^3$ about an axis (denoted by unit vector $\bm{\hat{u}}\in\mathbb{R}^3$) by an angle $\theta$ can be represented by the operator
    \begin{equation*}
        \psi_q(\bm{v_R})=q{v}q^*,
    \end{equation*}
    where $v$ is treated as a quaternion with scalar part $0$ and vector part $\bm{v_R}$.
\end{proposition}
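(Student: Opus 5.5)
The plan is to compute $qvq^*$ directly with the quaternion product formula from the appendix and then recognize the result as Rodrigues' rotation formula. Throughout, write $c:=\cos\frac{\theta}{2}$ and $s:=\sin\frac{\theta}{2}$. Then $q=c+s\hat{\bm u}$ and $q^*=c-s\hat{\bm u}$. Let $v=0+\bm v_R$ be the pure quaternion, and abbreviate $\bm v:=\bm v_R$ and $\bm u:=\hat{\bm u}$, with $|\bm u|=1$.

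First, compute $qv$ using $pq=p_0q_0-\bm p\cdot\bm q+p_0\bm q+q_0\bm p+\bm p\times\bm q$. This gives
\[
qv=-s\,\bm u\cdot\bm v+\bigl(c\,\bm v+s\,\bm u\times\bm v\bigr).
\]
Second, right-multiply this by $q^*$, whose scalar part is $c$ and whose vector part is $-s\bm u$.

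The scalar part of the product is
\[
-cs\,\bm u\cdot\bm v+s\bigl(c\,\bm v\cdot\bm u+s\,(\bm u\times\bm v)\cdot\bm u\bigr)=0,
\]
because $(\bm u\times\bm v)\cdot\bm u=0$. So $qvq^*$ is a pure quaternion.

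The vector part is
\[
s^2(\bm u\cdot\bm v)\bm u+c^2\bm v+cs\,\bm u\times\bm v-s\bigl(c\,\bm v\times\bm u+s\,(\bm u\times\bm v)\times\bm u\bigr).
\]
To simplify it, use $\bm v\times\bm u=-\bm u\times\bm v$ and the triple-product identity $(\bm u\times\bm v)\times\bm u=\bm v-(\bm u\cdot\bm v)\bm u$, which holds because $|\bm u|=1$. The vector part then collapses to
\[
(c^2-s^2)\bm v+2cs\,\bm u\times\bm v+2s^2(\bm u\cdot\bm v)\bm u .
\]
Applying the half-angle identities $c^2-s^2=\cos\theta$, $2cs=\sin\theta$ and $2s^2=1-\cos\theta$ gives
\[
\cos\theta\,\bm v+\sin\theta\,\bm u\times\bm v+(1-\cos\theta)(\bm u\cdot\bm v)\bm u .
\]
This is exactly Rodrigues' formula for rotating $\bm v$ about the axis $\bm u$ by angle $\theta$.

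To make that identification explicit, decompose $\bm v=\bm v_\parallel+\bm v_\perp$ along $\bm u$. The component $\bm v_\parallel=(\bm u\cdot\bm v)\bm u$ is left fixed. The component $\bm v_\perp$ is sent to $\cos\theta\,\bm v_\perp+\sin\theta\,\bm u\times\bm v_\perp$, which is a rotation by $\theta$ within the plane orthogonal to $\bm u$, since $\bm v_\perp$ and $\bm u\times\bm v_\perp$ form an orthogonal pair of equal length. Hence $\psi_q(\bm v_R)=\bm R\bm v_R$.

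The only delicate point is the sign bookkeeping in the cross-product terms of the second multiplication, because quaternion multiplication is noncommutative. I would double-check those signs against the special case $\bm v=\bm u$, where the result must be $\bm u$ itself, and against $\theta=\pi$, where $\bm v_\perp$ must be negated.
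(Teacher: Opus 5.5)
Your computation is correct: the scalar part of $qvq^*$ vanishes, the vector part reduces to $\cos\theta\,\bm v+\sin\theta\,\bm u\times\bm v+(1-\cos\theta)(\bm u\cdot\bm v)\bm u$, and your parallel/perpendicular decomposition correctly identifies this as the rotation about $\bm u$ by angle $\theta$. The paper gives no proof of its own here and simply cites Horn (1987, Sec.~3) for this standard fact; your direct expansion of $qvq^*$ into Rodrigues' formula is the standard derivation that citation relies on.
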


Let $(\bR,\bm{t})\in\text{SE(3)}$ represents an element of rigid motion with rotation $\bR\in \mathrm{SO}(3)$ and translation $\bm{t}\in \mathbb{R}^3$. Consider a rigid line $\ell$ in the Pl{\"u}cker coordinates $(\hat{\bm{l}},\bm{m})$. Similar to the quaternion and $\mathrm{SO}(3)$ case in the above subsection, a correspondence between SE(3) and unit dual quaternion can be constructed. To be operated on by dual quaternion, the line $\ell$ needs to be converted into a dual quaternion form $\ell=\hat{\bm{l}}+\epsilon\bm{m}$. The transformation operator is given by a unit dual quaternion which contains information of rotation and translation at the same time.

\begin{proposition}[{\citet[Sec.~3]{daniilidis1999hand}}]\label{prop:dq-SE3}
  Let $\ell_i = \hat{\bm{l}}_i+\epsilon\bm{m}_i, i=1,2$ be the dual quaternion describing of line $\ell_i$, line $\ell_1$ is transformed into line $\ell_2$ with a rotation $R\in SO(3)$ followed by a translation $\bm{t}\in \mathbb{R}^3$. Then
    $$\ell_2 = \phi_x(\ell_1)=x\ell_1x^*,$$ 
    where $x$ is a unit dual quaternion in the form of 
    \begin{equation}\label{eq:rt2dq}
         x=\left(1+\frac{\epsilon}{2}\bm{t}\right)q,
    \end{equation} with $q$ being the unit quaternion representation of $R$ following the rule in Proposition \ref{prop:q-SO3} and $\bm{t}\in \mathbb{R}^3$ being the translation vector.
\end{proposition}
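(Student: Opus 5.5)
The plan is a direct computation: expand $x\ell_1x^*$ into standard and dual parts, then compare with the Plücker coordinates of the rigidly moved line. First, $x$ is indeed a unit dual quaternion. Write $x=q+\tfrac{\epsilon}{2}tq$, where $t$ is the pure quaternion with vector part $\bm{t}$. Then $x\st=q$ is a unit quaternion and $x\ii=\tfrac12 tq$. This gives
\[
x\st x\ii^*+x\ii x\st^*=\tfrac12\bigl(q q^* t^* + t q q^*\bigr)=\tfrac12(t^*+t)=0,
\]
since $t$ is pure. So $x\in\mathrm{UDQ}$ by the characterization in the dual quaternion subsection. The same computation gives $x^*=q^*+\tfrac{\epsilon}{2}q^*t^*=q^*-\tfrac{\epsilon}{2}q^*t$.

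Next I expand the product using the multiplication rule $pq=p\st q\st+(p\st q\ii+p\ii q\st)\epsilon$ and $\epsilon^2=0$. Write $\ell_1=l+\epsilon m$ with $l,m$ pure quaternions built from $\hat{\bm l}_1,\bm m_1$. The standard part of $x\ell_1x^*$ is $qlq^*=\psi_q(\hat{\bm l}_1)=\bR\hat{\bm l}_1$, by Proposition~\ref{prop:q-SO3}. The dual part collects three terms:
\[
qmq^* + \tfrac12\, t\,(qlq^*) - \tfrac12\,(qlq^*)\,t .
\]
The first term equals $\bR\bm m_1$, again by Proposition~\ref{prop:q-SO3}. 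For the remaining two I use the quaternion product formula. For pure quaternions $a,b$ it gives $ab=-a\cdot b+a\times b$, hence $ab-ba=2\,a\times b$. The dual part therefore equals $\bR\bm m_1+\bm t\times \bR\hat{\bm l}_1$.

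Finally I check that this is exactly the Plücker representation of the moved line. Suppose $\ell_1$ passes through a point $\bm p$ with unit direction $\hat{\bm l}_1$, so that $\bm m_1=\bm p\times\hat{\bm l}_1$. Applying the rigid motion, the image line passes through $\bR\bm p+\bm t$ with direction $\bR\hat{\bm l}_1$. Its moment is
\[
(\bR\bm p+\bm t)\times\bR\hat{\bm l}_1=\bR(\bm p\times\hat{\bm l}_1)+\bm t\times\bR\hat{\bm l}_1 .
\]
This uses the identity $\bR\bm a\times\bR\bm b=\bR(\bm a\times\bm b)$, valid for $\bR\in\mathrm{SO}(3)$. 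The result matches the dual part computed above, and the directions match as well. Hence $\ell_2=x\ell_1x^*$. The moment is independent of which point $\bm p$ on the line is chosen, so the identification is well defined.

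I expect no real obstacle here. The one point needing care is sign bookkeeping in the dual part: the conjugate $x^*$ reverses the order of $t$ and $q$, and $t^*=-t$ for pure $t$. Both signs are needed to obtain the commutator $tv-vt$, where $v=qlq^*$, rather than an anticommutator.
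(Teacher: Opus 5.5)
Your proof is correct: the unit-norm check, the expansion giving dual part $qmq^*+\tfrac12\bigl(t\,(qlq^*)-(qlq^*)\,t\bigr)=\bR\bm m_1+\bm t\times\bR\hat{\bm l}_1$, and the comparison with the moved moment $(\bR\bm p+\bm t)\times\bR\hat{\bm l}_1$ all check out. The paper gives no argument of its own and defers to \citet[Sec.~3]{daniilidis1999hand}, whose derivation is essentially this same direct expansion; your write-up also makes explicit that the statement depends on the moment convention $\bm m=\bm p\times\hat{\bm l}$ (with $\bm m=\hat{\bm l}\times\bm p$ the translation term changes sign and one would need $(1-\tfrac{\epsilon}{2}\bm t)q$ instead).
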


Proposition \ref{prop:dq-SE3} offers an approach to represent elements in SE(3) in nearly the same way as we represent element in $\mathrm{SO}(3)$ by quaternions, where $(\bR,\bm{t})\in \text{SE(3)}$ is described by $x=(1+\frac{\epsilon}{2}\bm{t})q\in\mathrm{UDQ}$. $\phi$ can also be described as a 2-to-1 surjective group homomorphism from $\mathrm{UDQ}$ to SE(3), such that $\phi^{-1}(\bR,\bm{t})=\{-x,x\}$ for all $(\bR,\bm{t})\in \text{SE(3)}$. Particularly, when $\bm{t}=0$, $x$ becomes a pure quaternion $q$ and the SE(3) element degenerate into a $\mathrm{SO}(3)$ element accordingly. This shows a special case for Proposition \ref{prop:dq-SE3}, which is consistent with Proposition \ref{prop:q-SO3}. We will focus on studying the synchronization problem on SE(3) case in the whole work, and this can also cover results in $\mathrm{SO}(3)$ by a degeneration of SE(3) elements.

\section{Proofs in Section \ref{sec:init}}
\subsection{Proof of Proposition \ref{prop:init to init2}}

For any $\bx\in\mathrm{UDQ}^n$, let $\bX:=\bx\bx^*$. Using $\| \bA \|_F^2=\tr(\bA\bA^*)$ and $\bC^*=\bC$ (Hermitian),
\[
\|\bC-\bX\|_F^2
=\tr\!\big((\bC-\bX)(\bC-\bX)^*\big)
=\tr(\bC^2)-\tr(\bC\bX+\bX\bC)+\tr(\bX^2).
\]
Moreover, $\bX^2=\bx\bx^*\bx\bx^*=\bx(\bx^*\bx)\bx^*=n\bX$ and
$\tr(\bX)=\sum_{i=1}^n |\bx_i|^2=\bx^*\bx=n$, hence $\tr(\bX^2)=n^2$.
Therefore, minimizing $\|\bC-\bx\bx^*\|_F^2$ is equivalent to maximizing $\tr(\bC\bX+\bX\bC)$.

By the unitary decomposition of Hermitian dual quaternion matrices, $\bX=\bU\bSigma\bU^*$ with
$\bSigma=\mathrm{diag}(n,0,\dots,0)$ and the first column of $\bU$ equal to $\bu_1=\bx/\sqrt{n}$ (cf. \citep[Theorem~4.1]{qi2022dual}).
Let $\bY:=\bU^*\bC\bU=(y_{ij})$. Using invariance of the (scalar/dual-number) trace under unitary similarity,
\[
\tr(\bC\bX+\bX\bC)
=\tr\!\big(\bU(\bSigma\bY+\bY\bSigma)\bU^*\big)
=\tr(\bSigma\bY+\bY\bSigma)
=2n\cdot y_{11}.
\]
Since $y_{11}=\bu_1^*\bC\bu_1=(\bx^*\bC\bx)/n$, we obtain $\tr(\bC\bX+\bX\bC)=2\bx^*\bC\bx$.
Hence, the argmin of~(\ref{opt: LS2}) equals the argmax of (\ref{opt: LS3}), and the optimal values differ only by a factor of $2$.

\subsection{Proof of Proposition \ref{lem: init1 error}}
Let
\[
\hat z := \mathop{\arg\min}\limits_{z\in\mathrm{UDQ}} \|\bx-\hat{\bx}z\|_2 .
\]
Expanding the square and using $\|\bx\|_2^2=\|\hat\bx\|_2^2=n$ and $\|\hat\bx z\|_2=\|\hat\bx\|_2$ for all $z\in\mathrm{UDQ}$,
\[
\|\bx-\hat\bx z\|_2^2
= \|\bx\|_2^2+\|\hat\bx z\|_2^2 -\big(\bx^*\hat\bx z + (\bx^*\hat\bx z)^*\big)
=2n-\big(z^*\hat\bx^*\bx + (z^*\hat\bx^*\bx)^*\big).
\]
Hence,
\begin{equation}\label{eq:opt_zk}
\begin{aligned}
\hat z
&= \mathop{\arg\min}\limits_{z\in\mathrm{UDQ}}\|\bx-\hat\bx z\|_2
 = \mathop{\arg\max}\limits_{z\in\mathrm{UDQ}}\ \big(z^*\hat\bx^*\bx + (z^*\hat\bx^*\bx)^*\big).
\end{aligned}
\end{equation}

Write $\hat z^*=c+d\epsilon$ and $\hat\bx^*\bx=a+b\epsilon$ with $c,d,a,b\in\mathbb H$. Since $\hat z\in\mathrm{UDQ}$, we have
\begin{subequations}
\begin{equation*}
c^*c=1,
\end{equation*}
\begin{equation*}
dc^*+cd^*=0.
\end{equation*}
\end{subequations}
Moreover,
\[
z^*\hat\bx^*\bx + (z^*\hat\bx^*\bx)^*
= ca+a^*c^* + \big(cb+b^*c^*+da+a^*d^*\big)\epsilon,
\]
and thus \eqref{eq:opt_zk} is equivalent to
\begin{equation}\label{eq:abcd opt}
\begin{aligned}
\max_{c,d\in\mathbb H}\quad & ca+a^*c^*+\big(cb+b^*c^*+da+a^*d^*\big)\epsilon \\
\mathrm{s.t.}\quad & dc^*+cd^*=0,\qquad c^*c=1.
\end{aligned}
\end{equation}

Since the objective is a dual number and comparisons are made lexicographically (first by the primal part, then by the dual part), we consider first the primal (rotation) part:
\[
\max_{c\in\mathbb H}\ \ ca+a^*c^*\quad\text{s.t.}\quad c^*c=1.
\]
If $a\neq 0$, the maximizer is
\begin{equation*}
c=\frac{a^*}{|a|},
\end{equation*}
and the dual part $d$ for maximizer satisfies $dc^*+cd^*=0$, which yields a fixed dual part objective for \eqref{eq:abcd opt}. 

If \(a=0\), the lexicographic
maximization reduces to maximizing the dual component in
\eqref{eq:abcd opt}. If \(b\neq0\), the dual component is maximized by choosing
\[
c=\frac{b^*}{|b|}.
\]
For this choice of \(c\), any \(d\) satisfying the unit-dual-quaternion
constraint together with
\[
db+b^*d^*=0
\]
attains the same maximum. If \(b=0\), then the dual component is also
identically zero, and therefore any unit quaternion \(c\) gives an optimal
choice of \(\hat z\).
Thus, in all cases, the maximum is attained. Let
\[
p := \max_{z\in\mathrm{UDQ}} z^*\hat\bx^*\bx ,
\]
we have
\[
\max_{z\in\mathrm{UDQ}}
\bigl(
z^*\hat\bx^*\bx + (z^*\hat\bx^*\bx)^*
\bigr)
=
p+p^*
=
2|\hat\bx^*\bx|.
\]
Plugging this into \eqref{eq:opt_zk} yields
\begin{equation}\label{eq:d^2}
\operatorname{d}^2(\hat{\bx},\bx)
= \min_{z\in\mathrm{UDQ}}\|\bx-\hat\bx z\|_2^2
= 2n - 2|\hat\bx^*\bx|.
\end{equation}

In addition, 
\begin{equation}\label{eq:xxxx}
\begin{aligned}
\bx^*\hat{\bx}\hat{\bx}^*\bx
&\le \big|\bx^*\hat{\bx}\hat{\bx}^*\bx\big|
= \big|\langle \bx,\hat{\bx}\hat{\bx}^*\bx\rangle\big|
\le \|\bx\|_2\,\|\hat{\bx}\hat{\bx}^*\bx\|_2 \\
&= \sqrt{n}\,\|\hat{\bx}\|_2\,|\hat{\bx}^*\bx|
= n\,|\hat{\bx}^*\bx|.
\end{aligned}
\end{equation}
Here, $\bx^*\hat{\bx}\hat{\bx}^*\bx = |\hat{\bx}^*\bx|^2$ is a dual number and hence admits a total order.
The first inequality follows from \citet[Theorem 2]{qi2022dual}, and the second from the Cauchy--Schwarz inequality \citep[Proposition 4.7]{ling2022minimax}. Therefore,
\[
|\hat\bx^*\bx|\ \ge\ \frac{1}{n}\bx^*\hat\bx\hat\bx^*\bx
\quad\Longrightarrow\quad
\operatorname{d}^2(\hat{\bx},\bx)
=2n-2|\hat\bx^*\bx|
\le \frac{1}{n}\big(2n^2-2\bx^*\hat\bx\hat\bx^*\bx\big).
\]

Recall that $\bC=\hat\bx\hat\bx^*+\bDelta$. Then
\begin{align*}
2n^2-2\bx^*\hat\bx\hat\bx^*\bx
&= 2\hat\bx^*\hat\bx\hat\bx^*\hat\bx - 2\bx^*\hat\bx\hat\bx^*\bx \\
&= 2\hat\bx^*(\bC-\bDelta)\hat\bx - 2\bx^*(\bC-\bDelta)\bx \\
&\le 2\bx^*\bDelta\bx - 2\hat\bx^*\bDelta\hat\bx \\
&= (\bx+\hat\bx)^*\bDelta(\bx-\hat\bx)+(\bx-\hat\bx)^*\bDelta(\bx+\hat\bx),
\end{align*}
where the inequality uses the assumption $\bx^*\bC\bx\ge \hat\bx^*\bC\hat\bx$.
Combining the above display with \eqref{eq:d^2}--\eqref{eq:xxxx} gives
\[
\operatorname{d}^2(\hat{\bx},\bx)
\le \frac{1}{n}\Big[(\bx+\hat\bx)^*\bDelta(\bx-\hat\bx)+(\bx-\hat\bx)^*\bDelta(\bx+\hat\bx)\Big]
\le \frac{2}{n}\big|(\bx+\hat\bx)^*\bDelta(\bx-\hat\bx)\big|.
\]
Using the Cauchy--Schwarz inequality and the definition of the induced operator norm, for any compatible
$\bm u,\bm v$ we have
$|\bm u^*\bDelta \bm v|\le \|\bm u\|_2\,\|\bDelta\|_{\op}\,\|\bm v\|_2$.
Therefore,
\[
\operatorname{d}^2(\hat{\bx},\bx)
\le \frac{2}{n}\|\bx+\hat\bx\|_2\|\bDelta\|_{\op}\|\bx-\hat\bx\|_2.
\]

Finally, by right-multiplying $\bx$ with the optimal aligner $\hat z$ (which does not change the hypothesis
$\bx^*\bC\bx\ge \hat\bx^*\bC\hat\bx$ nor the value of $\operatorname{d}(\hat\bx,\bx)$), we may assume
$\operatorname{d}(\hat\bx,\bx)=\|\bx-\hat\bx\|_2$. Then $\|\bx+\hat\bx\|_2\le \|\bx\|_2+\|\hat\bx\|_2=2\sqrt{n}$ and
\begin{equation}\label{eq:prop24 ineq}
\operatorname{d}^2(\hat{\bx},\bx)
\le \frac{2}{n}\cdot 2\sqrt{n}\,\|\bDelta\|_{\op}\operatorname{d}(\hat\bx,\bx)
= \frac{4}{\sqrt{n}}\|\bDelta\|_{\op}\operatorname{d}(\hat\bx,\bx).
\end{equation}
Taking standard parts of \eqref{eq:prop24 ineq} gives
\[
\operatorname{d}\st(\bm{x},\hat{\bm{x}})^2
\le
\frac{4}{\sqrt n}
\|\bm{\Delta}\|_{\bop,\bst}
\operatorname{d}\st(\bm{x},\hat{\bm{x}}).
\]
Suppose $\|\bm{\Delta}\|_{\bop,\bst}\neq 0$. If \(\operatorname{d}\st(\bm{x},\hat{\bm{x}})=0\) and $\|\bm{\Delta}\|_{\bop,\bst}>0$, the desired bound is immediate.
Otherwise, \(\operatorname{d}_{\rm st}(\bm{x},\hat{\bm{x}})>0\), and multiplying both sides of \eqref{eq:prop24 ineq} by \(\operatorname{d}(\bm{x},\hat{\bm{x}})^{-1}\) yields
\[
\operatorname{d}(\bm{x},\hat{\bm{x}})
\le
\frac{4}{\sqrt n}
\|\bm{\Delta}\|_{\bop},
\]
which completes the proof.

\subsection{Proof of Lemma \ref{lem: normalization property}}

Let $u:=\mathcal{N}(y)\in\mathrm{UDQ}$, and write
\[
y=y_{\bst}+y_{\mathcal I}\epsilon,\qquad
z=z_{\bst}+z_{\mathcal I}\epsilon,\qquad
u=u_{\bst}+u_{\mathcal I}\epsilon,
\]
with $y_{\bst},y_{\mathcal I},z_{\bst},z_{\mathcal I},u_{\bst},u_{\mathcal I}\in\mathbb H$.
Recall that for any dual quaternion $q=q_{\bst}+q_{\mathcal I}\epsilon$,
\[
|q|^2 = q q^* = |q_{\bst}|^2 + 2\epsilon\,\mathrm{sc}(q_{\bst}q_{\mathcal I}^*),
\]
which is a nonnegative dual number; hence it admits a total order, and it suffices to prove
$|u-z|^2 \le 4|y-z|^2$.

\medskip
\noindent\textbf{Step 1: Expand $|u-z|^2$.}
Since $u,z\in\mathrm{UDQ}$, we have $|u_{\bst}|=|z_{\bst}|=1$ and
$\mathrm{sc}(u_{\bst}u_{\mathcal I}^*)=\mathrm{sc}(z_{\bst}z_{\mathcal I}^*)=0$.
Thus,
\begin{align*}
|u-z|^2
&= \big|(u_{\bst}-z_{\bst})+(u_{\mathcal I}-z_{\mathcal I})\epsilon\big|^2 \\
&= |u_{\bst}-z_{\bst}|^2 + 2\epsilon\,\mathrm{sc}\big((u_{\bst}-z_{\bst})(u_{\mathcal I}-z_{\mathcal I})^*\big) \\
&= \big(2-2\,\mathrm{sc}(u_{\bst}z_{\bst}^*)\big)
      -2\epsilon\Big(\mathrm{sc}(u_{\bst}z_{\mathcal I}^*)+\mathrm{sc}(z_{\bst}u_{\mathcal I}^*)\Big).
\end{align*}

\medskip
\noindent\textbf{Step 2: Expand $4|y-z|^2$.}
Similarly,
\begin{align*}
4|y-z|^2
&=4\big|(y_{\bst}-z_{\bst})+(y_{\mathcal I}-z_{\mathcal I})\epsilon\big|^2 \\
&=4|y_{\bst}-z_{\bst}|^2
   +8\epsilon\,\mathrm{sc}\big((y_{\bst}-z_{\bst})(y_{\mathcal I}-z_{\mathcal I})^*\big) \\
&=4\big(|y_{\bst}|^2+1-2\,\mathrm{sc}(y_{\bst}z_{\bst}^*)\big)
   +8\epsilon\Big(\mathrm{sc}(y_{\bst}y_{\mathcal I}^*)-\mathrm{sc}(y_{\bst}z_{\mathcal I}^*)-\mathrm{sc}(z_{\bst}y_{\mathcal I}^*)\Big),
\end{align*}
where we used $\mathrm{sc}(z_{\bst}z_{\mathcal I}^*)=0$ for $z\in\mathrm{UDQ}$.

\medskip
\noindent\textbf{Step 3: Compare the standard parts.}

\emph{Case 1: $y_{\bst}\neq 0$.}
By the definition of $\mathcal{N}$, we have $u_{\bst}=y_{\bst}/|y_{\bst}|$.
Let $r:=|y_{\bst}|>0$ and $s:=\mathrm{sc}(y_{\bst}z_{\bst}^*)$.
Note that $|z_{\bst}|=1$ and $|\,\mathrm{sc}(y_{\bst}z_{\bst}^*)\,|\le |y_{\bst}z_{\bst}^*|=|y_{\bst}|=r$, hence $s\in[-r,r]$.
Moreover,
\[
\mathrm{sc}(u_{\bst}z_{\bst}^*)
=\mathrm{sc}\Big(\frac{y_{\bst}}{r}z_{\bst}^*\Big)=\frac{s}{r}.
\]
Therefore, the standard parts satisfy
\begin{align*}
\big(|u-z|^2\big)_{\bst}
&=2-2\,\frac{s}{r},\\
\big(4|y-z|^2\big)_{\bst}
&=4(r^2+1-2s)=4r^2+4-8s.
\end{align*}
We claim that for all $r>0$ and $s\in[-r,r]$,
\begin{equation}\label{eq:std_part_ineq}
2-2\frac{s}{r}\ \le\ 4r^2+4-8s.
\end{equation}
Indeed, set $t:=s/r\in[-1,1]$. Then \eqref{eq:std_part_ineq} is equivalent to
\[
0\le 4r^2+2 + t(2-8r).
\]
If $r\le \frac14$, then $2-8r\ge 0$ and the right-hand side is minimized at $t=-1$, giving
$4r^2+2-(2-8r)=4r^2+8r\ge 0$.
If $r\ge \frac14$, then $2-8r\le 0$ and the right-hand side is minimized at $t=1$, giving
$4r^2+2+(2-8r)=4(r-1)^2\ge 0$.
This proves \eqref{eq:std_part_ineq}. Moreover, equality can only occur when $r=1$ and $t=1$,
i.e., $|y_{\bst}|=1$ and $\mathrm{sc}(y_{\bst}z_{\bst}^*)=1$, which implies $y_{\bst}z_{\bst}^*=1$ and hence $u_{\bst}=y_{\bst}=z_{\bst}$.

\emph{Case 2: $y_{\bst}=0$.}
In this case $\big(4|y-z|^2\big)_{\bst}=4$.
By the definition of $\Pi$, we know $u_{\bst}=y_{\mathcal I}/|y_{\mathcal I}|$ and $|u_{\bst}|=1$.
Since $z_{\bst}$ is also unit, we have $\mathrm{sc}(u_{\bst}z_{\bst}^*)\ge -1$, and thus
\[
\big(|u-z|^2\big)_{\bst}=2-2\,\mathrm{sc}(u_{\bst}z_{\bst}^*) \le 4
=\big(4|y-z|^2\big)_{\bst}.
\]

\noindent\textbf{Step 4: Conclude the dual-number inequality.}
Since $|u-z|^2$ and $4|y-z|^2$ are dual numbers and the order is lexicographic (standard part first),
the standard-part comparison implies $|u-z|^2 \le 4|y-z|^2$ whenever the standard inequality is strict.
In the only equality case from Case 1 (namely $u_{\bst}=z_{\bst}$), we have $u-z=(u_{\mathcal I}-z_{\mathcal I})\epsilon$,
hence $|u-z|^2=0$, and also $y_{\bst}=z_{\bst}$ implies $|y-z|^2=0$. Thus the inequality still holds.

Finally, since both sides are nonnegative dual numbers, taking square roots preserves the order and yields
$|u-z|\le 2|y-z|$.

\subsection{Proof of Proposition \ref{prop:component-wise proj}}
Consider any $\bu\in \mathrm{UDQ}^n$. If $\by\neq \bm{0}$, let $\mathcal{I}:=\{i:y_i \neq 0\}$.
For $i\in \mathcal{I}$, Remark \ref{remark:power thm31} has shown that $\mathcal{N}(y_i)=\Pi(y_i)$, then
\[
|\widetilde{y_i}-y_i|=|\Pi(y_i)-y_i|\leq |u_i-y_i|. 
\]
For $i\notin\mathcal{I}$, 
\[
|\widetilde{y_i}-y_i|=|\mathcal{N}(\bm{e}^*\by)-0|=1 = |u_i-y_i| 
.\]
If $\by=\bm{0}$, 
\[
\|\widetilde{\by}-\by\|_2^2=\|\bm{1}-\bm{0}\|_2^2=n=\|\bu\|^2_2=\|\bu-\by\|_2^2.
\]
Hence,
\[
\|\widetilde{\by}-\by\|^2_2
=\sum_{i\notin\mathcal{I}}|\widetilde{y_i}-y_i|^2+\sum_{i\in\mathcal{I}}|\widetilde{y_i}-y_i|^2
\leq \|\bu-\by\|_2^2
\]
for any $\bu\in \mathrm{UDQ}^n$, which shows $\widetilde{\by}=\Pi(\by)$.

\subsection{Proof of Theorem \ref{lem: init2 error}}

Let $\bx$ be a dominant right eigenvector of $\bC$ scaled such that $\|\bx\|_2^2=n$.
Since $\bx$ is defined up to a global right multiplication by any $z\in\mathrm{UDQ}$, we may choose
$z_\star\in\mathrm{UDQ}$ attaining the minimum in the definition of $\operatorname{d}(\bx,\hat{\bx})$ and
replace $\bx\leftarrow \bx z_\star$, so that
\[
\|\bx-\hat{\bx}\|_2=\operatorname{d}(\bx,\hat{\bx}).
\]
Define $\widetilde{\bx}=\Pi(\bx)$ as in~(\ref{eq:init2}). Then
\[
\operatorname{d}(\widetilde{\bx},\hat{\bx})
=\min_{z\in\mathrm{UDQ}}\|\widetilde{\bx}-\hat{\bx}z\|_2
\le \|\widetilde{\bx}-\hat{\bx}\|_2.
\]

Let $\mathcal{I}:=\{i:x_i \neq 0\}$, 
\begin{align*}
\|\Pi(\bx)-\hat{\bx}\|_2^2
&=\sum_{i\in\mathcal{I}} |\mathcal{N}(\bx_i)-\hat{\bx}_i|^2
+\sum_{i\notin\mathcal{I}} |\mathcal{N}(\bm{e}^*\bx)-\hat{\bx}_i|^2\\
&\le \sum_{i\in\mathcal{I}} 4|\bx_i-\hat{\bx}_i|^2
+4\big(n-|\mathcal{I}|\big)\\
&= \sum_{i\in\mathcal{I}} 4|\bx_i-\hat{\bx}_i|^2
+\sum_{i\notin\mathcal{I}} 4|\bx_i-\hat{\bx}_i|^2\\
&=4\|\bx-\hat{\bx}\|_2^2,
\end{align*}
where the second last equality holds from $$n-|\mathcal{I}|=\sum_{i\notin\mathcal{I}}|\hat{\bx}_i|^2=\sum_{i\notin\mathcal{I}}|\bx_i-\hat{\bx}_i|^2.$$

Next, using Lemma~\ref{lem: normalization property} componentwise and summing over $i=1,\dots,n$, we obtain
\[
\|\Pi(\bx)-\hat{\bx}\|_2^2=
\sum_{i=1}^n |\Pi(\bx_i)-\hat{\bx}_i|^2
\leq \sum_{i=1}^n 4|\bx_i-\hat{\bx}_i|^2
=4\|\bx-\hat{\bx}\|_2^2,
\]
hence $\|\widetilde{\bx}-\hat{\bx}\|_2\le 2\|\bx-\hat{\bx}\|_2$. Therefore,
\[
\operatorname{d}(\widetilde{\bx},\hat{\bx})
\le \|\widetilde{\bx}-\hat{\bx}\|_2
\le 2\|\bx-\hat{\bx}\|_2
=2\operatorname{d}(\bx,\hat{\bx})
\le 2\cdot \frac{4\|\bDelta\|_{\op}}{\sqrt{n}}
= \frac{8\|\bDelta\|_{\op}}{\sqrt{n}},
\]
where the last inequality follows from Proposition \ref{lem: init1 error}.

\section{Proof of Theorem \ref{estimation perf}}

The proof for Theorem \ref{estimation perf} is closely based on the following results.

\begin{lemma}
\label{lem:Pi_scale_invariance}
For any $\by\in \mathbb{DH}^n$ and any real scalar $a>0$, the projection $\Pi(\cdot)$ onto $\mathrm{UDQ}^n$ satisfies
\[
\Pi(a\by)=\Pi(\by).
\]
\end{lemma}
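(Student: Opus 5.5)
The plan is to reduce the claim to the scalar level and then use the explicit closed form of $\Pi$ from Proposition~\ref{prop:component-wise proj}. Since $\Pi$ acts componentwise (with a special rule only for zero entries), it suffices to prove two things. First, $\mathcal{N}(a y)=\mathcal{N}(y)$ for every nonzero $y\in\mathbb{DH}$ and every real $a>0$. Second, the zero/nonzero pattern of $a\by$ matches that of $\by$, and the fallback rule for zero entries is unchanged by the scaling.

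For the scalar claim, write $y=y\st+y\ii\epsilon$, so that $ay=(ay\st)+(ay\ii)\epsilon$. I split into the two cases of the definition of $\mathcal{N}$.

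If $y\st\neq 0$, then $ay\st\neq0$ and $|ay\st|=a|y\st|$. Hence
\[
\frac{ay\st}{|ay\st|}=\frac{y\st}{|y\st|},
\qquad
\frac{ay\ii}{|ay\st|}=\frac{y\ii}{|y\st|}.
\]
Substituting these into the formula for $u\ii$ shows every term is unchanged, because each ratio is homogeneous of degree zero in $a$. This uses only that $a$ is a positive real and therefore commutes with quaternions.

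If $y\st=0$ and $y\ii\neq0$, then $ay\st=0$ as well, and $ay\ii/|ay\ii|=y\ii/|y\ii|$. The condition $\mathrm{sc}((ay\ii)^*u\ii)=0$ holds exactly when $\mathrm{sc}(y\ii^*u\ii)=0$, since $a>0$. So the admissible sets coincide, and the same choice can be made for both.

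For the vector claim, note that $ay_i=0$ if and only if $y_i=0$, because $a\neq0$. If $\by=\bm 0$, both projections equal $\bm 1$ by convention. Otherwise, for zero entries the rule uses $\mathcal{N}(\bm e^*\by)$ with any $\bm e$ satisfying $\bm e^*\by\neq0$. Any such $\bm e$ also satisfies $\bm e^*(a\by)=a\,\bm e^*\by\neq0$. The scalar claim then gives
\[
\mathcal{N}(a\,\bm e^*\by)=\mathcal{N}(\bm e^*\by).
\]
Combining the cases gives $\Pi(a\by)=\Pi(\by)$ entrywise.

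The only delicate point is that $\mathcal{N}$ and $\Pi$ can be set-valued: in the degenerate case through the free choice of $u\ii$, and for zero entries through the choice of $\bm e$. I will phrase the result as equality of the solution sets, with matching choices. Alternatively, one can argue directly from the argmin definition: $\|\bu-a\by\|^2$ and $\|\bu-\by\|^2$ need not have the same minimizers in general, so the closed form, not the variational definition, is the right tool here.
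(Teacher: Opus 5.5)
Your proof is correct and follows essentially the same route as the paper. The paper also proves scale invariance of $\mathcal{N}$ separately in the cases $y\st\neq 0$ and $y\st=0$, then transfers it entrywise to $\Pi$, reusing the same $\bm{e}$ for zero entries and the convention $\Pi(\bm 0)=\bm 1$.

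One correction to your closing aside: the variational route does work. For $\bu\in\mathrm{UDQ}^n$ one has $\|\bu-a\by\|_2^2=n+a^2\|\by\|_2^2-a(\bu^*\by+\by^*\bu)$. Adding a constant dual number and multiplying by a real $a>0$ both preserve the lexicographic order, so the argmin sets for $a\by$ and $\by$ coincide exactly. This argument also handles the set-valuedness issue automatically.
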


This result implies that $\Pi(\cdot)$ depends only on the direction of its argument, not its magnitude. Thus, scaling the input by a positive real scalar leaves the projection unchanged.

\begin{lemma}\label{lem:general projection 1}
    For any $ x,y  \in {\mathbb{DH}^n}$, we have
    $$\left \Vert \Pi(\bx+\Pi(\bx)+\by) -\Pi(\bx)\right \Vert_2 \leq 2 \Vert \by \Vert _2.$$  
\end{lemma}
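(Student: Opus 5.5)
The plan is to reduce the vector statement to a componentwise scalar inequality, then prove that inequality with the metric-projection property of $\mathcal{N}$ from Remark~\ref{remark:power thm31}, in the same spirit as Lemma~\ref{lem: normalization property}. Write $\bu:=\Pi(\bx)\in\mathrm{UDQ}^n$ and $\bw:=\bx+\bu+\by$. By Proposition~\ref{prop:component-wise proj}, $\Pi$ acts componentwise (via $\mathcal{N}$) on nonzero entries. It therefore suffices to show, for each $i$,
\[
|\Pi(w_i)-u_i|^2\le 4|y_i|^2,
\]
as dual numbers, and then sum over $i$. Since both sides are nonnegative dual numbers, taking square roots preserves the lexicographic order, exactly as in Step~4 of the proof of Lemma~\ref{lem: normalization property}.

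For a fixed component, set $v:=x_i+u_i$ with $u_i=\mathcal{N}(x_i)$. The key observation is that $v$ is a positive dual-number multiple of $u_i$. If $x_i\st\neq0$, then $x_i=|x_i|\,u_i$ with $|x_i|$ appreciable and positive, because $\mathcal{N}$ is polar normalization. Hence $v=(1+|x_i|)u_i$ with $\rho:=1+|x_i|$ and $\rho\st\ge 1$. If $x_i=0$, then $v=u_i$. The degenerate case $x_i\st=0\neq x_i\ii$ also gives $\rho\st=1$ after a short check. Next, Lemma~\ref{lem:Pi_scale_invariance} (extended to dual-number scalars with positive standard part, or by dividing by $\rho\st$ and treating the dual correction as a perturbation) lets me rewrite
\[
\Pi(v+y_i)=\mathcal{N}\bigl(u_i+\rho^{-1}y_i\bigr).
\]
Applying Lemma~\ref{lem: normalization property} with $z=u_i$ then gives
\[
|\mathcal{N}(u_i+\rho^{-1}y_i)-u_i|\le 2|\rho^{-1}y_i|\le 2|y_i|,
\]
where the last step uses $\rho\st\ge1$. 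If $v+y_i=0$, the fallback branch of Proposition~\ref{prop:component-wise proj} applies; then $y_i=-v$ and $|y_i|\ge1$, while $|\Pi(\cdot)-u_i|\le 2$, so the bound still holds.

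The step I expect to be the main obstacle is making ``$x_i=|x_i|u_i$'' and ``$|\rho^{-1}y_i|\le|y_i|$'' rigorous at the level of dual parts. Lexicographic comparison handles the cases where the standard parts differ strictly. When the standard parts are equal, the dual parts of $|\rho^{-1}|$ can carry a sign, so I would treat that tie case directly, as in Case~1 of Lemma~\ref{lem: normalization property}. If that becomes messy, the fallback is to compare standard parts strictly, where $\rho\st\ge1$ gives $|\rho^{-1}y_i|\st\le|y_i|\st$ at once. In the equality case $|y_i|\st=0$, I would verify directly that $y_i\st=0$ forces $\Pi(w_i)\st=u_i\st$, so both sides reduce to infinitesimal quantities. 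These can then be compared through the explicit formula for $u\ii$ in the definition of $\mathcal{N}$, whose dual part is a projection of $y_i\ii/\rho\st$ with norm at most $|y_i\ii|$.
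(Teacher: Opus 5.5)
Your proposal is correct, and it takes a genuinely different route from the paper.

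\textbf{The paper's route.} The paper argues at the vector level in the style of Liu--Yue--So. With $\bz=\Pi(\bx+\Pi(\bx)+\by)$, it tests optimality of $\bz$ against the feasible point $\Pi(\bx)$, and optimality of $\Pi(\bx)$ against $\bz$. Since every point of $\mathrm{UDQ}^n$ has squared norm $n$, these combine into $\langle\by,\bz-\Pi(\bx)\rangle\ge\frac12\|\bz-\Pi(\bx)\|_2^2$, and Cauchy--Schwarz closes the argument. That proof never uses the formula for $\mathcal{N}$ and applies verbatim to projection onto any constant-norm set. The paper even notes that Lemma~\ref{lem: normalization property} reappears as a limiting case of Lemma~\ref{lem:general projection 2}.

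\textbf{Your route.} You reverse that dependence and reduce everything to the scalar bound of Lemma~\ref{lem: normalization property}. The ingredients are:
\begin{itemize}
\item separability of $\Pi$;
\item the polar identity $x_i=|x_i|\,\mathcal{N}(x_i)$, which also holds when $x_{i,\mathrm{st}}=0$;
\item invariance of $\mathcal{N}$ under dual scalars $\rho$ with $\rho_{\mathrm{st}}>0$.
\end{itemize}
The invariance is a one-line check. The extra term $\rho_{\mathcal{I}}q_{\mathrm{st}}$ in $(\rho q)_{\mathcal{I}}$ is parallel to $q_{\mathrm{st}}$. It is therefore killed by the tangent projection $P_{u}(\xi)=\xi-u\,\mathrm{sc}(u^*\xi)$, with $u=q_{\mathrm{st}}/|q_{\mathrm{st}}|$, through which $\mathcal{N}$ computes its dual part.

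\textbf{What each buys.} Your route costs some $\mathrm{UDQ}$-specific case analysis. In exchange it gives explicit control of lexicographic ties. When $\by$ and $\bz-\Pi(\bx)$ are both infinitesimal, the paper's inequality $\frac12\|\bz-\Pi(\bx)\|_2^2\le\|\by\|_2\|\bz-\Pi(\bx)\|_2$ reads $0\le 0$ and says nothing about dual parts. Your identity $\Pi(w_i)-u_i=P_{u_{i,\mathrm{st}}}(\rho_{\mathrm{st}}^{-1}y_{i,\mathcal{I}})\,\epsilon$ for $y_{i,\mathrm{st}}=0$ settles that case directly, even with constant $1$.

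\textbf{Two details to tighten.}
\begin{itemize}
\item \emph{The tie in $|\rho^{-1}y_i|\le|y_i|$.} Write $|\rho^{-1}y_i|=\rho^{-1}|y_i|$ and note $(1-\rho^{-1})|y_i|\ge 0$. This needs $\rho\ge1$ in the lexicographic order, which holds because $|x_i|\ge0$; $\rho_{\mathrm{st}}\ge1$ alone is not enough. Your standard-part fallback misses the case $x_{i,\mathrm{st}}=0\ne x_{i,\mathcal{I}}$ with $y_{i,\mathrm{st}}\ne0$. There $\rho=1+|x_{i,\mathcal{I}}|\epsilon$, the standard parts tie, and what you actually use is $\rho_{\mathcal{I}}\ge0$.
\item \emph{Summing over components.} Do not go through the squared inequalities. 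If $\by_{\mathrm{st}}=0$, then $\sum_i|y_i|^2=0$, while by definition $\|\by\|_2=\|\by_{\mathcal{I}}\|_2\,\epsilon$. So ``square, sum, take roots'' is vacuous in that case. Instead, sum the unsquared componentwise bounds. When $|y_i|$ is infinitesimal, the bound forces $(\Pi(w_i)-u_i)_{\mathrm{st}}=0$ and $|(\Pi(w_i)-u_i)_{\mathcal{I}}|\le 2|y_{i,\mathcal{I}}|$ in $\mathbb{R}$, and these real inequalities add up directly.
\end{itemize}
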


This lemma can be regarded as a analogical version of Lemma 2 given in \citet{liu2023unified}. Although some slight differences were demonstrated by the unit dual quaternion set, we can still use a semblable way to give the proof here. As an extension of Lemma \ref{lem:general projection 1}, we have  

\begin{lemma}\label{lem:general projection 2}
For any $\bm{p},\bm{q}\in\mathbb{DH}^n$ and $\bm{r}\in\mathrm{UDQ}^n$, we have
\begin{equation}\label{eq: projection property 2}
\big\|\Pi(\bm{p}+\bm{q})-\bm{r}\big\|_2
\le 2\|\bm{q}-\bm{r}\|_2 + 3\big\|\Pi(\bm{p})-\bm{r}\big\|_2.
\end{equation}
\end{lemma}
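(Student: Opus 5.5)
The plan is to reduce Lemma~\ref{lem:general projection 2} to Lemma~\ref{lem:general projection 1} by writing the argument $\bm{p}+\bm{q}$ in the form required there. Set $\bx:=\bm{p}-\Pi(\bm{p})$, so that $\Pi(\bm{p})$ plays the role of $\Pi(\bx)$. Rather than project $\bx$ itself, we will rely on the structure $\bm{p}+\bm{q}=\bm{p}+\bm{r}+(\bm{q}-\bm{r})$. Lemma~\ref{lem:general projection 1} asserts that perturbing an argument of the form $\bx+\Pi(\bx)$ by $\by$ moves the projection by at most $2\|\by\|_2$. The task is therefore to find a vector $\bm{w}$ with $\bm{w}+\Pi(\bm{w})$ close to $\bm{p}+\bm{q}$ and $\Pi(\bm{w})$ close to $\bm{r}$.

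The concrete choice is $\bm{w}:=\bm{p}$, where $\Pi(\bm{p})$ is taken as a fixed element of $\mathrm{UDQ}^n$. Then
\[
\bm{p}+\bm{q}=\bm{p}+\Pi(\bm{p})+\by,\qquad \by:=\bm{q}-\Pi(\bm{p})=(\bm{q}-\bm{r})+(\bm{r}-\Pi(\bm{p})).
\]
Applying Lemma~\ref{lem:general projection 1} with $\bx=\bm{p}$ gives
\[
\|\Pi(\bm{p}+\bm{q})-\Pi(\bm{p})\|_2\le 2\|\bm{q}-\bm{r}\|_2+2\|\Pi(\bm{p})-\bm{r}\|_2,
\]
using the triangle inequality for the dual-number-valued $2$-norm. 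A further triangle inequality,
\[
\|\Pi(\bm{p}+\bm{q})-\bm{r}\|_2\le\|\Pi(\bm{p}+\bm{q})-\Pi(\bm{p})\|_2+\|\Pi(\bm{p})-\bm{r}\|_2,
\]
then yields exactly $2\|\bm{q}-\bm{r}\|_2+3\|\Pi(\bm{p})-\bm{r}\|_2$, which is the constant pattern in \eqref{eq: projection property 2}.

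The main obstacle is justifying the triangle inequalities in the dual-number setting. The norm $\|\cdot\|_2$ is dual-number valued and compared lexicographically, and its definition switches when the standard part vanishes. I will verify subadditivity, $\|\bm{a}+\bm{b}\|_2\le\|\bm{a}\|_2+\|\bm{b}\|_2$, in the lexicographic order, as is implicitly used elsewhere (e.g.\ in the proof of Proposition~\ref{lem: init1 error}). I will handle it by cases. If the standard parts of the two sides differ, the real triangle inequality on standard parts decides the comparison. If they are equal, real Cauchy--Schwarz equality holds, which forces $\bm{a}\st$ and $\bm{b}\st$ to be nonnegatively proportional, and I will then check the dual parts directly. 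The degenerate case $\bm{a}\st=\bm{0}$ or $\bm{b}\st=\bm{0}$ uses the infinitesimal branch of the definition.

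A secondary point is that Lemma~\ref{lem:general projection 1} must hold for an arbitrary choice of $\Pi(\bx)$ when the projection is set-valued. Here it is single-valued by Proposition~\ref{prop:component-wise proj} up to the convention at zero entries, and the same selection is used consistently, so I expect no difficulty.
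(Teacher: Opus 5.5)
Your argument is correct and is the paper's proof: apply Lemma~\ref{lem:general projection 1} with $\bx=\bm{p}$ and $\by=\bm{q}-\Pi(\bm{p})$, then use the triangle inequality twice. Your check that subadditivity holds for the dual-number-valued norm in the lexicographic order covers a point the paper leaves implicit; just delete the unused opening definition $\bx:=\bm{p}-\Pi(\bm{p})$, which is misleading because $\Pi(\bm{p}-\Pi(\bm{p}))\neq\Pi(\bm{p})$ in general.
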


By letting $\bm{p}=c\br$ for $c\in \mathbb{R}_{++}$ in (\ref{eq: projection property 2}), and forcing $c$ to zero, we can find that Lemma \ref{lem:general projection 2} becomes 
\begin{equation}
\label{eq:overall projection stability}
    \|\Pi(\bm{q})-\bm{r}\|_2\leq2\|\bm{q}-\bm{r}\|_2,
\end{equation}
which coincides with the result in Lemma \ref{lem: normalization property}.

\begin{lemma}
\label{lem:general projection dual}
Let \(r\in{\rm UDQ}^n\) and \(h\in\mathbb{DH}^n\). Assume that
\begin{equation}
    \min_{1\le i\le n}|(r_i+h_i)_{\rm st}|\ge \frac12,
\label{eq:projection-standard-bound}
\end{equation}
then
\begin{equation}
\left\|
\bigl(\Pi(\br+\bh)-\br\bigr)_{\mathcal{I}}
\right\|_2
\le
2\|\bh_{\mathcal{I}}\|_2
+
6\max_{1\le i\le n}|(r_i)_{\mathcal{I}}|\,\|\bh_{\rm st}\|_2 .
\label{eq:projection-dual-stability}
\end{equation}
\end{lemma}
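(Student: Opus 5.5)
The plan is to work componentwise and reduce everything to a scalar estimate. Because of the assumption \eqref{eq:projection-standard-bound}, every entry $y_i:=r_i+h_i$ is appreciable. Proposition~\ref{prop:component-wise proj} then gives $\Pi(\br+\bh)_i=\mathcal{N}(y_i)$ for each $i$. By the explicit formula for $\mathcal{N}$, with $\rho:=|y_{\rm st}|\ge \tfrac12$ (dropping the index $i$), we have $u:=\mathcal{N}(y)$ with
\[
u_{\rm st}=\frac{y_{\rm st}}{\rho},\qquad u_{\mathcal I}=\frac{y_{\mathcal I}}{\rho}-\frac{y_{\rm st}}{\rho}\,\mathrm{sc}\Big(\frac{y_{\rm st}^*y_{\mathcal I}}{\rho^2}\Big).
\]
It therefore suffices to prove the scalar bound
\[
|u_{\mathcal I}-r_{\mathcal I}|\le 2|h_{\mathcal I}|+6|r_{\mathcal I}|\,|h_{\rm st}|
\]
for a single $r\in\mathrm{UDQ}$ and $h\in\mathbb{DH}$ with $|r_{\rm st}+h_{\rm st}|\ge\tfrac12$. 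Summing the squares and using $(a+b)^2$-type triangle inequalities in $\mathbb{R}^n$, together with $|r_{i,\mathcal I}|\le\max_j|r_{j,\mathcal I}|$, then gives \eqref{eq:projection-dual-stability}.

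For the scalar bound, I would observe that the map $P_s(w):=w-s\,\mathrm{sc}(s^*w)$ (with $|s|=1$) is the orthogonal projection of $w\in\mathbb{H}\cong\mathbb{R}^4$ onto the complement of $s$. Hence $u_{\mathcal I}=\rho^{-1}P_{u_{\rm st}}(y_{\mathcal I})$. Since $r\in\mathrm{UDQ}$, we have $\mathrm{sc}(r_{\rm st}^*r_{\mathcal I})=0$, so $r_{\mathcal I}=P_{r_{\rm st}}(r_{\mathcal I})$. Writing $y_{\mathcal I}=r_{\mathcal I}+h_{\mathcal I}$, I would split
\[
u_{\mathcal I}-r_{\mathcal I}=\rho^{-1}P_{u_{\rm st}}(h_{\mathcal I})+\rho^{-1}\big(P_{u_{\rm st}}-P_{r_{\rm st}}\big)(r_{\mathcal I})+(\rho^{-1}-1)\,P_{r_{\rm st}}(r_{\mathcal I}).
\]
The three terms are handled as follows.
\begin{itemize}
\item The first term is at most $\rho^{-1}|h_{\mathcal I}|\le 2|h_{\mathcal I}|$, since $P$ is nonexpansive and $\rho\ge\tfrac12$.
\item The third term equals $|1-\rho|\rho^{-1}|r_{\mathcal I}|$. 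Because $\big|\rho-1\big|=\big||r_{\rm st}+h_{\rm st}|-|r_{\rm st}|\big|\le|h_{\rm st}|$, it is bounded by $2|h_{\rm st}||r_{\mathcal I}|$.
\item For the second term, $\|P_{u}-P_{r}\|\op\le 2|u_{\rm st}-r_{\rm st}|$, using $P_u-P_r=r\,\mathrm{sc}(r^*\cdot)-u\,\mathrm{sc}(u^*\cdot)$ and adding and subtracting $u\,\mathrm{sc}(r^*\cdot)$. Lemma~\ref{lem: normalization property} applied to standard parts (or the elementary bound $|y_{\rm st}/\rho-r_{\rm st}|\le 2|h_{\rm st}|$) gives $|u_{\rm st}-r_{\rm st}|\le2|h_{\rm st}|$. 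The second term is therefore at most $\rho^{-1}\cdot 2\cdot 2|h_{\rm st}||r_{\mathcal I}|\le 8|h_{\rm st}||r_{\mathcal I}|$.
\end{itemize}

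The main obstacle is the constant: this crude split yields $10$ rather than $6$. To sharpen it, I would avoid the $\rho^{-1}$ factor in the second term by combining the second and third terms into $\rho^{-1}P_{u_{\rm st}}(r_{\mathcal I})-r_{\mathcal I}$. Using $\mathrm{sc}(r_{\rm st}^*r_{\mathcal I})=0$, this equals
\[
(\rho^{-1}-1)\,r_{\mathcal I}-\rho^{-1}u_{\rm st}\,\mathrm{sc}\big((u_{\rm st}-r_{\rm st})^*r_{\mathcal I}\big),
\]
and the last factor is bounded by $|u_{\rm st}-r_{\rm st}||r_{\mathcal I}|\le 2|h_{\rm st}||r_{\mathcal I}|$. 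This gives $2|h_{\rm st}||r_{\mathcal I}|+2\cdot2|h_{\rm st}||r_{\mathcal I}|=6|h_{\rm st}||r_{\mathcal I}|$, which matches the stated constant. So the real work is verifying the sharp bound $|u_{\rm st}-r_{\rm st}|\le 2|h_{\rm st}|$ for the normalization of real quaternions. This follows because $|y_{\rm st}/\rho-r_{\rm st}|\le |y_{\rm st}/\rho-y_{\rm st}|+|y_{\rm st}-r_{\rm st}|=|1-\rho|+|h_{\rm st}|\le 2|h_{\rm st}|$.
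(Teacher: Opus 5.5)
Your proof is correct and follows essentially the same route as the paper. Both arguments reduce componentwise via Proposition~\ref{prop:component-wise proj} and use the tangent-projection form $u_{\mathcal I}=\rho^{-1}P_{u_{\rm st}}(y_{\mathcal I})$. Both then split into three terms controlled by $\rho\ge\tfrac12$, $|\rho^{-1}-1|\le 2|h_{\rm st}|$ and $|u_{\rm st}-r_{\rm st}|\le 2|h_{\rm st}|$, and finish with Minkowski's inequality. The only cosmetic difference is how the extra $\rho^{-1}$ is avoided: the paper writes $\rho^{-1}P_{u_{\rm st}}(r_{\mathcal I})-r_{\mathcal I}=(\rho^{-1}-1)P_{u_{\rm st}}(r_{\mathcal I})+(P_{u_{\rm st}}-P_{r_{\rm st}})r_{\mathcal I}$, which is the same quantity as your sharpened grouping and yields the same constant $2+4=6$.
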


To give a full picture of the proof for Theorem \ref{estimation perf}, we introduce the following lemma that demonstrates geometric properties of $\mathrm{UDQ}^n$.

\begin{lemma}\label{lem: geometric property}
For any $\bx^k\in\mathrm{UDQ}^n$, there exists an optimizer
\begin{equation}\label{eq: error bound}
z_k \in \argmin_{z\in\mathrm{UDQ}} \|\bx^k-\hat{\bx}z\|_2,
\end{equation}
we have
\[
\big|\hat{\bx}^*\bx^k-n z_k\big| = \frac{1}{2} \operatorname{d}(\bx^k,\hat{\bx})^2 .
\]
\end{lemma}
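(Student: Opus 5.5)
The plan is to reuse the computation from the proof of Proposition~\ref{lem: init1 error}. Since $\bx^k\in\mathrm{UDQ}^n$ and $\hat{\bx}\in\mathrm{UDQ}^n$, we have $\|\bx^k\|_2^2=\|\hat{\bx}z\|_2^2=n$ for every $z\in\mathrm{UDQ}$. Expanding the square then gives
\[
\|\bx^k-\hat{\bx}z\|_2^2=2n-\bigl(z^*\hat{\bx}^*\bx^k+(z^*\hat{\bx}^*\bx^k)^*\bigr).
\]
Existence of a minimizer $z_k$ follows verbatim from the lexicographic case analysis given there. Write $a+b\epsilon:=\hat{\bx}^*\bx^k$. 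In the case $a\neq0$, the minimizer has standard part $c^*=a/|a|$ and any admissible dual part. The cases $a=0$ (with $b\neq0$ or $b=0$) are handled in the same way. This gives \eqref{eq:d^2}:
\[
\operatorname{d}(\bx^k,\hat{\bx})^2=2n-2|\hat{\bx}^*\bx^k|.
\]
Equivalently, at the optimizer, $z_k^*\hat{\bx}^*\bx^k$ is a nonnegative dual number equal to $|\hat{\bx}^*\bx^k|$.

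The key step is to show that $w:=z_k^*\hat{\bx}^*\bx^k$ is \emph{self-adjoint}, i.e.\ a dual number, and not merely that its scalar part is maximal. For the standard part this is clear: $c a = a^*a/|a|=|a|$ is real. For the dual part, $w_{\mathcal I}=cb+da$. I would use the first-order optimality of $z_k$ over the tangent directions of $\mathrm{UDQ}$. The dual parts $d$ admissible for the fixed $c$ form the affine set $\{d: dc^*+cd^*=0\}=\{c\,\eta:\ \eta\ \text{pure imaginary}\}$. Since the dual-part objective is linear in $d$, it is constant on this set, so the dual part of $w$ carries no constraint from the argmax. Hence the optimizer alone does not force $w_{\mathcal I}$ to be real. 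Instead, I would choose the optimizer inside the argmin set so that it is. Concretely, I would take $z_k^*=c+d\epsilon$ with $d$ solving $\mathrm{Im}(cb+da)=0$. Writing $d=c\eta$ gives $cb+c\eta a=c(b+\eta a)$, and multiplying by $c$ from the appropriate side reduces the condition to a linear equation in the imaginary quaternion $\eta$. This equation is solvable when $a\neq0$, since $\eta\mapsto \mathrm{Im}(c\eta a)$ is a bijection on pure imaginaries: $ca=|a|$ is real, so $c\eta a=c\eta c^*|a|$ is a rotation of $\eta$. This is the step I expect to be the main obstacle. The statement asserts existence of a suitable optimizer, which matches this choice. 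The degenerate case $a=0$ needs a separate, cruder argument, and there the identity can be checked directly from $\operatorname{d}_{\bst}^2=2n$.

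With this choice, $w=|\hat{\bx}^*\bx^k|$ is a dual number, and $|z_k|=1$ implies
\[
|\hat{\bx}^*\bx^k-nz_k|=|z_k^*\hat{\bx}^*\bx^k-n|=|w-n|=n-w,
\]
where the last equality uses $w\le n$ from the Cauchy--Schwarz step \eqref{eq:xxxx}, applied with $\|\bx^k\|_2^2=n$. Finally, substituting $n-w=\tfrac12\operatorname{d}(\bx^k,\hat{\bx})^2$ from \eqref{eq:d^2} yields the claim. Care is needed with the dual absolute value when $n-w$ has zero standard part. In that case I would verify $|\,\cdot\,|$ of the infinesimal $(n-w)_{\mathcal I}\epsilon$ directly, using that $(n-w)_{\mathcal I}\ge0$ by the lexicographic inequality.
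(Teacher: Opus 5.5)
Your proposal is correct and follows essentially the same route as the paper: reduce to $|z_k^*\hat{\bx}^*\bx^k-n|$, use the freedom in the dual part of the aligner to make $p_k=z_k^*\hat{\bx}^*\bx^k$ a dual number, and conclude from $0\preceq p_k\preceq n$. (The paper sets $d_ka_k=-\mathrm{im}(a_k^*b_k/|a_k|)$ directly, which is equivalent to your choice of $\eta$.) The one loose end is the case $a=0$: checking $\operatorname{d}_{\bst}^2=2n$ alone does not verify the dual part. No cruder argument is needed there, though, since the optimal choice $c=b^*/|b|$ already gives $w=|b|\epsilon$, a dual number, so your general chain applies verbatim.
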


The right hand side of (\ref{eq: error bound}) is the squared distance between $\bx^k$ and $\hat{\bx}$ after alignment, thus Lemma \ref{lem: geometric property} can be interpreted as an error bound condition. This lemma implies that sequence $\{\bx^{(k)}\}_{k\geq 0}$ generated by DQGPM also satisfies an error bound condition, which plays an important role in proving Theorem \ref{estimation perf} by bounding estimation error with distance terms. Additionally, as a degenerated case for SE(3), elements in $\mathrm{SO}(3)$ represented by unit dual quaternions with zero infinitesimal part should also satisfy (\ref{eq: error bound}). This extension on $\mathrm{SO}(3)$ meets the error bound attained for $\mathrm{SO}(d)$ in \citet{liu2023unified}, but  (\ref{eq: error bound}) is tighter for $\mathrm{SO}(3)$ case since the equality always holds.

Lastly, the below theorem gives a recursive relation for the dual part of the sequence \(\{\bx^{(k)}\}_{k\geq 0}\) generated by DQGPM.

\begin{lemma}
\label{lem:one-step-aligned-dual}
Under Assumption of Theorem \ref{estimation perf}, the aligned dual residual
\[
\delta_{\mathcal{I}}^k
=
\left\|
\bigl(\bm{x}^k-\bm{r}^k\bigr)_{\mathcal{I}}
\right\|_2, 
\quad
\text{where} 
\ \ \bm{r}^k=\hat{\bm{x}}z_k,
\]
satisfies
\begin{equation}
\begin{aligned}
\delta_{\mathcal{I}}^{k+1}
&\le
\left(
\frac{2}{\sqrt n}\operatorname{d}\st(\bm{x}^k,\hat{\bm{x}})
+
\frac{2}{n}\|\bm{\Delta}\|_{{\rm op},{\rm st}}
\right)
\delta_{\mathcal{I}}^k
\\
&\quad+
\left(
\frac{2}{n}\|\bm{\Delta}\|_{{\rm op},\mathcal{I}}
+
\frac{6B+2\gamma}{n}\|\bm{\Delta}\|_{{\rm op},{\rm st}}
+
\frac{4B+\gamma}{\sqrt n}\operatorname{d}\st(\bm{x}^k,\hat{\bm{x}})
\right)
\operatorname{d}\st(\bm{x}^k,\hat{\bm{x}})
\\
&\quad+
\frac{2}{\sqrt n}
\left(
\|\bm{\Delta}\|_{{\rm op},\mathcal{I}}
+
B\|\bm{\Delta}\|_{{\rm op},{\rm st}}
\right)
+
\frac{6B+2\gamma}{n}
\left\|
(\bm{\Delta}\hat{\bm{x}})_{\rm st}
\right\|_2 .
\end{aligned}
\label{eq:one-step-aligned-dual}
\end{equation}
\end{lemma}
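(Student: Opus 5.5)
We prove Lemma~\ref{lem:one-step-aligned-dual} by writing one DQGPM step as a perturbed projection of the aligned truth $\br^k=\hat\bx z_k$, and then applying Lemma~\ref{lem:general projection dual}. By Lemma~\ref{lem:Pi_scale_invariance}, $\bx^{k+1}=\Pi(\bC\bx^k)=\Pi(\tfrac1n\bC\bx^k)$.

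\textbf{Decomposition.} Using $\bC=\hat\bx\hat\bx^*+\bDelta$, write
\[
\tfrac1n\bC\bx^k=\br^k+\bh,\qquad
\bh:=\tfrac1n\hat\bx\bigl(\hat\bx^*\bx^k-nz_k\bigr)+\tfrac1n\bDelta(\bx^k-\br^k)+\tfrac1n\bDelta\hat\bx z_k .
\]
Assumption~\eqref{eq:assumption nondegenerate} is exactly the hypothesis~\eqref{eq:projection-standard-bound} with $\br=\br^k$, and $\max_i|(r^k_i)_{\mathcal I}|\le B$ by~\eqref{eq:assumption bound translation}. Lemma~\ref{lem:general projection dual} then gives
\[
\|(\bx^{k+1}-\br^k)_{\mathcal I}\|_2\le 2\|\bh_{\mathcal I}\|_2+6B\|\bh_{\rm st}\|_2 .
\]
Next, \eqref{eq:gauge-switching-stability} converts the left side into $\delta^{k+1}_{\mathcal I}$, at the cost of an additional term $\gamma\|(\bx^{k+1}-\br^k)_{\rm st}\|_2$. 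This term is bounded by $2\|\bh_{\rm st}\|_2$ using the standard part of~\eqref{eq:overall projection stability}. Altogether,
\[
\delta^{k+1}_{\mathcal I}\le 2\|\bh_{\mathcal I}\|_2+(6B+2\gamma)\|\bh_{\rm st}\|_2 .
\]

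\textbf{Bounding the three pieces of $\bh$.} We use the dual-number product rule $(ab)_{\mathcal I}=a_{\rm st}b_{\mathcal I}+a_{\mathcal I}b_{\rm st}$, and for each term we estimate the standard and dual parts separately.

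\emph{First term.} Lemma~\ref{lem: geometric property} gives $|\hat\bx^*\bx^k-nz_k|=\tfrac12\operatorname{d}(\bx^k,\hat\bx)^2$, and $\|\hat\bx\|_2=\sqrt n$. Its standard part is therefore at most $\tfrac1{2\sqrt n}\operatorname{d}_{\rm st}^2$. Its dual part involves $\hat\bx_{\mathcal I}$, which is controlled by $B$, together with the dual part of the scalar, which is $\operatorname{d}_{\rm st}\operatorname{d}_{\mathcal I}$. We bound $|\operatorname{d}_{\mathcal I}|$ by $\delta^k_{\mathcal I}$ plus $B$-weighted standard errors; this produces the $\tfrac{2}{\sqrt n}\operatorname{d}_{\rm st}\delta^k_{\mathcal I}$ term and the $\tfrac{4B+\gamma}{\sqrt n}\operatorname{d}_{\rm st}^2$ term.

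\emph{Second term.} The standard part is at most $\tfrac1n\|\bDelta\|_{\rm op,st}\operatorname{d}_{\rm st}$. The dual part is at most $\tfrac1n\bigl(\|\bDelta\|_{\rm op,st}\delta^k_{\mathcal I}+\|\bDelta\|_{\rm op,\mathcal I}\operatorname{d}_{\rm st}\bigr)$. These give the $\tfrac2n\|\bDelta\|_{\rm op,st}\delta^k_{\mathcal I}$, $\tfrac2n\|\bDelta\|_{\rm op,\mathcal I}\operatorname{d}_{\rm st}$ and $\tfrac{6B+2\gamma}{n}\|\bDelta\|_{\rm op,st}\operatorname{d}_{\rm st}$ contributions.

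\emph{Third term.} Right multiplication by the unit $z_k$ preserves norms, so the standard part is $\tfrac1n\|(\bDelta\hat\bx)_{\rm st}\|_2$. The dual part is bounded by $\tfrac1n\bigl(\|\bDelta\|_{\rm op,\mathcal I}\sqrt n+\|\bDelta\|_{\rm op,st}\|\hat\bx_{\mathcal I}z\|\bigr)$, which, after using $B$, yields $\tfrac{2}{\sqrt n}\bigl(\|\bDelta\|_{\rm op,\mathcal I}+B\|\bDelta\|_{\rm op,st}\bigr)$.

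Collecting all terms reproduces~\eqref{eq:one-step-aligned-dual}.

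\textbf{Main obstacle.} The hardest step is the dual part of the first term. Lemma~\ref{lem: geometric property} controls only the dual-number modulus of $\hat\bx^*\bx^k-nz_k$, not its quaternion dual component. The plan is to redo the computation of Lemma~\ref{lem: geometric property} componentwise: first show that the standard part of $\hat\bx^*\bx^k-nz_k$ is real-aligned, and then bound its dual component by $\operatorname{d}_{\rm st}\bigl(\delta^k_{\mathcal I}+B\,\operatorname{d}_{\rm st}\bigr)$. The constants need care here; this is the only place where I expect to have to adjust them.
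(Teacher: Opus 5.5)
Your skeleton is the paper's. You use the same splitting $\tfrac1n\bC\bx^k=\br^k+\bh$, and the same combination of Lemma~\ref{lem:general projection dual}, \eqref{eq:gauge-switching-stability} and the standard part of \eqref{eq:overall projection stability} to reach $\delta^{k+1}_{\mathcal I}\le 2\|\bh_{\mathcal I}\|_2+(6B+2\gamma)\|\bh_{\mathrm{st}}\|_2$. Your second piece of $\bh$ is handled correctly.

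The gap is exactly the step you flag, and the route you sketch for it does not go through. You write the first piece as $\tfrac1n\hat{\bx}w$ with $w:=\hat{\bx}^*\bx^k-nz_k$. You then bound $\hat{\bx}_{\mathrm{st}}w_{\mathcal I}$ and $\hat{\bx}_{\mathcal I}w_{\mathrm{st}}$ separately, controlling $\hat{\bx}_{\mathcal I}$ by $B$ and $w_{\mathcal I}$ by $\operatorname{d}_{\mathrm{st}}(\delta^k_{\mathcal I}+B\operatorname{d}_{\mathrm{st}})$. But \eqref{eq:assumption bound translation} bounds only the aligned dual parts $(\hat x_iz_k)_{\mathcal I}=(r^k_i)_{\mathcal I}$. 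Neither $\hat{\bx}_{\mathcal I}$ nor $(z_k)_{\mathcal I}$ is controlled. The latter enters through $w=z_k\eta^k$, i.e.\ $w_{\mathcal I}=(z_k)_{\mathrm{st}}\eta^k_{\mathcal I}+(z_k)_{\mathcal I}\eta^k_{\mathrm{st}}$. A large common ground-truth translation cancelled by the aligner makes both pieces large while $B$ stays small; only their sum is bounded. The same slip appears in your third piece: $\|\hat{\bx}_{\mathcal I}z\|$ must be read as $\|(\hat{\bx}z_k)_{\mathcal I}\|_2\le B\sqrt n$. Finally, inflating $|\operatorname{d}_{\mathcal I}|$ by ``$B$-weighted standard errors'' would push the $\operatorname{d}_{\mathrm{st}}^2$ coefficient above $(4B+\gamma)/\sqrt n$, so the stated constants would not come out.

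The missing idea is to factor through the aligned truth and a \emph{real} dual number: $\hat{\bx}\hat{\bx}^*\bx^k-n\br^k=\br^k\eta^k$ with $\eta^k:=z_k^*\hat{\bx}^*\bx^k-n$. Nothing has to be recomputed. The proof of Lemma~\ref{lem: geometric property} already selects the minimizer $z_k$, using the free choice of its dual part, so that $z_k^*\hat{\bx}^*\bx^k\in\mathbb D$. Hence $\eta^k\in\mathbb D$, and the dual-number identity $|\eta^k|=\tfrac12\operatorname{d}(\bx^k,\hat{\bx})^2$ gives, componentwise, $|\eta^k_{\mathrm{st}}|=\tfrac12\operatorname{d}_{\mathrm{st}}^2$ and $|\eta^k_{\mathcal I}|\le\operatorname{d}_{\mathrm{st}}|\operatorname{d}_{\mathcal I}|\le\operatorname{d}_{\mathrm{st}}\delta^k_{\mathcal I}$. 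The last inequality is plain Cauchy--Schwarz applied to $(\|\bu\|_2)_{\mathcal I}=\sum_i\mathrm{sc}\bigl((u_i)_{\mathrm{st}}^*(u_i)_{\mathcal I}\bigr)/\|\bu_{\mathrm{st}}\|_2$ with $\bu=\bx^k-\br^k$; no $B$ correction is needed. Then $\|(\br^k\eta^k)_{\mathcal I}\|_2\le\|(\br^k)_{\mathrm{st}}\|_2|\eta^k_{\mathcal I}|+\|(\br^k)_{\mathcal I}\|_2|\eta^k_{\mathrm{st}}|\le\sqrt n\,\operatorname{d}_{\mathrm{st}}\delta^k_{\mathcal I}+\tfrac{B\sqrt n}{2}\operatorname{d}_{\mathrm{st}}^2$. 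Together with $\|\bh_{\mathrm{st}}\|_2\le\tfrac{1}{2\sqrt n}\operatorname{d}_{\mathrm{st}}^2+\cdots$, this yields the term $\tfrac{2}{\sqrt n}\operatorname{d}_{\mathrm{st}}\delta^k_{\mathcal I}$ and the coefficient $2\cdot\tfrac{B}{2\sqrt n}+(6B+2\gamma)\cdot\tfrac{1}{2\sqrt n}=\tfrac{4B+\gamma}{\sqrt n}$, exactly as stated.
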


\begin{proof}[Proof of Theorem \ref{estimation perf}]
For each $k$, select $z_k\in\arg\min_{z\in\mathrm{UDQ}}\|\bx^k-\hat{\bx}z\|_2$ such that
Lemma~\ref{lem: geometric property} holds (such a minimizer exists by Lemma~\ref{lem: geometric property}).
Then
\[
\operatorname{d}(\bx^{k+1},\hat{\bx})
=\|\bx^{k+1}-\hat{\bx}z_{k+1}\|_2
\le \|\bx^{k+1}-\hat{\bx}z_k\|_2
= \left\|\Pi\left(\frac{2}{n}\bC\bx^k\right)-\hat{\bx}z_k\right\|_2 ,
\]
where the last equality holds from Lemma~\ref{lem:Pi_scale_invariance}.
Using Lemma~\ref{lem:general projection 2} with
\[
\bm{q}=\hat{\bx}z_k+2\left[\frac{1}{n}(\bC-\bDelta)\bx^k-\hat{\bx}z_k\right]
+\frac{2}{n}\bDelta(\bx^k-\hat{\bx}z_k),\quad
\bm{p}=\hat{\bx}z_k+\frac{2}{n}\bDelta\hat{\bx}z_k,\quad
\bm{r}=\hat{\bx}z_k,
\]
(note that $\bm{p}+\bm{q}=\frac{2}{n}\bC\bx^k$),
we obtain
\begin{align*}
\left\|\Pi\left(\frac{2}{n}\bC\bx^k\right)-\hat{\bx}z_k\right\|_2
&\le 4\left\|\frac{1}{n}(\bC-\bDelta)\bx^k-\hat{\bx}z_k
+\frac{1}{n}\bDelta(\bx^k-\hat{\bx}z_k)\right\|_2
+3\|\Pi(\bm{p})-\bm{r}\|_2 \\
&\le \frac{4}{n}\|(\bC-\bDelta)\bx^k-n\hat{\bx}z_k\|_2
+\frac{4}{n}\|\bDelta(\bx^k-\hat{\bx}z_k)\|_2
+3\|\Pi(\bm{p})-\bm{r}\|_2.
\end{align*}

For the first term, since $\bC-\bDelta=\hat{\bx}\hat{\bx}^*$ and $\|\hat{\bx}\|_2=\sqrt{n}$,
Lemma~\ref{lem: geometric property} yields
\[
\|(\bC-\bDelta)\bx^k-n\hat{\bx}z_k\|_2
=\|\hat{\bx}\|_2\,|\hat{\bx}^*\bx^k-nz_k|
=\frac{\sqrt{n}}{2}\|\bx^k-\hat{\bx}z_k\|_2^2.
\]
For the second term, by the definition of operator norm,
\[
\|\bDelta(\bx^k-\hat{\bx}z_k)\|_2\le \|\bDelta\|\op\|\bx^k-\hat{\bx}z_k\|_2.
\]
For the third term, we use the right-equivariance of the projection:
$\Pi(\by z)=\Pi(\by)z$ for $\by\in\mathbb{DH}^n$ and $z\in\mathrm{UDQ}$.
Thus,
\[
\|\Pi(\bm{p})-\bm{r}\|_2
=\left\| \Pi\left(\hat{\bx}+\frac{2}{n}\bDelta\hat{\bx}\right)z_k-\hat{\bx}z_k\right\|_2
=\left\|\Pi\left(\hat{\bx}+\frac{2}{n}\bDelta\hat{\bx}\right)-\hat{\bx}\right\|_2
\le 2\left\|\frac{2}{n}\bDelta\hat{\bx}\right\|_2,
\]
where the inequality follows from Lemma \ref{lem:general projection 2}.
Combining the bounds gives
\begin{equation}
\label{eq:linear convergence 1}
\operatorname{d}(\bx^{k+1},\hat{\bx})
\le \left(\frac{2}{\sqrt{n}}\operatorname{d}(\bx^{k},\hat{\bx})
+\frac{4}{n}\|\bDelta\|\op\right)\operatorname{d}(\bx^{k},\hat{\bx})
+\frac{12}{n}\|\bDelta \hat{\bx}\|_2.
\end{equation}

First, we analyse the standard part. Recall that
inequality~\eqref{eq:linear convergence 1} is in the
lexicographic order of dual numbers. Hence it directly implies the following
recursive relation for the standard component:
\[
\operatorname{d}\st(\bm{x}^{k+1},\hat{\bm{x}})
\le
\left(
\frac{2}{\sqrt n}\operatorname{d}\st(\bm{x}^k,\hat{\bm{x}})
+
\frac4n\|\bm{\Delta}\|_{{\rm op},{\rm st}}
\right)
\operatorname{d}\st(\bm{x}^k,\hat{\bm{x}})
+
\frac{12}{n}
\left\|
(\bm{\Delta}\hat{\bm{x}})_{\rm st}
\right\|_2 .
\]
At this stage, we only use the standard component of
inequality~\eqref{eq:linear convergence 1}; the infinitesimal component will
be controlled separately.
We prove by induction that, for all $k=0,1,2,\dots$,
\begin{equation}\label{eq:iteration_in_ball}
\operatorname{d}\st(\bx^k,\hat{\bx})\le \frac{\sqrt{n}}{25}. 
\end{equation}
For the base case $k=0$, the claim follows immediately from Theorem \ref{lem: init2 error} with the assumption $\|\bDelta\|_{\bop,\bst}\leq \frac{n}{350}$.
Assume next that \eqref{eq:iteration_in_ball} holds for some $k\ge 0$. It remains to show that
$\operatorname{d}\st(\bx^{k+1},\hat{\bx})\le \frac{\sqrt{n}}{25}$.
Combining the inductive hypothesis with again the noise assumptions, the standard part for right-hand side of inequality~\eqref{eq:linear convergence 1} and $\|\bDelta \hat{\bx}\|_2\leq\sqrt{n}\|\bDelta\|_{\bop}$ yield
\[
\begin{aligned}
 &\left(\frac{2}{\sqrt{n}}\operatorname{d}\st(\bx^{k},\hat{\bx})
+\frac{4}{n}\|\bDelta\|_{\bop,\bst}\right)\operatorname{d}\st(\bx^{k},\hat{\bx})
+\frac{12}{\sqrt{n}}\|\bDelta\|_{\bop,\bst} \le \frac{16}{175}\operatorname{d}\st(\bx^{k},\hat{\bx})+\frac{6}{175}\sqrt{n}
< \frac{\sqrt{n}}{25},
\end{aligned}
\]
which closes the induction and proves \eqref{eq:iteration_in_ball} for all $k\ge 0$.

Using \eqref{eq:linear convergence 1} again together with the fact that $\operatorname{d}\st(\bx^{k},\hat{\bx})\le \frac{\sqrt{n}}{25}$,
we obtain the linear recursion for standard part:
\[
\operatorname{d}\st(\bx^k,\hat{\bx})
\le \frac{16}{175}\operatorname{d}\st(\bx^{k-1},\hat{\bx})
+\frac{12}{n}\|\bDelta\hat{\bx}\|_{2,{st}}.
\]
Unrolling it gives
\[
\begin{aligned}
\operatorname{d}\st(\bx^k,\hat{\bx})
&\le \left(\frac{16}{175}\right)^k\operatorname{d}\st(\bx^0,\hat{\bx})
+\left(1+\frac{16}{175}+\left(\frac{16}{175}\right)^2+\cdots\right)\frac{12}{n}\|\bDelta\hat{\bx}\|_{2,{st}} \\ 
& \leq \left(\frac{16}{175}\right)^k\operatorname{d}\st(\bx^0,\hat{\bx})
+\frac{700}{53n}\|\bDelta \hat{\bx}\|_{2,{st}} \\
& < \left(\frac{1}{10}\right)^k\operatorname{d}\st(\bx^0,\hat{\bx})
+\frac{700}{53n}\|\bDelta \hat{\bx}\|_{2,{st}}.
\end{aligned}
\]

Second, we control the dual coefficient. Unlike the standard component, this
part is not obtained by taking the infinitesimal component of
inequality~\eqref{eq:linear convergence 1}. Instead, we use the aligned
dual residual. Let
\[
\bm{u}
=
\bm{x}^k-\hat{\bm{x}}z_k
=
\bm{u}_{\rm st}+\epsilon\bm{u}_{\mathcal I}.
\]
If \(\bm{u}_{\rm st}\neq0\), then by the definition of the dual-valued norm, we know that
\[
(\|\bm{u}\|_2)_{\mathcal I}
=
\frac{
\sum_i
{\rm sc}
\bigl(
(\bm{u}_i)_{\rm st}^*
(\bm{u}_i)_{\mathcal I}
\bigr)
}{
\|\bm{u}_{\rm st}\|_2
}.
\]
By the ordinary Cauchy--Schwarz inequality on
\(\mathbb H^n\simeq\mathbb R^{4n}\), we have
\[
\left|
(\|\bm{u}\|_2)_{\mathcal I}
\right|
\le
\|\bm{u}_{\mathcal I}\|_2 .
\]
If \(\bm{u}_{\rm st}=0\), then
\[
\|\bm{u}\|_2
=
\epsilon\|\bm{u}_{\mathcal I}\|_2,
\]
and the same inequality is immediate. Since \(z_k\) is chosen as the alignment
element in the definition of \(\operatorname{d}(\bm{x}^k,\hat{\bm{x}})\), we have
$
\operatorname{d}(\bm{x}^k,\hat{\bm{x}})
=
\|\bm{x}^k-\hat{\bm{x}}z_k\|_2$.
Therefore,
\[
\left|
\operatorname{d}_{\mathcal{I}}(\bm{x}^k,\hat{\bm{x}})
\right|
\le
\left\|
\bigl(\bm{x}^k-\hat{\bm{x}}z_k\bigr)_{\mathcal I}
\right\|_2
=
\delta_{\mathcal I}^k .
\]

It remains to bound \(\delta_{\mathcal I}^k\). By
Lemma~\ref{lem:one-step-aligned-dual}, we have
\[
\begin{aligned}
\delta_{\mathcal I}^{k+1}
&\le
\left(
\frac{2}{\sqrt n}\operatorname{d}\st(\bm{x}^k,\hat{\bm{x}})
+
\frac{2}{n}\|\bm{\Delta}\|_{{\rm op},{\rm st}}
\right)
\delta_{\mathcal I}^k
+
\left(
\frac{2}{n}\|\bm{\Delta}\|_{{\rm op},\mathcal I}
+
\frac{6B+2\gamma}{n}\|\bm{\Delta}\|_{{\rm op},{\rm st}}
+
\frac{4B+\gamma}{\sqrt n}\operatorname{d}\st(\bm{x}^k,\hat{\bm{x}})
\right)
\operatorname{d}\st(\bm{x}^k,\hat{\bm{x}})
\\
&\quad+
\frac{2}{\sqrt n}
\left(
\|\bm{\Delta}\|_{{\rm op},\mathcal I}
+
B\|\bm{\Delta}\|_{{\rm op},{\rm st}}
\right)
+
\frac{6B+2\gamma}{n}
\left\|
(\bm{\Delta}\hat{\bm{x}})_{\rm st}
\right\|_2 .
\end{aligned}
\]
The standard-part basin bound and the noise assumption imply that the
coefficient of \(\delta_{\mathcal I}^k\) is strictly contractive:
\[
\frac{2}{\sqrt n}\operatorname{d}\st(\bm{x}^k,\hat{\bm{x}})
+
\frac{2}{n}\|\bm{\Delta}\|_{{\rm op},{\rm st}}
\le
\frac{2}{25}+\frac{2}{350}
=
\frac{3}{35}
<
\frac1{10}.
\]
Moreover, by \eqref{eq:iteration_in_ball},
\[
\frac{4B+\gamma}{\sqrt n}
\operatorname{d}\st(\bm{x}^k,\hat{\bm{x}})^2
\le
\frac{4B+\gamma}{25}
\operatorname{d}\st(\bm{x}^k,\hat{\bm{x}}).
\]
Thus the one-step bound simplifies to
\[
\begin{aligned}
\delta_{\mathcal I}^{k+1}
&\le
\frac1{10}\delta_{\mathcal I}^k
+
\left(
\frac{2}{n}\|\bm{\Delta}\|_{{\rm op},\mathcal I}
+
\frac{6B+2\gamma}{n}\|\bm{\Delta}\|_{{\rm op},{\rm st}}
+
\frac{4B+\gamma}{25}
\right)
\operatorname{d}\st(\bm{x}^k,\hat{\bm{x}})
\\
&\quad+
\frac{2}{\sqrt n}
\left(
\|\bm{\Delta}\|_{{\rm op},\mathcal I}
+
B\|\bm{\Delta}\|_{{\rm op},{\rm st}}
\right)
+
\frac{6B+2\gamma}{n}
\left\|
(\bm{\Delta}\hat{\bm{x}})_{\rm st}
\right\|_2 .
\end{aligned}
\]
Unrolling this recursion gives, for every \(k\ge1\),
\[
\begin{aligned}
\delta_{\mathcal I}^k
&\le
\left(\frac1{10}\right)^k
\delta_{\mathcal I}^0
+
\left(
\frac{2}{n}\|\bm{\Delta}\|_{{\rm op},\mathcal I}
+
\frac{6B+2\gamma}{n}\|\bm{\Delta}\|_{{\rm op},{\rm st}}
+
\frac{4B+\gamma}{25}
\right)
\sum_{\ell=0}^{k-1}
\left(\frac1{10}\right)^{k-1-\ell}
\operatorname{d}\st(\bm{x}^{\ell},\hat{\bm{x}})
\\
&\quad+
\left[
\frac{2}{\sqrt n}
\left(
\|\bm{\Delta}\|_{{\rm op},\mathcal I}
+
B\|\bm{\Delta}\|_{{\rm op},{\rm st}}
\right)
+
\frac{6B+2\gamma}{n}
\left\|
(\bm{\Delta}\hat{\bm{x}})_{\rm st}
\right\|_2
\right]
\sum_{\ell=0}^{k-1}
\left(\frac1{10}\right)^{k-1-\ell}.
\end{aligned}
\]
We now substitute the standard-part estimate. From
\eqref{eq:standard-error-bound},
we obtain
\[
\begin{aligned}
\delta_{\mathcal I}^k
&\le
\left(\frac1{10}\right)^k
\delta^0\ii
+
\left(
\frac{2}{n}\|\bm{\Delta}\|_{{\rm op},\mathcal I}
+
\frac{6B+2\gamma}{n}\|\bm{\Delta}\|_{{\rm op},{\rm st}}
+
\frac{4B+\gamma}{25}
\right)
\left[
\frac{\sqrt n}{25}
k\left(\frac1{10}\right)^{k-1}
+
\frac{7000}{477n}
\left\|
(\bm{\Delta}\hat{\bm{x}})_{\rm st}
\right\|_2
\right]
\\
&\quad+
\frac{10}{9}
\left[
\frac{2}{\sqrt n}
\left(
\|\bm{\Delta}\|_{{\rm op},\mathcal I}
+
B\|\bm{\Delta}\|_{{\rm op},{\rm st}}
\right)
+
\frac{6B+2\gamma}{n}
\left\|
(\bm{\Delta}\hat{\bm{x}})_{\rm st}
\right\|_2
\right].
\end{aligned}
\]
This proves \eqref{eq:aligned-dual-error-bound} combining the assumptions on noise $\|\bm{\Delta}\|_{{\rm op},{\rm st}}$ and $\|\bm{\Delta}\|_{{\rm op},\mathcal{I}}$.  Together with
$|\operatorname{d}_{\mathcal{I}}(\bm{x}^k,\hat{\bm{x}})|
\le
\delta_{\mathcal{I}}^k$, it also gives the desired bound on
the infinitesimal coefficient of the dual-valued distance.
\end{proof}

\subsection{Proof of Lemma \ref{lem:Pi_scale_invariance}}

We first show that the normalization operator $\mathcal{N}(\cdot)$ is scale-invariant  for any nonzero dual quaternion under positive real scaling, i.e.,
\begin{equation}
\label{eq:N_scale_invariance}
\mathcal{N}(a x)=\mathcal{N}(x), \quad \forall\, x\in \mathbb{DH}\setminus\{0\},\ a\in \mathbb{R}_{++}.
\end{equation}
If $x_{\mathrm{st}}\neq 0$, then by the definition of $\mathcal{N}(\cdot)$ and $|a x\st|=a|x\st|$, the standard part satisfies
\[
\frac{(a x)\st}{|(a x)_{\mathrm{st}}|}=\frac{ax_{\mathrm{st}}}{a|x_{\mathrm{st}}|}
=\frac{x_{\mathrm{st}}}{|x_{\mathrm{st}}|},
\]
 and the image part satisfies
\[
\frac{ax\ii}{|ax\st|}-
\frac{ax\st}{|ax\st|}
\mathrm{sc}\!\left(\frac{(a x_{\mathrm{st}})^*(a x_{\mathcal{I}})}{|a x_{\mathrm{st}}|\,|a x_{\mathrm{st}}|}\right)
=\frac{x\ii}{|x\st|}-
\frac{x\st}{|x\st|}
\mathrm{sc}\!\left(\frac{x_{\mathrm{st}}^*x_{\mathcal{I}}}{|x_{\mathrm{st}}|\,|x_{\mathrm{st}}|}\right),
\]
which establishes \eqref{eq:N_scale_invariance} in this case. 

If $x_{\mathrm{st}}=0$ and $x_{\mathcal{I}}\neq 0$, similarly,
\[
\frac{(a x)_{\mathcal{I}}}{|(a x)_{\mathcal{I}}|}=\frac{a x_{\mathcal{I}}}{a|x_{\mathcal{I}}|}
=\frac{x_{\mathcal{I}}}{|x_{\mathcal{I}}|},
\]
and the feasibility condition for $u_{\mathcal I}$ is invariant under positive scaling. Hence,  \eqref{eq:N_scale_invariance} holds in this case as well.

Next, we prove that the projection $\Pi(\cdot)$ is scale-invariant under positive real scaling factor for any $\by\in\mathbb{DH}^n$.
 If $\by=\bm{0}$, then $a\by=\bm{0}$, and by definition, $\Pi(a\by)=\bm{1}=\Pi(\by)$. Otherwise, let $\tilde \by:=\Pi(\by)$ and $\widetilde{\by}_a: =\Pi(a\by)$. 
For each index $i$,
 if $y_i\neq 0$, then $(a\by)_i\neq 0$, and by \eqref{eq:N_scale_invariance},
\[
\bigl[\widetilde{\by}_{a}\bigr]_i=\mathcal{N}\bigl((a\by)_i\bigr)=\mathcal{N}(a y_i)=\mathcal{N}(y_i)=\tilde y_i.
\]
If $y_i=0$, the definition uses $\mathcal{N}(\bm{e}^*\by)$ for any $\bm{e}$ with $\bm{e}^*\by\neq 0$. Since $a>0$, the same $\bm{e}$ satisfies
$\bm{e}^*(a\by)=a(\bm{e}^*\by)\neq 0$, and again by \eqref{eq:N_scale_invariance},
\[
\bigl[\widetilde{\by}_{a}\bigr]_i=\mathcal{N}\bigl(\bm{e}^*(a\by)\bigr)=\mathcal{N}\bigl(a(\bm{e}^*\by)\bigr)=\mathcal{N}(\bm{e}^*\by)=\tilde y_i.
\]
Thus, $\Pi(a\by)=\Pi(\by)$ for all $a\in\mathbb{R}_{++}$ and $\by\in\mathbb{DH}^n$.

\subsection{Proof of Lemma \ref{lem:general projection 1}}

\begin{proof}
    \noindent\textbf{Notation used in this proof:}
 For given $\bu=(u_1,u_2,\ldots,u_n), \bv=(v_1,v_2,\ldots,v_n) \in \mathbb{DH}^n$, with $u_i = u_{i,\bst}+u_{i,\mathcal{I}}\epsilon,v_i = v_{i,\bst}+v_{i,\mathcal{I}}\epsilon$. Denote $\langle \bu,\bv \rangle$ as $ (\bu^*\bv+\bv^*\bu)/2 $. Thus we have $$\langle \bu,\bv \rangle = \sum_{i=1}^n  \mathrm{sc}(u_{i,\bst}v_{i,\bst}^*)+\sum_{i=1}^n \bigl(\mathrm{sc}(u_{i,\bst}v_{i,\mathcal{I}}^*)+\mathrm{sc}(v_{i,\bst}u_{i,\mathcal{I}}^*)\bigr)\epsilon. $$
Let $\bz:=\Pi(\bx+\Pi(\bx)+\by)$. From the definition of $\Pi(\cdot)$, taking $\Pi(\bx)$ as a feasible point, we have
\begin{equation}\label{lem-2-1}
\|\bx+\Pi(\bx)+\by-\bz\|_2^2 \le \|\bx+\Pi(\bx)+\by-\Pi(\bx)\|_2^2 .
\end{equation}
Using $\|\bu-\bv\|_2^2=\|\bu\|_2^2+\|\bv\|_2^2-2\langle \bu,\bv\rangle$ for any $\bu,\bv$ and
$\|\bz\|_2^2=\|\Pi(\bx)\|_2^2=n$ (since $\bz,\Pi(\bx)\in\mathrm{UDQ}^n$), inequality~\eqref{lem-2-1} simplifies to
\begin{equation}\label{lem-2-2}
\langle \by,\bz-\Pi(\bx)\rangle \ge \langle \bx+\Pi(\bx),\Pi(\bx)-\bz\rangle.
\end{equation}

On the other hand, since $\Pi(\bx)$ is the projection of $\bx$ onto $\mathrm{UDQ}^n$ and $\bz\in\mathrm{UDQ}^n$,
\[
\|\bx-\Pi(\bx)\|_2^2 \le \|\bx-\bz\|_2^2,
\]
which implies
\begin{equation}\label{lem-2-3}
\langle \bx,\Pi(\bx)-\bz\rangle \ge 0 .
\end{equation}
Combining \eqref{lem-2-2} and \eqref{lem-2-3}, we obtain
\[
\langle \by,\bz-\Pi(\bx)\rangle
\ge \langle \bx+\Pi(\bx),\Pi(\bx)-\bz\rangle
= \langle \bx,\Pi(\bx)-\bz\rangle + \langle \Pi(\bx),\Pi(\bx)-\bz\rangle
\ge \langle \Pi(\bx),\Pi(\bx)-\bz\rangle.
\]
Moreover,
\[
\langle \Pi(\bx),\Pi(\bx)-\bz\rangle
=\langle \Pi(\bx),\Pi(\bx)\rangle-\langle \Pi(\bx),\bz\rangle
=n-\langle \Pi(\bx),\bz\rangle
=\frac12\|\Pi(\bx)-\bz\|_2^2.
\]
Hence
\begin{equation}\label{lem-2-4-fixed}
\langle \by,\bz-\Pi(\bx)\rangle \ge \frac12\|\Pi(\bx)-\bz\|_2^2 \ge 0.
\end{equation}

By the Cauchy--Schwarz inequality (e.g., Proposition~4.7 in \citet{ling2022minimax}),
\[
\langle \by,\bz-\Pi(\bx)\rangle \le \|\by\|_2\,\|\bz-\Pi(\bx)\|_2.
\]
Together with \eqref{lem-2-4-fixed}, this yields
\[
\frac12\|\bz-\Pi(\bx)\|_2^2 \le \|\by\|_2\,\|\bz-\Pi(\bx)\|_2.
\]
If $\|\bz-\Pi(\bx)\|_2=0$, the claim is trivial. Otherwise,
factoring out the term $\|\bz-\Pi(\bx)\|_2$, we obtain
\[
\|\bz-\Pi(\bx)\|_2\cdot \left( \frac12\|\bz-\Pi(\bx)\|_2 - \|\by\|_2 \right) \le 0.
\]
Since norms are non-negative, the second factor must be non-positive, i.e.,
\[
\|\bz-\Pi(\bx)\|_2 \le 2\|\by\|_2,
\]
which proves the lemma.
\end{proof}

\subsection{Proof of Lemma \ref{lem:general projection 2}}

By the triangle inequality,
\[
\|\Pi(\bm{p}+\bm{q})-\bm{r}\|_2
\le \|\Pi(\bm{p}+\bm{q})-\Pi(\bm{p})\|_2 + \|\Pi(\bm{p})-\bm{r}\|_2.
\]
Applying Lemma~\ref{lem:general projection 1} with $\bx=\bm{p}$ and $\by=\bm{q}-\Pi(\bm{p})$ yields
\[
\|\Pi(\bm{p}+\bm{q})-\Pi(\bm{p})\|_2
=\|\Pi(\bm{p}+\Pi(\bm{p})+(\bm{q}-\Pi(\bm{p})))-\Pi(\bm{p})\|_2
\le 2\|\bm{q}-\Pi(\bm{p})\|_2.
\]
Finally, using the triangle inequality again,
\[
\|\bm{q}-\Pi(\bm{p})\|_2 \le \|\bm{q}-\bm{r}\|_2 + \|\Pi(\bm{p})-\bm{r}\|_2.
\]
Combining the above three displays gives \eqref{eq: projection property 2}.

\subsection{Proof of Lemma \ref{lem:general projection dual}}

We first prove the estimate componentwise. Fix \(i\in\{1,\ldots,n\}\), and
write
$
r_i=q_i+\epsilon p_i$, $h_i=e_i+\epsilon f_i$,
where \(q_i,p_i,e_i,f_i\in\mathbb H\). Since \(r_i\in{\rm UDQ}\), we have
\[
|q_i|=1,
\qquad
{\rm sc}(q_i^*p_i)=0 .
\]
Let
$
\Pi(r_i+h_i)=\tilde q_i+\epsilon\tilde p_i$. The local-chart condition \eqref{eq:projection-standard-bound} gives
\[
|q_i+e_i| \ge \frac12,
\]
in particular $q_i + e_i \neq 0$. By the definition in 
Proposition~\ref{prop:component-wise proj}, we therefore have
\[
\tilde q_i = \frac{q_i+e_i}{|q_i+e_i|}.
\]
For any unit quaternion \(q\), define the tangent
projection
\[
P_q(\xi):=\xi-q\,{\rm sc}(q^*\xi),
\qquad \xi\in\mathbb H.
\]
Then \(P_q\) is the Euclidean orthogonal projection onto
\[
T_q\mathbb S^3=\{\xi\in\mathbb H:{\rm sc}(q^*\xi)=0\}.
\]
In particular,
$
\|P_q\|\le 1$ and
$P_{q_i}(p_i)=p_i$.
Using the explicit form of the dual part of the componentwise normalization,
we have
\[
\tilde p_i
=
\frac{1}{|q_i+e_i|}
P_{\tilde q_i}(p_i+f_i).
\]
Subtracting \(p_i=P_{q_i}(p_i)\), we decompose the difference as
\[
\begin{aligned}
\tilde p_i-p_i
&=
\frac{1}{|q_i+e_i|}P_{\tilde q_i}(f_i)
+
\left(
\frac{1}{|q_i+e_i|}-1
\right)P_{\tilde q_i}(p_i)
+
\bigl(P_{\tilde q_i}-P_{q_i}\bigr)p_i .
\end{aligned}
\]
We bound these three terms separately.

First, since \(|q_i+e_i|\ge1/2\) and \(\|P_{\tilde q_i}\|\le1\),
\[
\left|
\frac{1}{|q_i+e_i|}P_{\tilde q_i}(f_i)
\right|
\le
2|f_i|.
\]
Second, again using \(|q_i+e_i|\ge1/2\), together with the reverse triangle
inequality, we obtain
\[
\left|
\frac{1}{|q_i+e_i|}-1
\right|
=
\frac{\bigl||q_i+e_i|-1\bigr|}{|q_i+e_i|}
\le
2|e_i|,
\]
and therefore
\[
\left|
\left(
\frac{1}{|q_i+e_i|}-1
\right)P_{\tilde q_i}(p_i)
\right|
\le
2|p_i|\,|e_i|.
\]

It remains to control the change of tangent projection from \(P_{q_i}\) to
\(P_{\tilde q_i}\). For any \(\xi\in\mathbb H\), we write
\[
\begin{aligned}
\bigl(P_{\tilde q_i}-P_{q_i}\bigr)\xi
&=
q_i\,{\rm sc}(q_i^*\xi)
-
\tilde q_i\,{\rm sc}(\tilde q_i^*\xi)  =
(q_i-\tilde q_i)\,{\rm sc}(q_i^*\xi)
+
\tilde q_i\,{\rm sc}\bigl((q_i-\tilde q_i)^*\xi\bigr).
\end{aligned}
\]
Using
\[
|{\rm sc}(q_i^*\xi)|\le |\xi|,
\qquad
|{\rm sc}((q_i-\tilde q_i)^*\xi)|
\le
|q_i-\tilde q_i|\,|\xi|,
\qquad
|\tilde q_i|=1,
\]
we obtain
$
|
(P_{\tilde q_i}-P_{q_i})\xi
|
\le
2|\tilde q_i-q_i|\,|\xi|,
$
and then
\[
\|P_{\tilde q_i}-P_{q_i}\|
\le
2|\tilde q_i-q_i|
\le
4|e_i|.
\]
Consequently,
\[
\left|
\bigl(P_{\tilde q_i}-P_{q_i}\bigr)p_i
\right|
\le
4|p_i|\,|e_i|.
\]
Combining the three estimates above gives the following elementwise bound for
the dual part:
\[
\left|
\bigl(\Pi(r_i+h_i)-r_i\bigr)\ii
\right|
=
|\tilde p_i-p_i|
\le
2|f_i|+6|p_i|\,|e_i|.
\]

We now pass from the elementwise estimates to the vectorized estimates. Since
\(\Pi\) acts componentwise on \(\bx \in \mathbb{DH}^n\) when \(x_{i,\bst} \neq 0\), we have
\[
\Pi(\bm{r}+\bm{h})_i=\Pi(r_i+h_i).
\]

For the dual part, we use the elementwise dual estimate and the triangle
inequality in \(\ell_2\). This yields
\[
\begin{aligned}
\left\|
\bigl(\Pi(\br+\bh)-\br\bigr)\ii
\right\|_2
&\le
2\left(\sum_{i=1}^n |(h_i)\ii|^2\right)^{1/2}
+
6\left(\sum_{i=1}^n |(r_i)\ii|^2 |(h_i)_{\rm st}|^2\right)^{1/2}  \\
&\le
2\|\bh\ii\|_2
+
6\max_{1\le i\le n}|(r_i)\ii|\,\|\bh_{\rm st}\|_2 .
\end{aligned}
\]
This proves \eqref{eq:projection-dual-stability}.

\subsection{Proof of Lemma \ref{lem: geometric property}}

Let $s_k:=\hat{\bx}^*\bx^k\in\mathbb{DH}$ and define $p_k:=z_k^* s_k$.
Since $|z_k|=1$, we have
\[
\big|\hat{\bx}^*\bx^k-n z_k\big|
=\big|z_k^*(\hat{\bx}^*\bx^k-n z_k)\big|
=|p_k-n|.
\]
On the other hand,
\[
\operatorname{d}(\bx^k,\hat{\bx})^2
=\|\bx^k-\hat{\bx}z_k\|_2^2
=2n-(p_k+p_k^*),
\]
where we used $\|\bx^k\|_2^2=\|\hat{\bx}z_k\|_2^2=n$ and the definition of the inner product.

Therefore it suffices to show that $p_k$ is Hermitian and satisfies $0\preceq p_k\preceq n+0\epsilon$
(in the total order on dual numbers). In that case,
\[
\frac12\operatorname{d}(\bx^k,\hat{\bx})^2
= n-\frac{p_k+p_k^*}{2}
= n-p_k
= |n-p_k|
=|p_k-n|,
\]
which yields the desired identity.

It remains to justify that $p_k$ is a dual number (i.e., $p_k=p_k^*$) and $0\preceq p_k\preceq n+0\epsilon$.
Write $s_k=a_k+b_k\epsilon$ and $z_k^*=c_k+d_k\epsilon$ with $a_k,b_k,c_k,d_k\in\mathbb{H}$.
The constraint $z_k\in\mathrm{UDQ}$ implies $c_k^*c_k=1$ and $d_kc_k^*+c_kd_k^*=0$.
Moreover, $z_k$ is chosen to minimize $\|\bx^k-\hat{\bx}z\|_2^2$, which is equivalent to maximizing
the dual number $p+p^*$ over $z\in\mathrm{UDQ}$.

\emph{Case 1: $a_k\neq 0$.}
Maximizing the standard (rotation) part yields
\[
c_k=\frac{a_k^*}{|a_k|},
\qquad\text{so that}\qquad
\operatorname{st}(c_ka_k)=|a_k|\in\mathbb{R}_{\ge 0}.
\]
With this choice, the UDQ constraint $d_kc_k^*+c_kd_k^*=0$ is equivalent to
\[
d_ka_k+a_k^*d_k^*=0,
\]
i.e., $d_ka_k$ can be any pure quaternion. Moreover,
\[
p_k=(c_k+d_k\epsilon)(a_k+b_k\epsilon)
=|a_k|+\left(\frac{a_k^*b_k}{|a_k|}+d_ka_k\right)\epsilon.
\]
We choose $d_k$ so that the $\epsilon$-coefficient is real, namely,
\[
d_ka_k=-\mathrm{im}\!\left(\frac{a_k^*b_k}{|a_k|}\right),
\]
which is feasible since the right-hand side is a pure quaternion (with $\operatorname{im}(\cdot)$ denoting the imaginary vector part).
With this choice,
the $\epsilon$-coefficient reduces to
$\mathrm{sc}\!\left(\frac{a_k^*b_k}{|a_k|}\right)\in\mathbb{R}$, hence
$p_k\in\mathbb{D}=\mathbb{R}+\epsilon\mathbb{R}$ and in particular $p_k=p_k^*$.
Furthermore, since $|z_k|=1$ and by Cauchy--Schwarz,
\[
|p_k|=|z_k^*s_k|\le |s_k|=|\hat{\bx}^*\bx^k|
\le \|\hat{\bx}\|_2\,\|\bx^k\|_2=n,
\]
and $\operatorname{st}(p_k)\ge 0$ by construction. Therefore
$0\preceq p_k\preceq n+0\epsilon$ under the standard order on dual numbers.

\emph{Case 2: $a_k=0$.}
Then $s_k=b_k\epsilon$. If $b_k=0$, then $p_k=0$ for every
$z_k\in{\rm UDQ}$, and
\[
\frac12\operatorname{d}(\bx^k,\hat{\bx})^2=n=|p_k-n|.
\]
Assume now that $b_k\neq0$. Write $z_k^*=c_k+d_k\epsilon$. Then
\[
p_k=z_k^*s_k=c_kb_k\epsilon,
\qquad
p_k+p_k^*
=
2\,{\rm sc}(c_kb_k)\epsilon .
\]
Since the standard part of the objective is zero for every unit $c_k$,
we break ties lexicographically by maximizing ${\rm sc}(c_kb_k)$. This is
achieved by choosing
$
c_k=\frac{b_k^*}{|b_k|}$
and  $d_k=0$. Hence
$
p_k=|b_k|\epsilon\in\mathbb D$,
so $p_k=p_k^*$ and $0\preceq p_k\preceq n+0\epsilon$. Moreover,
\[
\frac12\operatorname{d}(\bx^k,\hat{\bx})^2
=
n-\frac{p_k+p_k^*}{2}
=
n-|b_k|\epsilon
=
|p_k-n|.
\]
The proof is complete.

\subsection{Proof of Lemma \ref{lem:one-step-aligned-dual}}

By the scale invariance of the componentwise normalization map \(\Pi\),
\[
\bm{x}^{k+1}
=
\Pi(\bm{C}\bm{x}^k)
=
\Pi\left(\frac1n\bm{C}\bm{x}^k\right).
\]
Since \(\bm{C}-\bm{\Delta}=\hat{\bm{x}}\hat{\bm{x}}^*\), we write
\[
\frac1n\bm{C}\bm{x}^k
=
\bm{r}^k+\bm{h}^k,
\]
where
\[
\bm{h}^k
:=
\frac1n\bigl((\bm{C}-\bm{\Delta})\bm{x}^k-n\bm{r}^k\bigr)
+
\frac1n\bm{\Delta}(\bm{x}^k-\bm{r}^k)
+
\frac1n\bm{\Delta}\bm{r}^k .
\]
By Assumption~\eqref{eq:assumption nondegenerate}, the normalization input satisfies
\[
\min_i
\left|
\left(
\bm{r}_i^k+\bm{h}_i^k
\right)_{\rm st}
\right|
=
\min_i
\left|
\left(
\frac1n\bm{C}\bm{x}^k
\right)_{i,{\rm st}}
\right|
\ge
\frac12 .
\]
Therefore, applying Lemma~\ref{lem:general projection dual} with
\(\bm{r}=\bm{r}^k\) and \(\bm{h}=\bm{h}^k\), and using Assumption~\eqref{eq:assumption bound translation}, we get
\[
\left\|
\bigl(\bm{x}^{k+1}-\bm{r}^k\bigr)_{\mathcal{I}}
\right\|_2
\le
2\|(\bm{h}^k)_{\mathcal{I}}\|_2
+
6B\|(\bm{h}^k)_{\rm st}\|_2,
\]
and
\[
\left\|
\bigl(\bm{x}^{k+1}-\bm{r}^k\bigr)_{\rm st}
\right\|_2
\le
2\|(\bm{h}^k)_{\rm st}\|_2 .
\]
Then the stable gauge-switching condition \eqref{eq:gauge-switching-stability} gives
\[
\begin{aligned}
\delta_{\mathcal{I}}^{k+1}
=
\left\|
\bigl(\bm{x}^{k+1}-\hat{\bm{x}}z_{k+1}\bigr)_{\mathcal{I}}
\right\|_2
&\le
\left\|
\bigl(\bm{x}^{k+1}-\hat{\bm{x}}z_k\bigr)_{\mathcal{I}}
\right\|_2
+
\gamma
\left\|
\bigl(\bm{x}^{k+1}-\hat{\bm{x}}z_k\bigr)_{\rm st}
\right\|_2
\\
&\le
2\|(\bm{h}^k)_{\mathcal{I}}\|_2
+
(6B+2\gamma)\|(\bm{h}^k)_{\rm st}\|_2 .
\end{aligned}
\]
It remains to bound the standard and dual components of \(\bm{h}^k\). Define the scalar dual number
$
\eta^k:=
z_k^*\hat{\bm{x}}^*\bm{x}^k-n$.
Then
\[
(\bm{C}-\bm{\Delta})\bm{x}^k-n\bm{r}^k
=
\hat{\bm{x}}\hat{\bm{x}}^*\bm{x}^k-n\hat{\bm{x}}z_k
=
\bm{r}^k\eta^k .
\]
By Lemma \ref{lem: geometric property}, we have
\[
|\eta^k|
=
\frac12 \operatorname{d}(\bm{x}^k,\hat{\bm{x}})^2 .
\]
Since this is an equality of dual numbers, its standard part gives
\[
|(\eta^k)_{\rm st}|
=
\frac12 \operatorname{d}\st(\bm{x}^k,\hat{\bm{x}})^2 .
\]
Its infinitesimal component satisfies
\[
|(\eta^k)_{\mathcal{I}}|
\le
\operatorname{d}\st(\bm{x}^k,\hat{\bm{x}})
\,|\operatorname{d}_{\mathcal{I}}(\bm{x}^k,\hat{\bm{x}})|.
\]
Moreover, for any \(\bm{u}=\bm{u}_{\rm st}+\epsilon\bm{u}_{\mathcal{I}}\),
the infinitesimal coefficient of the dual-valued norm satisfies
$
|(\|\bm{u}\|_2)_{\mathcal{I}}|
\le
\|\bm{u}_{\mathcal{I}}\|_2$.
Applying this to
$
\bm{u}=\bm{x}^k-\bm{r}^k
$
gives
$
|\operatorname{d}_{\mathcal{I}}(\bm{x}^k,\hat{\bm{x}})|
\le
\delta_{\mathcal{I}}^k$.
Hence
\begin{equation}\label{eq:deltaI_st_I}
|(\eta^k)_{\mathcal{I}}|
\le
\operatorname{d}\st(\bm{x}^k,\hat{\bm{x}})
\delta_{\mathcal{I}}^k .
\end{equation}

Now, we first bound \(\|(\bm{h}^k)_{\rm st}\|_2\). From the definition of
\(\bm{h}^k\),
\[
\begin{aligned}
\|(\bm{h}^k)_{\rm st}\|_2
&\le
\frac1n
\left\|
\bigl((\bm{C}-\bm{\Delta})\bm{x}^k-n\bm{r}^k\bigr)_{\rm st}
\right\|_2+
\frac1n
\left\|
\bigl(\bm{\Delta}(\bm{x}^k-\bm{r}^k)\bigr)_{\rm st}
\right\|_2
+
\frac1n
\left\|
(\bm{\Delta}\bm{r}^k)_{\rm st}
\right\|_2 .
\end{aligned}
\]
For the first term, since
$
(\bm{C}-\bm{\Delta})\bm{x}^k-n\bm{r}^k
=
\bm{r}^k\eta^k$,
we have
\[
\left\|
\bigl((\bm{C}-\bm{\Delta})\bm{x}^k-n\bm{r}^k\bigr)_{\rm st}
\right\|_2
=
\|(\bm{r}^k)_{\rm st}\|_2 |(\eta^k)_{\rm st}|
=
\frac{\sqrt n}{2}
\operatorname{d}\st(\bm{x}^k,\hat{\bm{x}})^2 .
\]
For the second term,
\[
\left\|
\bigl(\bm{\Delta}(\bm{x}^k-\bm{r}^k)\bigr)_{\rm st}
\right\|_2
\le
\|\bm{\Delta}\|_{{\rm op},{\rm st}}
\operatorname{d}\st(\bm{x}^k,\hat{\bm{x}}).
\]
For the third term, using \(\bm{r}^k=\hat{\bm{x}}z_k\) and the fact that
right multiplication by the unit quaternion \((z_k)_{\rm st}\) preserves the
Euclidean norm,
\[
\left\|
(\bm{\Delta}\bm{r}^k)_{\rm st}
\right\|_2
=
\left\|
(\bm{\Delta}\hat{\bm{x}})_{\rm st}
\right\|_2 .
\]
Therefore,
\[
\|(\bm{h}^k)_{\rm st}\|_2
\le
\frac{1}{2\sqrt n}
\operatorname{d}\st(\bm{x}^k,\hat{\bm{x}})^2
+
\frac1n\|\bm{\Delta}\|_{{\rm op},{\rm st}}
\operatorname{d}\st(\bm{x}^k,\hat{\bm{x}})
+
\frac1n
\left\|
(\bm{\Delta}\hat{\bm{x}})_{\rm st}
\right\|_2 .
\]

Next we bound \(\|(\bm{h}^k)_{\mathcal{I}}\|_2\). Again using
$
(\bm{C}-\bm{\Delta})\bm{x}^k-n\bm{r}^k
=
\bm{r}^k\eta^k$,
we obtain
\[
\begin{aligned}
\left\|
\bigl((\bm{C}-\bm{\Delta})\bm{x}^k-n\bm{r}^k\bigr)_{\mathcal{I}}
\right\|_2
&=
\|(\bm{r}^k\eta^k)_{\mathcal{I}}\|_2\le
\|(\bm{r}^k)_{\rm st}\|_2 |(\eta^k)_{\mathcal{I}}|
+
\|(\bm{r}^k)_{\mathcal{I}}\|_2 |(\eta^k)_{\rm st}| .
\end{aligned}
\]
Since
\begin{equation}\label{eq:rk_bd}
\|(\bm{r}^k)_{\rm st}\|_2=\sqrt n,
\qquad
\|(\bm{r}^k)_{\mathcal{I}}\|_2\le B\sqrt n,
\end{equation}
this together with \eqref{eq:deltaI_st_I} gives
\[
\left\|
\bigl((\bm{C}-\bm{\Delta})\bm{x}^k-n\bm{r}^k\bigr)_{\mathcal{I}}
\right\|_2
\le
\sqrt n\,
\operatorname{d}\st(\bm{x}^k,\hat{\bm{x}})
\delta_{\mathcal{I}}^k
+
\frac{B\sqrt n}{2}
\operatorname{d}\st(\bm{x}^k,\hat{\bm{x}})^2 .
\]
For the perturbation term,
\[
\begin{aligned}
\left\|
\bigl(\bm{\Delta}(\bm{x}^k-\bm{r}^k)\bigr)_{\mathcal{I}}
\right\|_2
&\le
\|\bm{\Delta}\|_{{\rm op},{\rm st}}
\left\|
(\bm{x}^k-\bm{r}^k)_{\mathcal{I}}
\right\|_2+
\|\bm{\Delta}\|_{{\rm op},\mathcal{I}}
\left\|
(\bm{x}^k-\bm{r}^k)_{\rm st}
\right\|_2
\\
&=
\|\bm{\Delta}\|_{{\rm op},{\rm st}}
\delta_{\mathcal{I}}^k
+
\|\bm{\Delta}\|_{{\rm op},\mathcal{I}}
\operatorname{d}\st(\bm{x}^k,\hat{\bm{x}}).
\end{aligned}
\]
Finally, from \eqref{eq:rk_bd}
we get
\[
\left\|
(\bm{\Delta}\bm{r}^k)_{\mathcal{I}}
\right\|_2
\le
\sqrt n
\left(
\|\bm{\Delta}\|_{{\rm op},\mathcal{I}}
+
B\|\bm{\Delta}\|_{{\rm op},{\rm st}}
\right).
\]
Combining the three estimates yields
\[
\begin{aligned}
\|(\bm{h}^k)_{\mathcal{I}}\|_2
&\le
\frac{1}{\sqrt n}
\operatorname{d}\st(\bm{x}^k,\hat{\bm{x}})
\delta_{\mathcal{I}}^k
+
\frac{B}{2\sqrt n}
\operatorname{d}\st(\bm{x}^k,\hat{\bm{x}})^2
+
\frac1n
\|\bm{\Delta}\|_{{\rm op},{\rm st}}
\delta_{\mathcal{I}}^k
\\
&\quad+
\frac1n
\|\bm{\Delta}\|_{{\rm op},\mathcal{I}}
\operatorname{d}\st(\bm{x}^k,\hat{\bm{x}})
+
\frac1{\sqrt n}
\left(
\|\bm{\Delta}\|_{{\rm op},\mathcal{I}}
+
B\|\bm{\Delta}\|_{{\rm op},{\rm st}}
\right).
\end{aligned}
\]

Substituting the bounds for
\(\|(\bm{h}^k)_{\rm st}\|_2\) and
\(\|(\bm{h}^k)_{\mathcal{I}}\|_2\) into
$
\delta_{\mathcal{I}}^{k+1}
\le
2\|(\bm{h}^k)_{\mathcal{I}}\|_2
+
(6B+2\gamma)\|(\bm{h}^k)_{\rm st}\|_2
$
gives
\[
\begin{aligned}
\delta_{\mathcal{I}}^{k+1}
&\le
\left(
\frac{2}{\sqrt n}\operatorname{d}\st(\bm{x}^k,\hat{\bm{x}})
+
\frac{2}{n}\|\bm{\Delta}\|_{{\rm op},{\rm st}}
\right)
\delta_{\mathcal{I}}^k
\\
&\quad+
\left(
\frac{2}{n}\|\bm{\Delta}\|_{{\rm op},\mathcal{I}}
+
\frac{6B+2\gamma}{n}\|\bm{\Delta}\|_{{\rm op},{\rm st}}
+
\frac{4B+\gamma}{\sqrt n}\operatorname{d}\st(\bm{x}^k,\hat{\bm{x}})
\right)
\operatorname{d}\st(\bm{x}^k,\hat{\bm{x}})
\\
&\quad+
\frac{2}{\sqrt n}
\left(
\|\bm{\Delta}\|_{{\rm op},\mathcal{I}}
+
B\|\bm{\Delta}\|_{{\rm op},{\rm st}}
\right)
+
\frac{6B+2\gamma}{n}
\left\|
(\bm{\Delta}\hat{\bm{x}})_{\rm st}
\right\|_2 .
\end{aligned}
\]
This is exactly \eqref{eq:one-step-aligned-dual}.

\subsection{Proof of Corollary \ref{thm:complexity}}

Denote the sequence generated by Algorithm~\ref{alg:init} as $\{\bw^{k}\}_{k\geq0}$. Write $\bw^0$ as $\bw^0 = \sum^{n}_{i=1} \bv_j\alpha_j$, where $\bv_1,\ldots, \bv_n$ are orthonormal eigenvectors of $C$ corresponding to $\lambda_1, \ldots, \lambda_n$ respectively. Suppose $\lambda_{1,\mathrm{st}}>0$ and $|\alpha_{1,\mathrm{st}}|\neq 0$. Since Algorithm~\ref{alg:init} is a (normalized) power iteration,
we can write
\[
\bw^k=\sum_{j=1}^n \bv_j\,t_j^k,
\qquad
t_j^k:=
\lambda_j^k\alpha_j\left(\sqrt{\sum_{\ell=1}^n |\lambda_\ell^k\alpha_\ell|^2}\right)^{-1}.
\]
As $k\to\infty$, using $|\lambda_{1,\st}|>|\lambda_{2,\st}|$ and standard dual-number expansion,
\begin{equation*}
\begin{aligned}
        &\sqrt{\sum_{j=1}^n |\lambda_j^k\alpha_j|^2}-|\lambda_1|^{k}|\alpha_1|\\
        & = \left(\sqrt{\sum_{j=1}^n |\lambda_j^k\alpha_j|^2}-|\lambda_1|^{k}|\alpha_1|\right)
       \left(\sqrt{\sum_{j=1}^n |\lambda_j^k\alpha_j|^2}+|\lambda_1|^{k}|\alpha_1|\right)
       \left(\sqrt{\sum_{j=1}^n |\lambda_j^k\alpha_j|^2}+|\lambda_1|^{k}|\alpha_1|\right)^{-1}
       \\
    &= \sum_{j=2}^n |\lambda_j^k\alpha_j|^2\left(\sqrt{\sum_{j=1}^n |\lambda_j^k\alpha_j|^2}+|\lambda_1|^{k}|\alpha_1|\right)^{-1}\\
    &= \sum_{j=2}^n |\lambda_{j,\bst}|^{2k}(1+2k\lambda_{j,\bst}^{-1}\lambda_{j,\bii}\epsilon)|\alpha_j|^2 A^{-1}\\
    &=O_D\left( \left( \frac{{\lambda_{2,\bst}}^2}{\lambda_{1,\bst}} \right)^k \right),
\end{aligned}
\end{equation*} 
where\[
A:= \Big(\sum_{j=1}^n  |\lambda_{j,\bst}|^{2k}(1+2k\lambda_{j,\bst}^{-1}\lambda_{j,\bii}\epsilon)|\alpha_j|^2\Big)^{1/2}+|\lambda_{1,\bst}|^{k}(1+k\lambda_{1,\bst}^{-1}\lambda_{1,\bii}\epsilon)|\alpha_1|.
\]
This implies
\[
t_j^k
=
\lambda_j^k\alpha_j
\Big[|\lambda_1|^k|\alpha_1|+O_D\bigl((|\lambda_{2,\bst}|^2/|\lambda_{1,\bst}|)^k\bigr)\Big]^{-1}.
\]
For $j=1$, since $\lambda_1>0$ we have $\lambda_1^k/|\lambda_1|^k=1$, then
\begin{equation*}
\begin{aligned}
t_1^k-\lambda_1^k\alpha_1\left(|\lambda_1|^k|\alpha_1|\right)^{-1}
&=
\lambda_1^k\alpha_1
\Big[|\lambda_1|^k|\alpha_1| + O_D\bigl((|\lambda_{2,\bst}|^2/|\lambda_{1,\bst}|)^k\bigr)\Big]^{-1}
-\lambda_1^k\alpha_1\left(|\lambda_1|^k|\alpha_1|\right)^{-1}\\
&=
-\lambda_1^k\alpha_1\left(|\lambda_1|^k|\alpha_1|\right)^{-1}
\cdot
O_D\bigl((|\lambda_{2,\bst}|^2/|\lambda_{1,\bst}|)^k\bigr)
\Big[\lambda_1|^k|\alpha_1|+O_D\bigl((|\lambda_{2,\bst}|^2/|\lambda_{1,\bst}|)^k\bigr)\Big]^{-1}\\
&=
-\alpha_1|\alpha_1|^{-1}
\cdot
O_D\bigl((|\lambda_{2,\bst}|^2/|\lambda_{1,\bst}|)^k\bigr)
\Big[|\lambda_1|^k|\alpha_1|+O_D\bigl((|\lambda_{2,\bst}|^2/|\lambda_{1,\bst}|)^k\bigr)\Big]^{-1}.
\end{aligned}
\end{equation*}

Thus, we have

\begin{equation*}
\begin{aligned}
\bw^k &= \bv_1\alpha_1|\alpha_1|^{-1}\Bigg\{1-O_D\bigl((|\lambda_{2,\bst}|^2/|\lambda_{1,\bst}|)^k\bigr)\Big[|\lambda_1|^k|\alpha_1|+O_D\bigl((|\lambda_{2,\bst}|^2/|\lambda_{1,\bst}|)^k\bigr)\Big]^{-1}\Bigg\}\\
&\quad +\sum_{j=2}^n\bv_j\lambda_j^k\alpha_j\Big(|\lambda_1|^k|\alpha_1|+O_D\bigl((|\lambda_{2,\bst}|^2/|\lambda_{1,\bst}|)^k\bigr)\Big)^{-1}.
\end{aligned}
\end{equation*}

Using the distance definition and the feasible choice $z=\alpha_1|\alpha_1|^{-1}\in \mathrm{UDQ}$,

\begin{equation}\label{eq:bound dist1}
    \begin{aligned}
        \operatorname{d}(\bw^k,\bv_1) & = \min_{z\in\mathrm{UDQ}}\Vert \bw^k-\bv_1z\Vert_2
        \leq \left\|\bw^k-\bv_1\alpha_1{|\alpha_1|}^{-1}\right\|_2\\
        &\leq \|\bv_1\|_2O_D\bigl((|\lambda_{2,\bst}|^2/|\lambda_{1,\bst}|)^k\bigr)\Big[|\lambda_1|^k|\alpha_1|+O_D\bigl((|\lambda_{2,\bst}|^2/|\lambda_{1,\bst}|)^k\bigr)\Big]^{-1}\\
        &\quad +\sum_{j=2}^n\|\bv_j\|_2|\lambda_j|^k|\alpha_j|\Big[|\lambda_1|^k|\alpha_1| + O_D\bigl((|\lambda_{2,\bst}|^2/|\lambda_{1,\bst}|)^k\bigr)\Big]^{-1}\\
    & = \Big[\sum_{j=2}^n|\lambda_j|^k|\alpha_j|+ O_D\bigl((|\lambda_{2,\bst}|^2/|\lambda_{1,\bst}|)^k\bigr)\Big]\Big[|\lambda_1|^k|\alpha_1| + O_D\bigl((|\lambda_{2,\bst}|^2/|\lambda_{1,\bst}|)^k\bigr)\Big]^{-1}.
\end{aligned}
\end{equation}

We use $\Pi(\sqrt{n}\bw^k)$ as an initializer for Algorithm~\ref{alg:GPM}. As in the proof of
Theorem~\ref{estimation perf}, it suffices to enforce
\begin{equation}\label{eq:bound dist const}
\operatorname{d}\st(\bw^k,\bv_1)\le \frac{1}{70}
\end{equation}
to guarantee that $\Pi(\sqrt{n}\bw^k)$ enters the basin of attraction with
$$\operatorname{d}\st(\Pi(\sqrt{n}\bw^k),\hat{\bx})\leq 2\operatorname{d}\st(\sqrt{n}\bw^k,\sqrt{n}\bv_1)+\operatorname{d}\st(\sqrt{n}\bv_1,\hat{\bx})\leq \frac{\sqrt{n}}{25}.$$
By the dual quaternion calculation rules, for all sufficiently large $k$, there exists $M_{\st}>0$ depending on
$\{\alpha_j,\lambda_j\}_{j=1}^n$, such that for all sufficiently large $k$
\begin{equation}\label{eq:Re value}
\operatorname{d}\st(\bw^k,\bv_1)
\le
\frac{|\lambda_{2,\bst}|^k|\alpha_{2,\bst}|+M_{\bst}|\lambda_{2,\bst}|^k}
{|\lambda_{1,\bst}|^k|\alpha_{1,\bst}|+M_{\bst}|\lambda_{2,\bst}|^k}.
\end{equation}




Let \(r:=|\lambda_{1,\st}/\lambda_{2,\st}|>1\). Solving
\eqref{eq:Re value} under
\eqref{eq:bound dist const} yields
$$K_{\text{init}}\geq \log_r\left({\frac{70|\alpha_{2,\bst}|+69M\st}{|\alpha_{1,\bst}|}}\right).$$

With $K_{\mathrm{init}}$ iterations, the initial point $\bx^0$ satisfies the basin of attraction conditions:
\[
\operatorname{d}\st(\bx^0,\hat{\bx}) \le \frac{\sqrt{n}}{25}.
\]
Substituting these bounds into the estimation error recursive formulas derived in Theorem~\ref{estimation perf} (specifically Eq.~\eqref{eq:standard-error-bound} directly yields the claimed error bounds for $\bx^k$. This completes the proof.

\section{Remarks on Assumptions in Theorem \ref{estimation perf}}

Condition~\eqref{eq:assumption bound translation} is a bounded aligned translation
assumption. Indeed, under the unit-dual-quaternion representation
\[
r_i^k=
\left(1+\frac{\epsilon}{2}\hat t_i^k\right)\hat q_i^k,
\]
we have
\[
|(r_i^k)_{\mathcal{I}}|=\frac12\|\hat t_i^k\|_2 .
\]
Then \eqref{eq:assumption bound translation} is equivalent to
\[
\max_{k,i}\|\hat t_i^k\|_2\le 2B.
\]
Thus, \(B\) controls the radius of the ground truth translations after the
global alignment \(z_k\).

Condition~\eqref{eq:assumption nondegenerate} is a local
nondegeneracy condition for the normalization map. The dual part of the
normalization \(\mathcal{N}(\by)\) contains a factor \(|\by\st|^{-1}\). Hence the
normalization is uniformly Lipschitz in its dual component only when
\(|\by\st|\) is bounded away from zero. In the DQGPM step, the normalization
input is \(\frac{1}{n}\bC\bx^k\). In the noiseless case and at the aligned ground truth,
namely when \(\bC=\hat \bx\hat \bx^*\) and \(\bx^k=\hat {\bx}z_k\), we have
\[
\frac1n\bC\bx^k
=
\frac1n\hat {\bx}\hat{\bx}^*\hat {\bx}z_k
=
\hat{\bx}z_k
=
\bx^k,
\]
whose standard component has unit norm entrywise. Therefore the threshold
\(\frac{1}{2}\) simply requires the iterates to remain in a neighborhood where the
normalization map is nonsingular.

Condition~\eqref{eq:gauge-switching-stability} controls the effect of changing
the lexicographic aligner from \(z_k\) to \(z_{k+1}\). Since the aligner is
chosen according to the lexicographic dual-number distance, a change of
alignment may transfer standard mismatch into the dual
component. The constant \(\gamma\) quantifies this leakage. It does not need
to be small for the contraction argument; it only appears in the additive
forcing terms of the dual-component recursion.

\section{Additional Experiments on Heavy-tailed Perturbations and Scalability}
We further test robustness under heavy-tailed perturbations by replacing the Gaussian model above with a Gaussian-mixture noise model on the observed edges. Specifically, with probability $1-\eta$, an observed relative motion is corrupted by the base perturbation level $(\sigma_t,\sigma_r)=(0.1,10^\circ)$, while with probability $\eta$, it is corrupted by a larger perturbation $(\sigma_t,\sigma_r)=(0.4,40^\circ)$. $\mathrm{error}_r$ and $\mathrm{error}_t$ are reported in Figure~\ref{fig:heavy_tail_ratio} with varying outlier proportion $\eta$ under $p\in\{0.05,0.3\}$. We also fix $\eta=0.2$ and vary the outlier perturbation level by changing $\sigma_r$, with $\sigma_t=0.01\sigma_r$ for each level, as shown in Figure~\ref{fig:heavy_tail_sigma}. In all settings, DQGPM remains the most accurate method among DQGPM, SPEC, and EIG. These results indicate that DQGPM is robust to heavy-tailed perturbations; handling completely arbitrary gross outliers would require an additional robust reweighting or trimming layer. Finally, to examine scalability, we fix $p=0.1$ and the base noise level $(\sigma_t,\sigma_r)=(0.1,10^\circ)$, and vary the problem size $n$. The CPU times in Figure~\ref{fig:scaling_cpu} show that DQGPM scales favorably and remains substantially faster than SPEC, while being competitive with EIG.

\hfill\\
\begin{figure}[h]
\captionsetup[subfigure]{font=scriptsize,labelfont=rm} 
    \centering
    \begin{subfigure}[b]{0.49\textwidth}
        \centering
                \includegraphics[width=0.8\linewidth, height=0.525\textwidth]{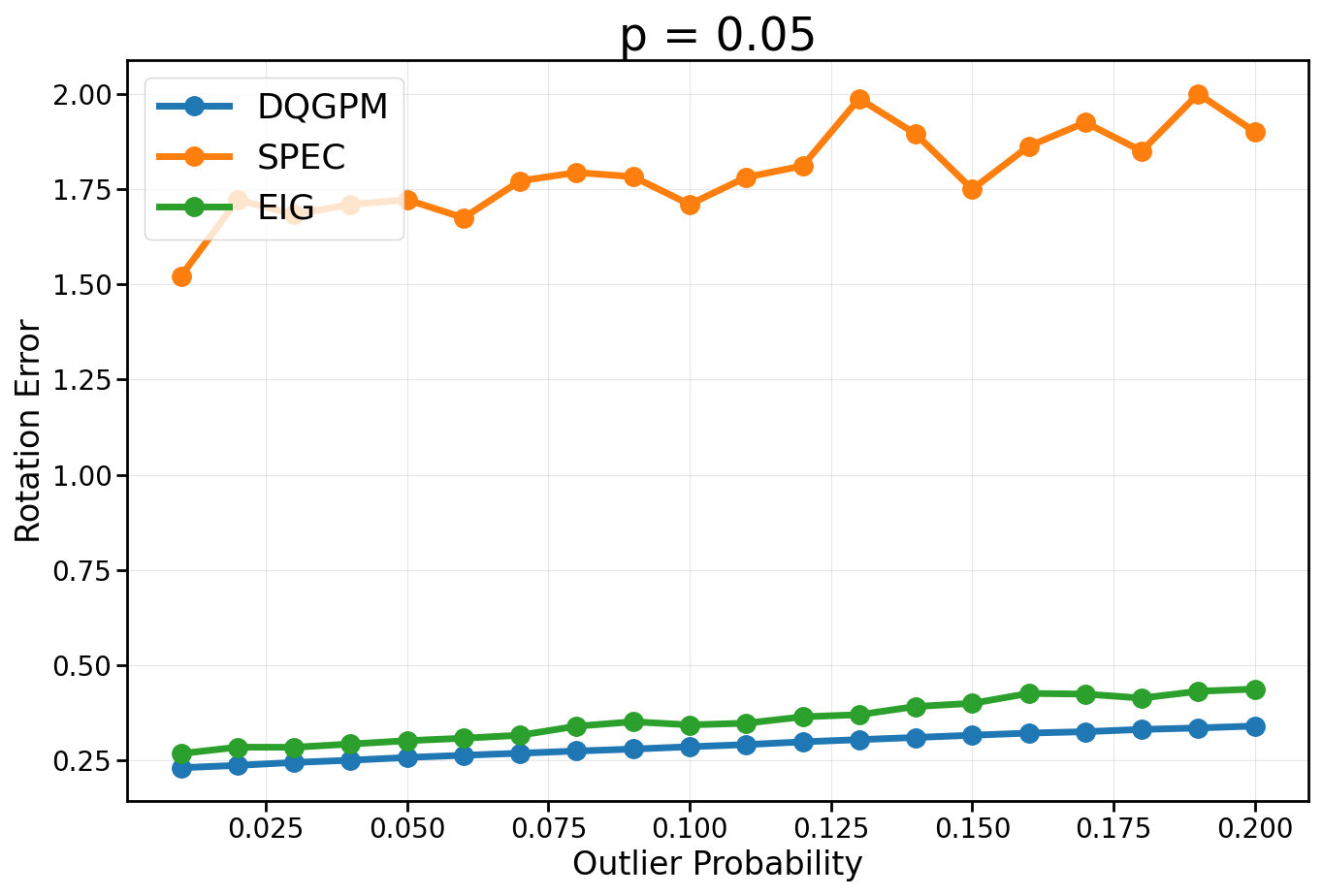}
                \caption{Rotation error ($p=0.05$)}
        \label{fig:heavytail_outlierp_p005_rerror_trim10all}
    \end{subfigure}
    \hspace{-1em}
    \begin{subfigure}[b]{0.49\textwidth}
        \centering
         \includegraphics[width=0.8\linewidth, height=0.525\textwidth]{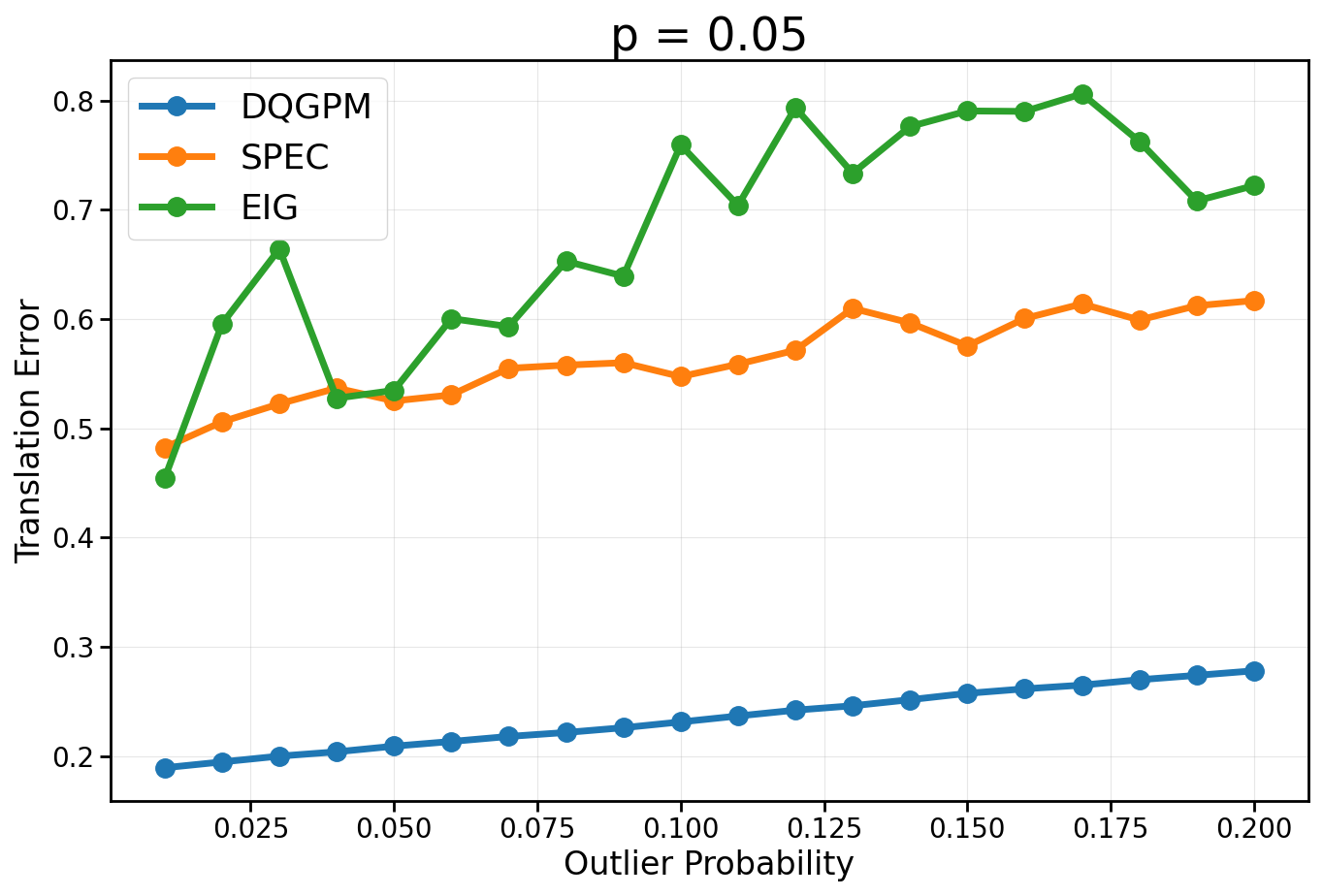}
        \caption{Translation error ($p=0.05$)}
        \label{fig:heavytail_outlierp_p005_terror_trim10all}
    \end{subfigure}

    \vspace{0.5em}

    \begin{subfigure}[b]{0.49\textwidth}
        \centering
        \includegraphics[width=0.8\linewidth, height=0.525\textwidth]{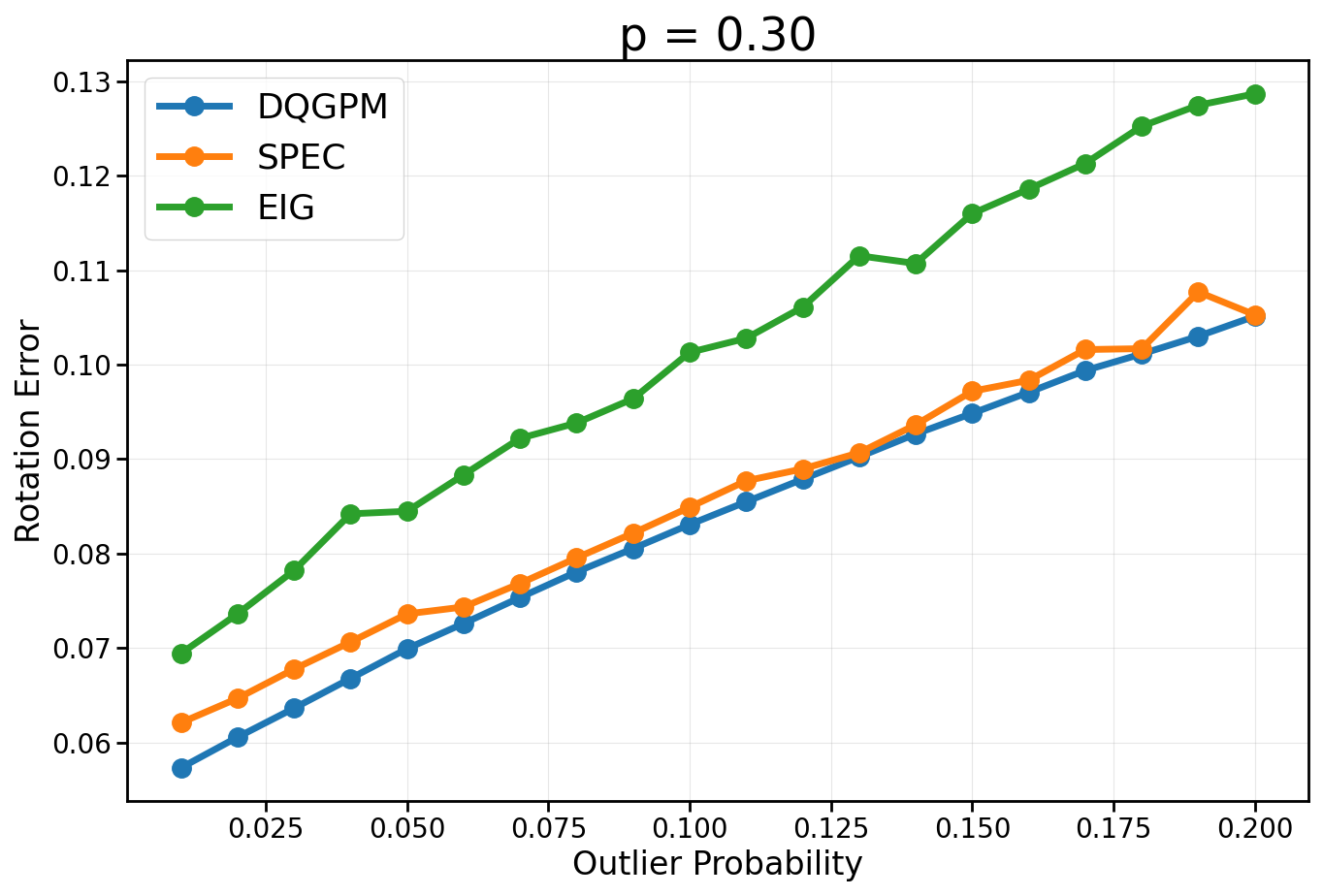}
                \caption{Rotation error ($p=0.30$)}
        \label{fig:heavytail_outlierp_p030_rerror_trim10all}
    \end{subfigure}
    \hspace{-1em}
    \begin{subfigure}[b]{0.49\textwidth}
        \centering
        \includegraphics[width=0.8\linewidth,height=0.525\textwidth]{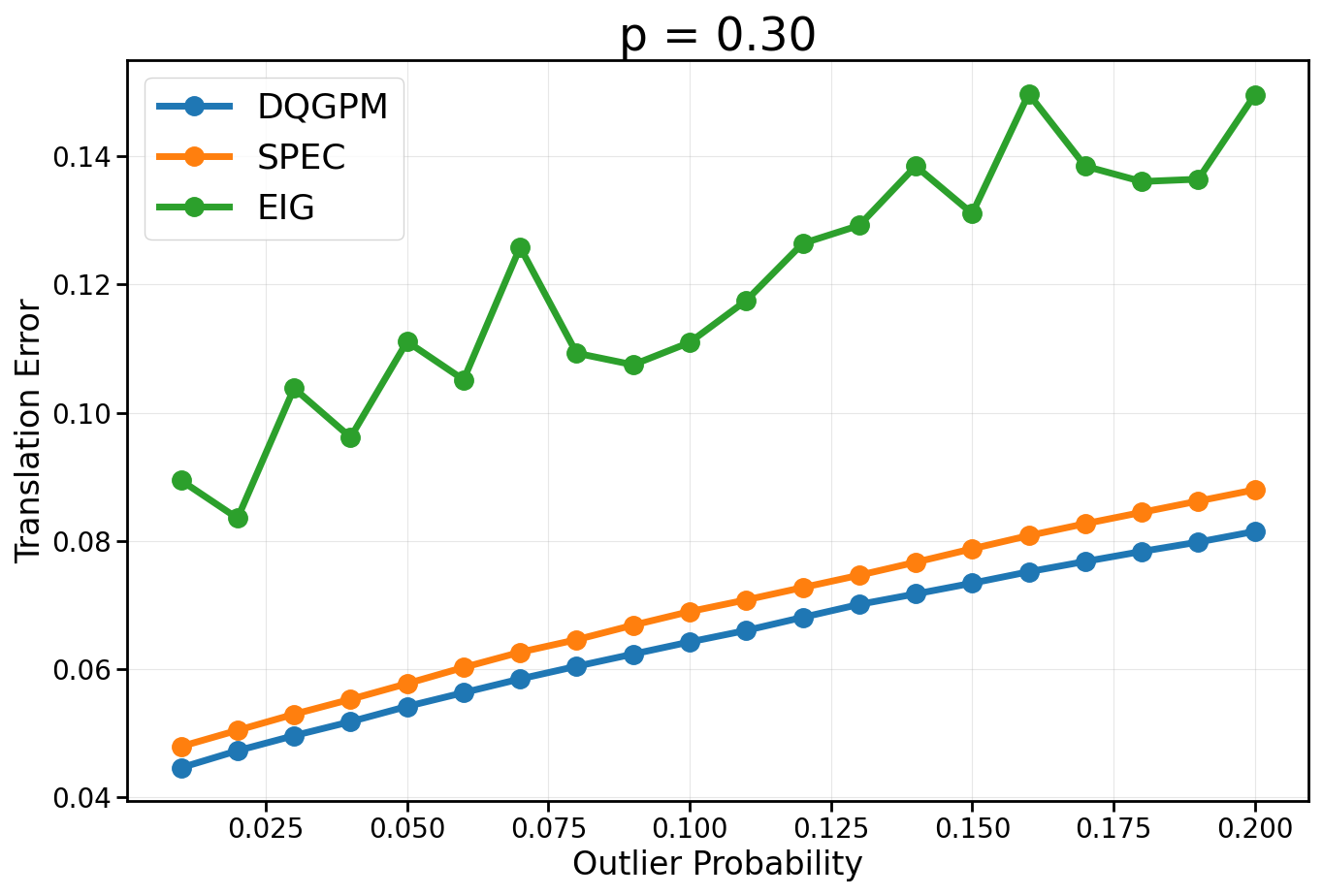}
        \caption{Translation error ($p=0.30$)}
        \label{fig:heavytail_outlierp_p030_terror_trim10all}
    \end{subfigure}

\caption{Rotation and translation errors vs. outlier proportion under Gaussian-mixture heavy-tail noise, with base perturbation level \((\sigma_t,\sigma_r)=(0.1,10^\circ)\) and outlier perturbation level \((\sigma_t,\sigma_r)=(0.4,40^\circ)\).}
    \label{fig:heavy_tail_ratio}
\end{figure}

\begin{figure}[!ht]
\captionsetup[subfigure]{font=scriptsize,labelfont=rm} 
    \centering
    \begin{subfigure}[b]{0.49\textwidth}
        \centering
        \includegraphics[width=0.8\linewidth,height=0.525\textwidth]{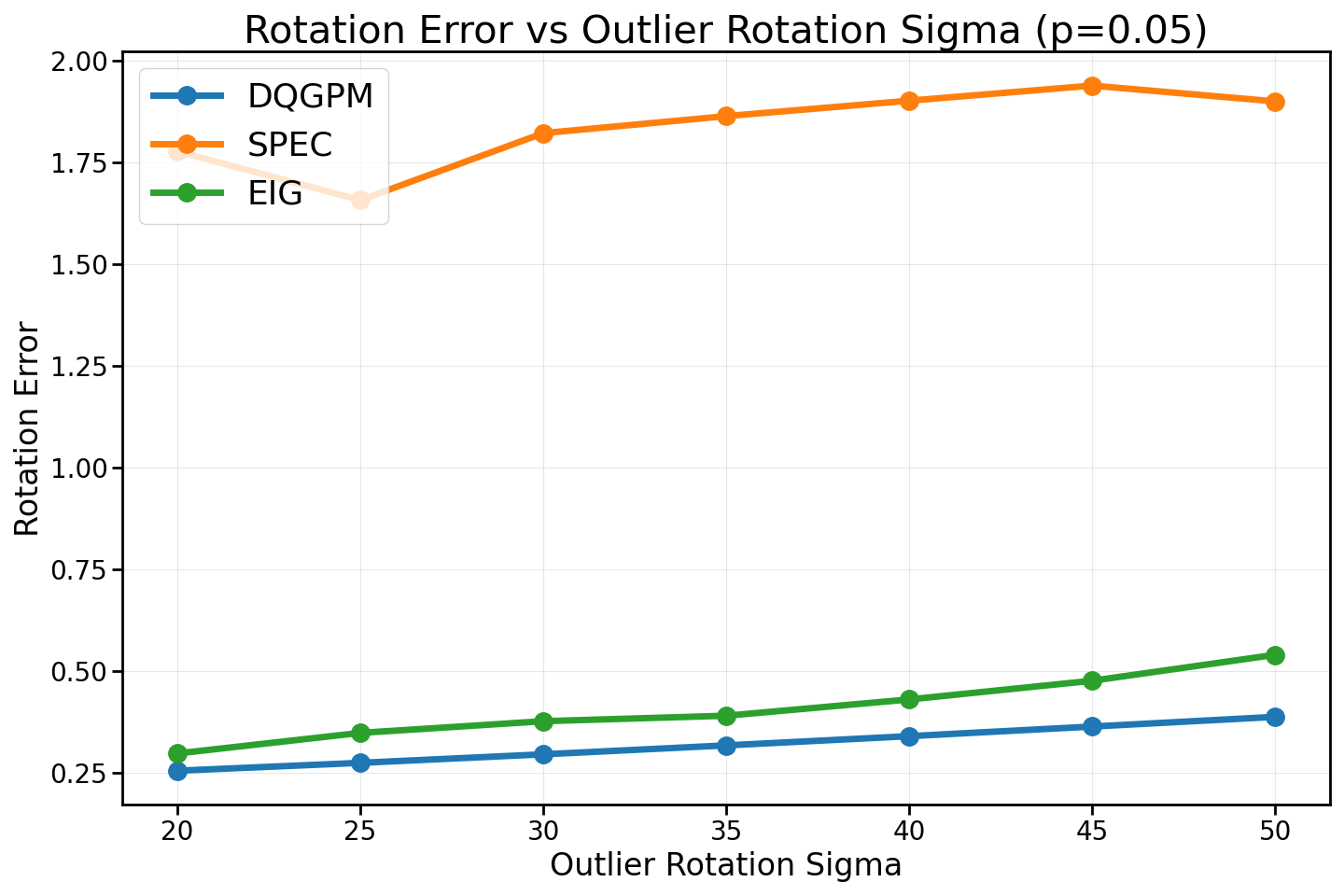}
                \caption{Rotation error ($p=0.05$)}
        \label{fig:heavytail_sigma_p005_rerror_trim10}
    \end{subfigure}
    \hspace{-1em}
    \begin{subfigure}[b]{0.49\textwidth}
        \centering
        \includegraphics[width=0.8\linewidth,height=0.525\textwidth]{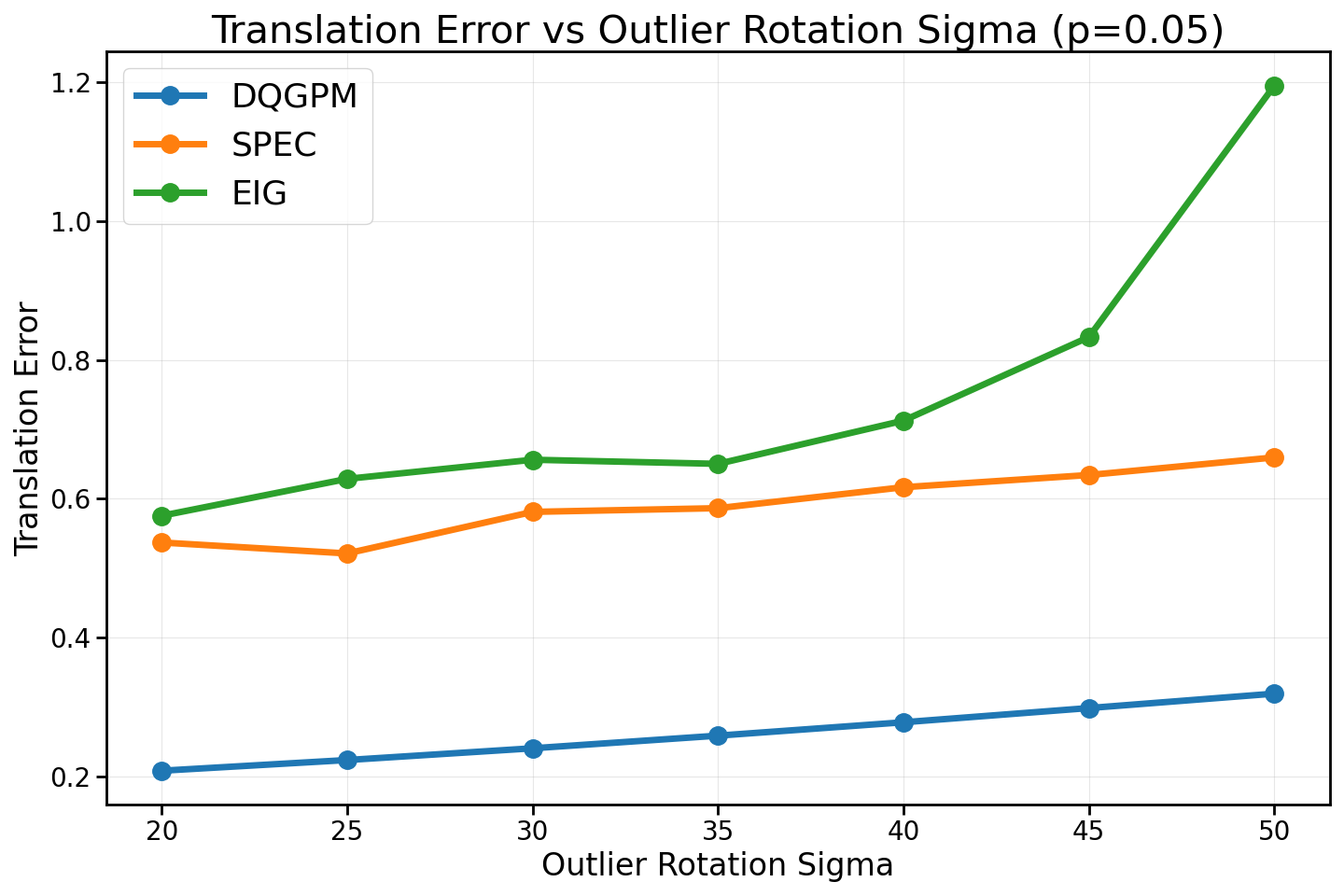}
        \caption{Translation error ($p=0.05$)}
        \label{fig:heavytail_sigma_p005_terror_trim10}
    \end{subfigure}

    \vspace{0.5em}

    \begin{subfigure}[b]{0.49\textwidth}
        \centering
        \includegraphics[width=0.8\linewidth,height=0.525\textwidth]{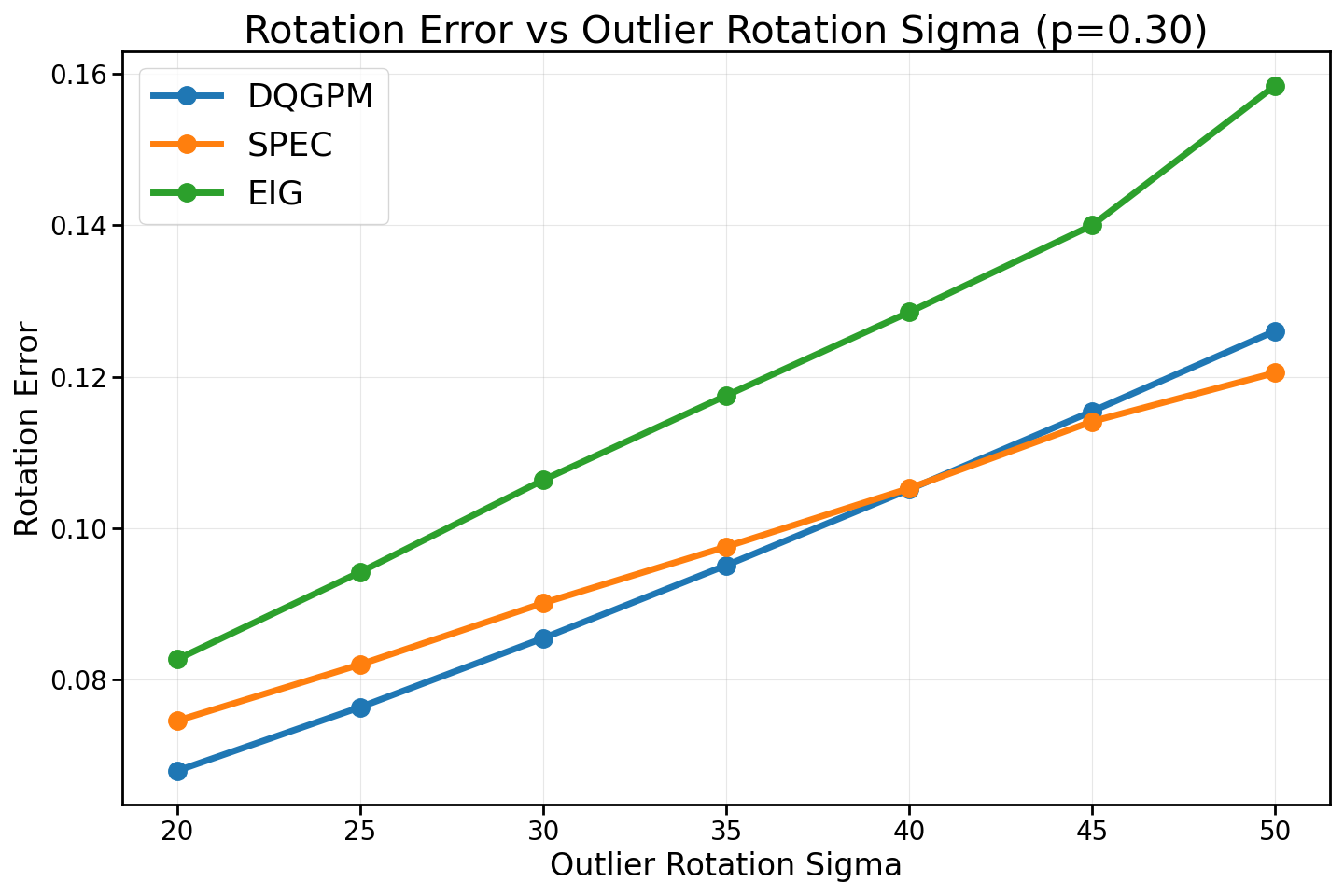}
                \caption{Rotation error ($p=0.30$)}
        \label{fig:heavytail_sigma_p030_rerror_trim10}
    \end{subfigure}
    \hspace{-1em}
    \begin{subfigure}[b]{0.49\textwidth}
        \centering
        \includegraphics[width=0.8\linewidth,height=0.525\textwidth]{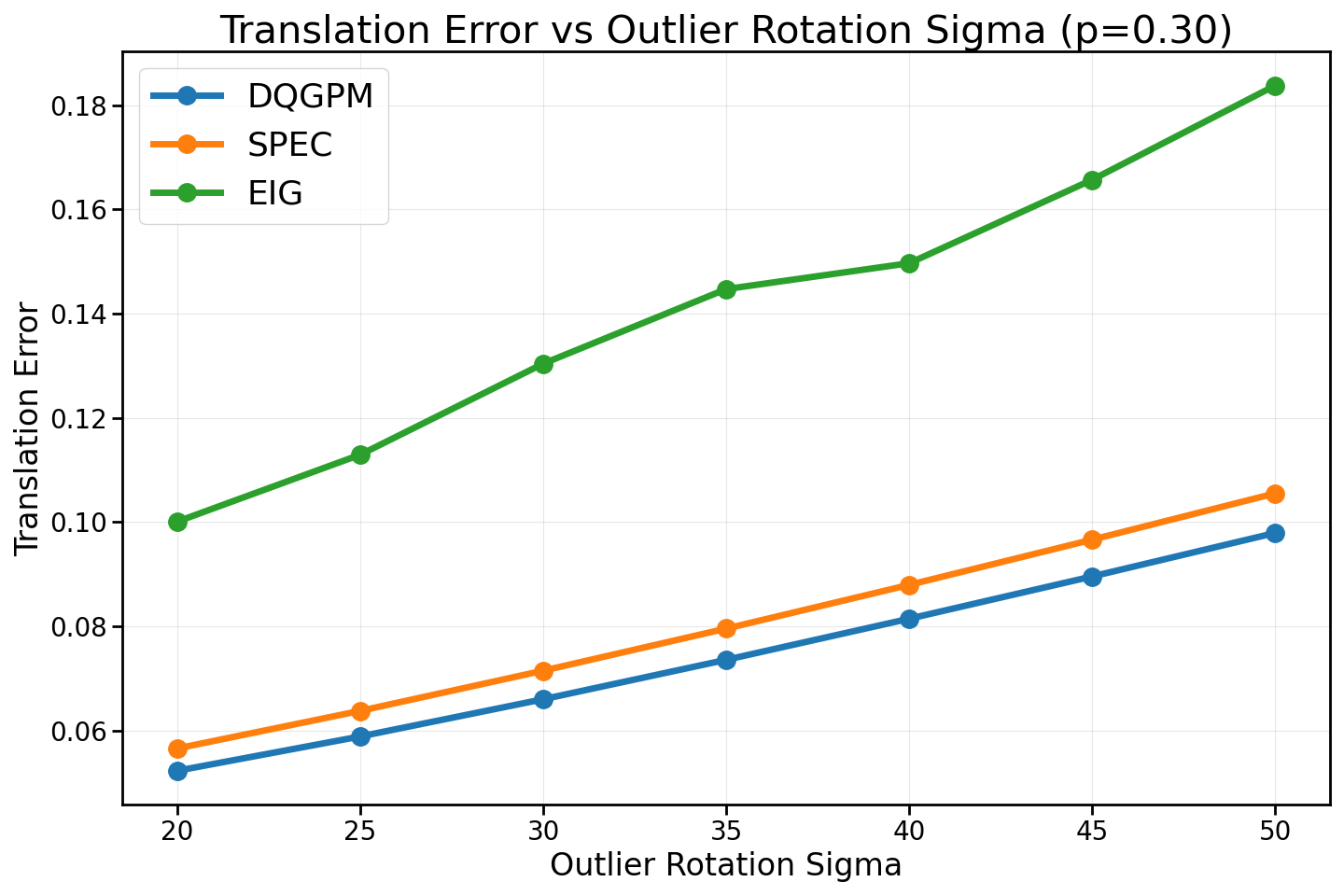}
        \caption{Translation error ($p=0.30$)}
        \label{fig:heavytail_sigma_p030_terror_trim10all}
    \end{subfigure}

    \caption{Rotation and translation errors vs. outlier \(\sigma_r\) under Gaussian-mixture heavy-tail noise, with \(\sigma_t = 0.01\,\sigma_r\) for each outlier perturbation, base perturbation level \((\sigma_t,\sigma_r)=(0.1,10^\circ)\), and outlier proportion \(0.2\).}
    \label{fig:heavy_tail_sigma}
\end{figure}

\begin{figure}[t]
    \centering
    \includegraphics[width=0.7\linewidth]{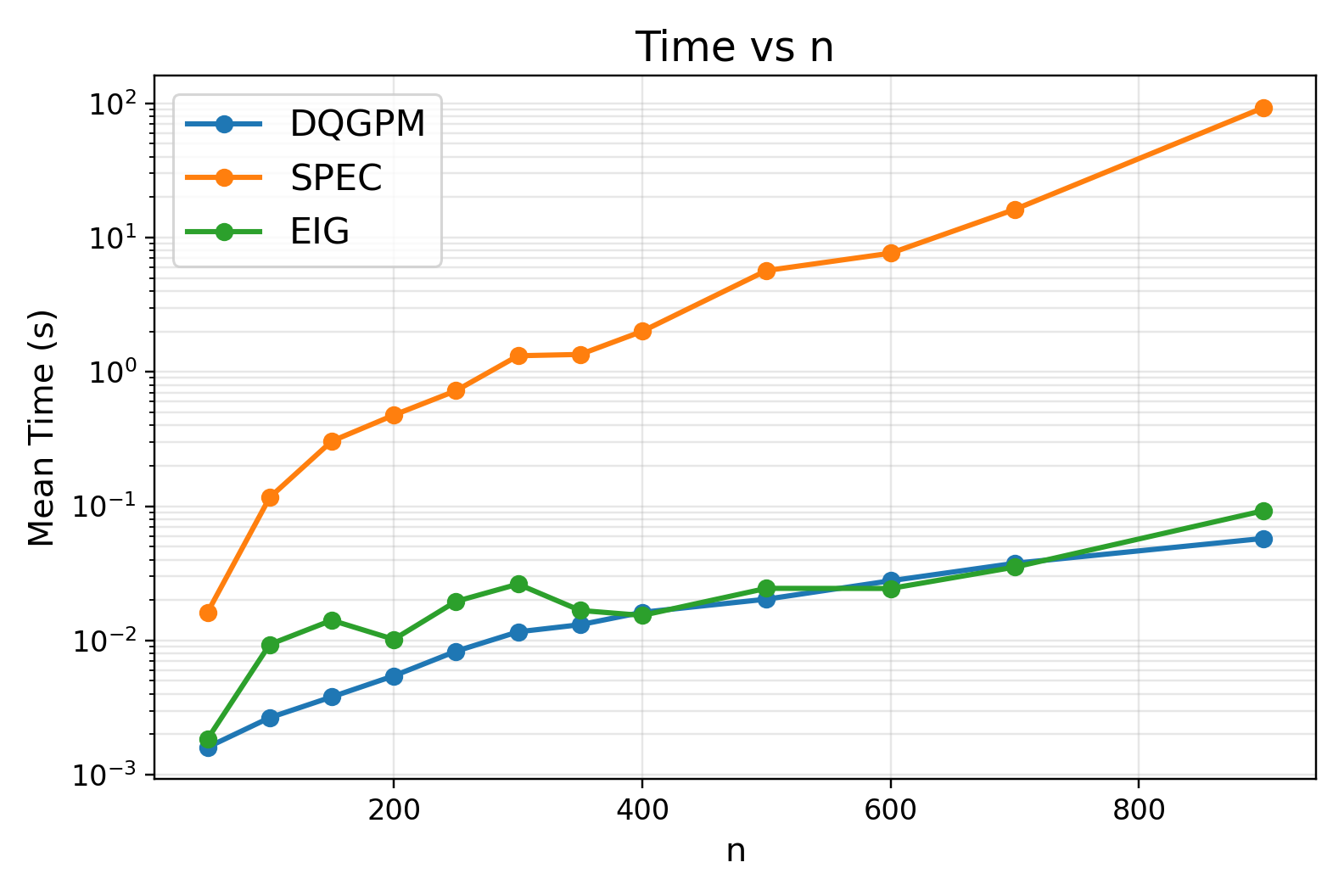}
    \caption{CPU time versus problem scale $n$ under observation rate $p=0.1$ and base noise level $(\sigma_t,\sigma_r)=(0.1,10^\circ)$.}
    \label{fig:scaling_cpu}
\end{figure}

\end{document}